\documentclass[10pt]{amsart}
\usepackage{amssymb}

\setlength{\parindent}{0pt}
\addtolength{\parskip}{7pt}

\newtheorem{proposition}{Proposition}[section]
\newtheorem{lemma}[proposition]{Lemma}
\newtheorem{corollary}[proposition]{Corollary}
\newtheorem{theorem}[proposition]{Theorem}
\newtheorem{remark}[proposition]{Remark}

\theoremstyle{definition}

\newcommand{\selabel}[1]{\label{se:#1}}
\newcommand{\seref}[1]{Section~\ref{se:#1}}

\def\<{\leqslant}
\def\>{\geqslant}
\def\a{\alpha}
\def\b{\beta}
\def\d{\delta}
\def\g{\gamma}

\def\e{\varepsilon}

\def\s{\sigma}

\def\ti{\times}
\def\ot{\otimes}
\def\ra{\rightarrow}

\date{}

\begin{document}
\title{The Coalgebra Automorphism Group of Hopf algebra $k_q[x, x^{-1}, y]$}
\author{Hui-Xiang Chen}
\address{School of Mathematical Science, Yangzhou University,
Yangzhou 225002, China}
\email{hxchen@yzu.edu.cn}
\footnote{2010{\it Mathematics Subject Classification}. 16T15, 16W20}{}
\keywords{Automorphism, Hopf algebra, coalgebra, graded coalgebra}
\begin{abstract}
Let $k_q[x, x^{-1}, y]$ be the localization of the quantum plane $k_q[x, y]$ over
a field $k$, where $0\neq q\in k$. Then $k_q[x, x^{-1}, y]$ is a graded Hopf algebra,
which can be regarded as the non-negative part of the quantum enveloping algebra $U_q({\mathfrak sl}_2)$.
Under the assumption that $q$ is not a root of unity,
we investigate the coalgebra automorphism group of $k_q[x, x^{-1}, y]$. We describe the structures of the
graded coalgebra automorphism group and the coalgebra automorphism group of $k_q[x, x^{-1}, y]$,
respectively.
\end{abstract}

\maketitle
\section*{\bf Introduction}\selabel{0}

The automorphism group of a mathematical object is, in roughly speaking, the symmetry of the object.
To determine the automorphism group of a mathematical object is always fundamental in classification problem.
There are many papers concerning the automorphism groups of algebras.
Usually, it is very difficult to determine the automorphism group of an algebra. A well studied example
is the automorphism group of an incidence algebra, see \cite{Bac, Coel, Schar, Stan}. Andruskiewitsch and Dumas studied
the algebra automorphisms and Hopf algebra automorphisms of $U^+_q(\mathfrak{g})$,
the positive part of the quantum enveloping algebra of a simple complex finite dimensional Lie algebras
$\mathfrak{g}$ of type $A_2$ and $B_2$ in \cite{AndruDumas}. Li and Yu studied the algebra isomorphisms
and automorphisms of the quantum enveloping algebras $U_q({\mathfrak sl}_2)$ in \cite{LiYu}.
For more works on the algebra automorphisms  and Hopf algebra automorphisms
of $U^+_q({\mathfrak g})$ and $U_q({\mathfrak g})$, the reader
is directed to \cite{AlevCha, ChinMu, Lau1, Lau2, LauLop, Twie}.
On the other hand, quantum polynomial algebras are useful tools to study quantum groups, see
\cite{Arta, BrownGood, Demi, Manin}. Kirkman, Procesi and Small studied the automorphism group of
the quantum polynomial algebra $k_q[x, x^{-1}, y, y^{-1}]$ in \cite{KirPS}. Artamonov studied
the algebra automorphisms of the quantum polynomial algebra $k_q[x, x^{-1}, y]$ in \cite{Arta05}.
However, for the automorphism groups of coalgebras, the known examples are few in literature.
Ye studied the automorphism groups of path coalgebras in \cite{Ye}.
The present work aims to investigate the coalgebra automorphism group
of the Hopf algebra $k_q[x, x^{-1}, y]$, which can be regard as the non-negative part $U_q({\mathfrak sl}_2)^{\>0}$
of the quantum enveloping algebra $U_q({\mathfrak sl}_2)$.

In this paper, we investigate the coalgebra automorphism group of the Hopf algebra $k_q[x, x^{-1}, y]$
over a field $k$, where $0\neq q\in k$ and $q$ is not a root of unity.
In Section 1, we recall some basic definitions and notations, and make some preparations for the rest of the paper.
In Section 2, we first introduce the Hopf algebra $H:=k_q[x, x^{-1}, y]$, which is a graded pointed
Hopf algebra. Then we investigate the coalgebra automorphisms of $H$, and give some properties of the coalgebra
automorphisms of $H$. Let $Aut_c(H)$ be the coalgebra automorphism group of $H$, and $Aut_0(H)$ the subgroup of
$Aut_c(H)$ consisting of all the coalgebra automorphisms $\phi$ satisfying $\phi(1)=1$. For a $\phi\in Aut_c(H)$,
it is shown that $\phi\in Aut_0(H)$ if and only if the restriction of $\phi$ on the coradical $H_0$ of $H$
is the identity. Then we construct a subgroup $\Theta$ of $Aut_c(H)$ and show that $\Theta$ is isomorphic
to the additive group $\mathbb Z$ of all integers. We also show that $Aut_0(H)$ is a normal subgroup of $Aut_c(H)$,
and that $Aut_c(H)$ is the internal semidirect product of $Aut_0(H)$ and $\Theta$.
Let $Aut^{gr}_c(H)$ be the graded coalgebra automorphism group of $H$ and
$Aut^{gr}_0(H)=Aut^{gr}_c(H)\cap Aut_0(H)$. Then $Aut^{gr}_c(H)$ is the internal semidirect product of $Aut^{gr}_0(H)$
and $\Theta$. We show that $Aut^{gr}_0(H)$ is isomorphic to $(k^{\times})^{\mathbb Z}$, the direct product group of
$\mathbb Z$-copies of the multiplicative group $k^{\times}$ of all nonzero scales in the ground field $k$.
Finally, we show that $Aut^{gr}_c(H)$ is isomorphic to a semidirect product group
$(k^{\times})^{\mathbb Z}\rtimes\mathbb{Z}$.
In Section 3, we investigate the structure of the coalgebra automorphism group $Aut_c(H)$.
We first construct a family of normal subgroups of $Aut_c(H)$:
$$Aut_0(H)\supseteq Aut_*(H)\supseteq Aut_1(H)\supseteq Aut_2(H)\supseteq\cdots.$$
It is shown that $Aut_c(H)$ (resp. $Aut_0(H)$) is the internal semidirect product of $Aut_*(H)$ and
$Aut^{gr}_c(H)$ (resp. $Aut^{gr}_0(H)$), and that $Aut_*(H)/Aut_1(H)\cong Aut_{s-1}(H)/Aut_{s}(H)\cong
k^{\mathbb Z}$, the direct product group of $\mathbb Z$-copies of the additive group $k$, for all $s\>2$.
Then we show that the family of quotient groups $\{Aut_*(H)/Aut_i(H)\}_{i\in I}$ forms an inverse system
of groups with the index set $I$ of all positive integers, and that $Aut_*(H)$ is isomorphic to the inverse limit
$\underleftarrow{\rm lim}(Aut_*(H)/Aut_i(H))$.
Let $G_{\infty}=(k^{\mathbb Z})^I$ be the Cartesian product set of $I$-copies of $k^{\mathbb Z}$.
Using a recursive method, we define a group structure on $G_{\infty}$, and show that
$Aut_*(H)\cong G_{\infty}$. Finally, we show that $Aut_c(H)$ (resp. $Aut_0(H)$) is isomorphic to
a semidirect product group $G_{\infty}\rtimes((k^{\times})^{\mathbb Z}\rtimes\mathbb{Z})$
(resp. $G_{\infty}\rtimes(k^{\times})^{\mathbb Z}$).

\section{\bf Preliminaries}\selabel{1}

Throughout, let $k$ be an arbitrary field. Unless
otherwise stated, all algebras, coalgebras and Hopf algebras are
defined over $k$; linear and $\otimes$ stand for $k$-linear and $\otimes_k$,
respectively. Let $\mathbb Z$ denote the set of all integers,
and $\mathbb N$ denote the set of all non-negative integers.
Let $k^{\ti}$ denote the multiplicative group of all nonzero elements in the field $k$.
For the theory of Hopf algebras and quantum groups, we refer to \cite{Ka, Maj, Mon, Sw}.

Let $C$ be a coalgebra, and $G(C)$ the set of all group-like elements in $C$.
For $g, h\in G(C)$, an element $c\in C$ is called a $(g, h)$-primitive element
if $\Delta(c)=c\ot g+h\ot c$. Let $P_{g, h}(C)$ denote the set of all $(g, h)$-primitives
in $C$. Then $P_{g, h}(C)$ is a subspace of $C$.

%

A vector space with a designated direct sum decomposition $V=\bigoplus_{n=0}^{\infty}V(n)$
of subspaces is a graded vector space. Suppose that  $U=\bigoplus_{n=0}^{\infty}U(n)$ and
$V=\bigoplus_{n=0}^{\infty}V(n)$ are graded vector spaces. Then a linear map
$f: U\ra V$ is a graded map if $f(U(n))\subseteq V(n)$ for
all $n\geqslant 0$.

An algebra $A$ is a graded algebra if $A=\bigoplus_{n=0}^{\infty}A(n)$
is a graded vector space such that $1\in A(0)$ and $A(n)A(m)\subseteq A(n+m)$
for all $n, m\geqslant 0$. A coalgebra $C$ is a graded coalgebra if
$C=\bigoplus_{n=0}^{\infty}C(n)$ is a graded vector space such that
$\varepsilon(C(n))=0$ for all $n>0$ and
$\Delta(C(n))\subseteq\sum_{i=0}^nC(i)\ot C(n-i)$ for all $n\geqslant 0$.

A bialgebra $H$ is a graded bialgebra if  $H=\bigoplus_{n=0}^{\infty}H(n)$
is a graded vector space, which gives $H$ a graded algebra structure and a graded coalgebra
structure. A Hopf algebra $H$ is a graded Hopf algebra if $H=\bigoplus_{n=0}^{\infty}H(n)$
is a graded bialgebra such that the antipode $S$ is a graded map.

Let $C=\bigoplus_{n=0}^{\infty}C(n)$ and $D=\bigoplus_{n=0}^{\infty}D(n)$
be two graded coalgebras. A coalgebra map (or isomorphism) $f: C\ra D$ is called a graded coalgebra
map (or isomorphism) if $f$ is a graded map.

Let $0\not=q\in k$. For any integer $n>0$, set
$(n)_q=1+q+\cdots +q^{n-1}$.
Observe that $(n)_q=n$ when $q=1$, and
$$
(n)_q=\frac{q^n-1}{q-1}
$$
when $q\not= 1$.
Define the $q$-factorial of $n$ by
$(0)!_q=1$ and
$(n)!_q=(n)_q(n-1)_q\cdots (1)_q$ for $n>0$.
Note that $(n)!_q=n!$ when $q=1$, and
$$
(n)!_q=
\frac{(q^n-1)(q^{n-1}-1)\cdots (q-1)}{(q-1)^n}
$$
when $n>0$ and $q\not= 1$.
The  $q$-binomial coefficients
$
\left(\begin{array}{c}
n\\
i\\
\end{array}\right)_q
$
is defined inductively as follows for $0\leqslant i\leqslant n$:
$$
\left(\begin{array}{c}
n\\
0\\
\end{array}\right)_q
=1=
\left(\begin{array}{c}
n\\
n\\
\end{array}\right)_q
\quad\quad
\mbox{ for } n\geqslant 0,$$
$$
\left(\begin{array}{c}
n\\
i\\
\end{array}\right)_q
=
q^i
\left(\begin{array}{c}
n-1\\
i\\
\end{array}\right)_q
+
\left(\begin{array}{c}
n-1\\
i-1\\
\end{array}\right)_q
\quad \quad
\mbox{ for } 0< i< n.$$
It is well-known that
$
\left(\begin{array}{c}
n\\
i\\
\end{array}\right)_q$
is a polynomial in $q$ with integer coefficients and with value at $q=1$
equal to the usual binomial coefficients
$
\left(\begin{array}{c}
n\\
i\\
\end{array}\right)$, and that
$$
\left(\begin{array}{c}
n\\
i\\
\end{array}\right)_q
=\frac{(n)!_q}{(i)!_q(n-i)!_q}
$$
when
$(n-1)!_q\not = 0$ and $0<i<n$
(see \cite[p.74]{Ka}).

Let $k^{\mathbb Z}$ (resp. $(k^{\times})^{\mathbb Z}$) be the Cartesian product set of
$\mathbb Z$-copies of $k$ (resp. $k^{\times}$). Then $(k^{\times})^{\mathbb Z}\subset k^{\mathbb Z}$.
For any $\a=(\a_n)_{n\in\mathbb Z}$, $\b=(\b_n)_{n\in\mathbb Z}\in k^{\mathbb Z}$, define
$\a+\b=(\a_n+\b_n)_{n\in\mathbb Z}$ and $\a\b=(\a_n\b_n)_{n\in\mathbb Z}$, i.e.,
$(\a+\b)_n=\a_n+\b_n$ and $(\a\b)_n=\a_n\b_n$ for all $n\in\mathbb Z$. Then $k^{\mathbb Z}$ is an additive
group as the direct product of $\mathbb Z$-copies of the additive group $k$,
and $(k^{\times})^{\mathbb Z}$ is a multiplicative group as the direct product of $\mathbb Z$-copies
of the multiplicative group $k^{\times}$.

For $\a=(\a_n)_{n\in\mathbb Z}\in k^{\mathbb Z}$ (or $(k^{\times})^{\mathbb Z}$), $a\in k$ (or $k^{\times}$)
and $r\in\mathbb Z$, define $a\a=((a\a)_n)_{n\in\mathbb Z}$ and $\a[r]=(\a[r]_n)_{n\in\mathbb Z}$ by
$(a\a)_n=a\a_n$ and $\a[r]_n=\a_{n+r}$ for all $n\in\mathbb Z$, respectively.

\section{\bf The Graded Coalgebra Automorphisms of Hopf algebra $k_q[x, x^{-1}, y]$}\selabel{2}

Let $0\neq q\in k$. $k_q[x, x^{-1}, y]$ is an algebra over $k$ generated by
$x$, $x^{-1}$ and $y$ subject to the relations
$$xx^{-1}=1,\ x^{-1}x=1,\ yx=qxy.$$
Then $k_q[x, x^{-1}, y]$ is a Hopf algebra with the coalgebra structure and antipode
$S$ given by
$$\begin{array}{lll}
\Delta(x)=x\ot x,& \Delta(x^{-1})=x^{-1}\ot x^{-1},& \Delta(y)=y\ot x+1\ot y,\\
\varepsilon(x)=1,& \varepsilon(x^{-1})=1,& \varepsilon(y)=0,\\
S(x)=x^{-1},& S(x^{-1})=x,& S(y)=-yx^{-1}=-q^{-1}x^{-1}y.\\
\end{array}$$
For the details, the reader is directed to \cite{chen98}. In \cite{Rad03},
the Hopf algebra $k_q[x, x^{-1}, y]$ is denoted by $H_q$.
When $q$ is a primitive $n$-th root of unity for some $n\geqslant 2$,
the ideal $I$ of $k_q[x, x^{-1}, y]$ generated by $x^n-1$ and $y^n$ is a Hopf ideal.
In this case, the quotient Hopf algebra $k_q[x, x^{-1}, y]/I$ is exactly the Taft Hopf algebra,
see \cite{Rad03, Ta}. If $q^2\neq 1$, the $k_{q^{-2}}[x, x^{-1}, y]$ is isomorphic to
$U_q({\mathfrak sl}_2)^{\>0}$, the non-negative part of the quantum enveloping algebra
$U_q({\mathfrak sl}_2)$ as a Hopf algebras, see \cite{HuZhang} and \cite{Zhang-Chen08}.

Throughout the following, assume that $q$ is not a root of unity,
and denote $H_q=k_q[x, x^{-1}, y]$ by $H$ for simplicity. In this case,
$(n)!_q\neq 0$ for all $n\>1$, and so $\left (\begin{array}{c}n\\ i \end{array} \right )_{q}\neq0$
for all $0\<i\<n$.

The following lemma is known, see \cite{chen98, Ka, Rad03}.

\begin{lemma}\label{basic}
The following statements hold for $H$.\\
{\rm (1)} $\{x^ny^m|n\in{\mathbb Z}, m\in{\mathbb N}\}$ is a $k$-basis of $H$.\\
{\rm (2)} $\Delta(x^ny^m)=\sum\limits_{i=0}^m
\left(\begin{array}{c}
m\\
i\\
\end{array}\right)_qx^ny^i\ot x^{n+i}y^{m-i}$, $n\in\mathbb Z$, $m\in\mathbb N$.\\
{\rm (3)} $H$ is a pointed Hopf algebra with $H_0=kG(H)$ and $G(H)=\{x^n|n\in\mathbb Z\}$, an infinite
cyclic group.\\
{\rm (4)} $H=\bigoplus_{n=0}^{\infty}H(n)$ is a graded Hopf algebra, where $H(n)=H_0y^n$
for all $n\geqslant 0$.
\end{lemma}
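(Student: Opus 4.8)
The statement to prove is Lemma~\ref{basic}, which collects four standard facts about $H = k_q[x,x^{-1},y]$. Here is how I would organize the proof.

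\textbf{Part (1).} The plan is to invoke the diamond lemma (Bergman) or a direct normal-form argument. The defining relations $xx^{-1}=x^{-1}x=1$ and $yx=qxy$ let us rewrite any word in $x^{\pm 1},y$ into the form $x^n y^m$ with $n\in\mathbb Z$, $m\in\mathbb N$: push all $y$'s to the right past all $x^{\pm 1}$'s (each swap introducing a power of $q$), and collapse adjacent $x,x^{-1}$ pairs. So these monomials span $H$. For linear independence one can either check the ambiguities of the rewriting system resolve (the only overlaps are $x x^{-1} x$, $x^{-1} x x^{-1}$, and $y x x^{-1}$, all confluent), or realize $H$ concretely: $k_q[x,x^{-1},y]$ is an Ore localization of the quantum plane $k_q[x,y]=k\langle x,y\mid yx=qxy\rangle$ (whose PBW basis $\{x^n y^m: n,m\in\mathbb N\}$ is classical) at the Ore set $\{x^n\}$, giving the stated basis. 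I would cite \cite{chen98} for the details and just indicate the Ore-localization picture.

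\textbf{Part (2).} This is a purely formal computation from the coalgebra structure on the generators. Since $x$ is group-like, $\Delta(x^n)=x^n\ot x^n$ for all $n\in\mathbb Z$. Since $y$ is $(1,x)$-primitive and $x$ is group-like with $\Delta(xy x^{-1})=\dots$, one checks the skew-primitivity interacts with the $q$-twist exactly so that the $q$-binomial theorem applies: by induction on $m$, using the recursion defining $\left(\begin{array}{c}m\\ i\end{array}\right)_q$ and the relation $\Delta(y)\Delta(x)^i = (y\ot x + 1\ot y)(x^i\ot x^i)$ together with $x^i y = q^{-i} y x^i$ inside the tensor factors, one gets $\Delta(y^m)=\sum_{i=0}^m \left(\begin{array}{c}m\\ i\end{array}\right)_q y^i\ot x^i y^{m-i}$, and then multiply on the left by $x^n\ot x^n$. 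I would present the inductive step compactly and cite \cite[p.~74]{Ka} for the $q$-binomial identity used.

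\textbf{Parts (3) and (4).} For (3): group-likes are linearly independent, so by (1) a group-like $\sum c_{n,m} x^n y^m$ must be supported on monomials with $m=0$ (comparing $\Delta$ against $c\ot c$ using (2) and the fact that all $q$-binomials are nonzero since $q$ is not a root of unity — this is where the hypothesis enters), and then $\sum c_n x^n$ group-like forces it to be a single $x^n$; hence $G(H)=\{x^n:n\in\mathbb Z\}\cong\mathbb Z$. That $H$ is pointed with $H_0 = kG(H)$: by (1) and (2), $H$ is spanned by skew-primitive-type elements built from the $x^n$, and a standard argument (the coradical filtration built from $H_0=kG(H)$ via $H_n = H_{n-1}\wedge H_{n-1}$ exhausts $H$, since $H(n)=H_0 y^n$ lands in the $n$-th term) shows $H$ is pointed. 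For (4): one checks the grading $H(n)=H_0 y^n = \bigoplus_{j\in\mathbb Z} k x^j y^n$ is a vector-space decomposition by (1); it is an algebra grading because $yx=qxy$ keeps the total $y$-degree, and $1\in H(0)$; it is a coalgebra grading because formula (2) sends $H(n)$ into $\sum_{i=0}^n H(i)\ot H(n-i)$ and $\varepsilon$ kills $H(n)$ for $n>0$; and $S(y)=-q^{-1}x^{-1}y\in H(1)$, $S(x^{\pm1})\in H(0)$ show $S$ is graded. All of this is routine once (1) and (2) are in hand.

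\textbf{Main obstacle.} None of the steps is genuinely hard; the only point requiring care is the linear-independence half of (1) (confluence of the rewriting / correctness of the Ore localization), and the bookkeeping of $q$-powers in the induction for (2). Since the lemma is explicitly flagged as known with references \cite{chen98, Ka, Rad03}, I would keep the proof brief: give the normal-form/Ore argument for (1), the one-line inductive step for (2), and the short group-like and grading verifications for (3)–(4), deferring all detailed computations to the cited sources.
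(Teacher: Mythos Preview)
Your proposal is correct and in fact goes well beyond the paper, which gives no argument at all for this lemma beyond the sentence ``The following lemma is known, see \cite{chen98, Ka, Rad03}.'' One small correction: the hypothesis that $q$ is not a root of unity is not actually needed for part (3) --- the identification $G(H)=\{x^n\}$ and the pointedness of $H$ hold for any nonzero $q$ (a highest-$y$-degree argument on a putative group-like suffices, without appealing to nonvanishing of $q$-binomials); that hypothesis is invoked only in the later lemmas of the paper where $\binom{n}{i}_q\neq 0$ is genuinely needed.
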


\begin{lemma}\label{auto01}
Let $\phi: H\ra H$ be a coalgebra automorphism. Then there is an integer $r$ such that for any
$n\in\mathbb{Z}$,
$$\phi(x^n)=x^{n+r},\ \phi(x^ny)=\a_n x^{n+r}y+\b_n(x^{n+r+1}-x^{n+r})$$
for some $\a_n\in k^{\times}$ and $\b_n\in k$.
\end{lemma}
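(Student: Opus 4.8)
The plan is to analyze how $\phi$ acts on group-like elements first, and then on the $(g,h)$-primitive elements. Since $\phi$ is a coalgebra automorphism, it permutes $G(H)=\{x^n\mid n\in\mathbb Z\}$, so it induces a group automorphism of the infinite cyclic group $\langle x\rangle$. The only automorphisms of $\mathbb Z$ are $\pm\mathrm{id}$, so $\phi(x)=x^r$ or $\phi(x)=x^{-r}$ for a suitable choice; writing the multiplicative structure additively, $\phi(x^n)=x^{\pm n+r}$ where $r$ is determined by $\phi(1)=x^r$ (note $\phi(1)$ is group-like since $\Delta(1)=1\ot 1$). The first thing I would pin down is that the sign is $+$: this should come from the graded/filtered structure, namely from the interaction with the primitive spaces. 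Concretely, $\phi$ maps $P_{g,h}(H)$ isomorphically onto $P_{\phi(g),\phi(h)}(H)$, and one computes from Lemma~\ref{basic}(2) that $P_{x^{n},x^{n+1}}(H)=kx^n\oplus kx^{n+1}$ if we include the trivial primitives, or more precisely that $x^n y$ is, up to scalar and a trivial primitive, the unique nontrivial $(x^n,x^{n+1})$-primitive. The key structural fact is: $\Delta(x^ny)=x^ny\ot x^{n+1}+x^n\ot x^n y$, so $x^ny$ is an $(x^{n+1},x^n)$-primitive in the convention $\Delta(c)=c\ot g+h\ot c$ with $g=x^{n+1}$, $h=x^n$; the trivial primitives in $P_{g,h}$ are spanned by $g-h=x^{n+1}-x^n$.

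Next I would determine the image of each $x^ny$. Applying $\phi$ to $\Delta(x^ny)=x^ny\ot x^{n+1}+x^n\ot x^ny$ and using that $\phi$ is a coalgebra map gives
\[
\Delta(\phi(x^ny))=\phi(x^ny)\ot x^{n+1\pm0+r\;?}+\cdots,
\]
more carefully $\Delta(\phi(x^ny))=\phi(x^ny)\ot\phi(x^{n+1})+\phi(x^n)\ot\phi(x^ny)$, so $\phi(x^ny)$ lies in $P_{\phi(x^{n+1}),\phi(x^n)}(H)$. If $\phi(x^n)=x^{\sigma n+r}$ with $\sigma=\pm1$, this space is $P_{x^{\sigma(n+1)+r},\,x^{\sigma n+r}}(H)$. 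When $\sigma=-1$ the two group-likes are $x^{-n-1+r}$ and $x^{-n+r}$, and the nontrivial primitive there is (a scalar multiple of) $x^{-n-1+r}y$, which has a different "direction" — but the real point is that $\phi$ must respect the coradical filtration and send the skew-primitive $x^ny$ to something of filtration degree exactly $1$; I would argue that the case $\sigma=-1$ is excluded by looking one step further up the filtration (e.g.\ at $x^ny^2$, whose comultiplication involves $q$-binomials, forcing compatibility that fails when $q$ is not a root of unity unless $\sigma=1$). Granting $\sigma=1$, we get $\phi(x^ny)\in P_{x^{n+1+r},x^{n+r}}(H)=k\,x^{n+r}y\oplus k(x^{n+r+1}-x^{n+r})$, which is exactly the claimed form $\phi(x^ny)=\alpha_n x^{n+r}y+\beta_n(x^{n+r+1}-x^{n+r})$, with $\alpha_n\in k$ and $\beta_n\in k$. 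Finally $\alpha_n\ne 0$ because $\phi$ is injective and $x^{n+r+1}-x^{n+r}=\phi(x^{n+1})-\phi(x^n)=\phi(x^{n+1}-x^n)$ already lies in the image, so if $\alpha_n=0$ then $x^ny$ and a combination of $x^n,x^{n+1}$ would have the same image, contradicting injectivity.

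The main obstacle I anticipate is ruling out the orientation-reversing case $\sigma=-1$ cleanly; the group-automorphism argument alone leaves both signs open, and one genuinely needs the finer coalgebra structure (the $q$-binomial coefficients in $\Delta(x^ny^m)$, nonzero because $q$ is not a root of unity) to break the symmetry — this is presumably where the hypothesis on $q$ enters. A convenient way to organize this: show that the span $H_0\oplus H_0 y = H_0\oplus\big(\bigoplus_n P_{x^{n+1},x^n}(H)\big)$ is the term $H_1$ of the coradical filtration, which $\phi$ must preserve, and then observe that the induced map on $H_1/H_0$ together with the action on $G(H)$ forces $\sigma=1$. Once the form of $\phi$ on $x^n$ and $x^ny$ is established, extracting the precise statement is routine bookkeeping as above.
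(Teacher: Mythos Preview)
There is a genuine gap at the very first step. You assert that a coalgebra automorphism $\phi$ ``induces a group automorphism of the infinite cyclic group $\langle x\rangle$'', but $\phi$ is not assumed to be an algebra map, so its restriction to $G(H)$ is merely a bijection of the \emph{set} $\{x^n\mid n\in\mathbb Z\}$. You therefore cannot write $\phi(x^n)=x^{\sigma n+r}$ with $\sigma=\pm1$; a priori $\phi(x^n)=x^{\theta(n)}$ for an arbitrary permutation $\theta$ of $\mathbb Z$. This is exactly where the paper starts, and the whole point of the argument is to deduce $\theta(n+1)=\theta(n)+1$ from the coalgebra structure.

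The paper does this by expanding $\phi(x^ny)$ in the basis $\{x^sy^m\}$ and comparing coefficients in $\Delta\phi(x^ny)=(\phi\otimes\phi)\Delta(x^ny)$: the nonvanishing of the $q$-binomials kills all terms with $m>1$, and then the surviving degree-$1$ coefficients force simultaneously $s=\theta(n)$ and $s=\theta(n+1)-1$, hence $\theta(n+1)=\theta(n)+1$. No ``$\sigma=-1$'' case ever arises; in particular, the degree-$2$ analysis you propose is unnecessary. Your primitive-space idea can be repaired along the same lines: from $\phi(x^ny)\in P_{x^{\theta(n+1)},\,x^{\theta(n)}}(H)$ and the fact (Lemma~\ref{primi}) that $P_{x^b,x^a}(H)\subset H_0$ unless $b=a+1$, together with $\phi(x^ny)\notin H_0$ (coradical filtration), one gets $\theta(n+1)=\theta(n)+1$ directly. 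Note also that in your discussion of the $\sigma=-1$ case the primitive direction is reversed: $x^ay\in P_{x^{a+1},x^a}(H)$, so there is in fact \emph{no} nontrivial primitive in $P_{x^{a},x^{a+1}}(H)$, which is precisely what rules that case out immediately once you look at the correct space.
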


\begin{proof}
Since $\phi$ is a coalgebra automorphism of $H$ and $G(H)=\{x^n|n\in\mathbb{Z}\}$,
the restriction of $\phi$ on $G(H)$ gives rise to
a bijection from $G(H)$ onto itself. Hence there is a permutation $\theta$ of $\mathbb Z$
such that $\phi(x^n)=x^{\theta(n)}$ for all $n\in\mathbb{Z}$.

Now let $n\in\mathbb Z$ be an arbitrary fixed integer. Assume that
$\phi(x^ny)=\sum\limits_{s\in\mathbb{Z}, m\in\mathbb{N}}\mu_{s,m}x^sy^m$,
where $\mu_{s,m}\in k$ and almost all $\mu_{s,m}=0$. Then
$\Delta\phi(x^ny)=(\phi\ot\phi)\Delta(x^ny)$ since $\phi$ is a coalgebra map.
By Lemma \ref{basic}, we have
$$\begin{array}{rcl}
\Delta\phi(x^ny)&=&\sum\limits_{s\in\mathbb{Z}, m\in\mathbb{N}}\mu_{s,m}\Delta(x^sy^m)\\
&=&\sum\limits_{s\in\mathbb{Z}, m\in\mathbb{N}}\mu_{s,m}\sum\limits_{i=0}^m
\left(\begin{array}{c}
m\\
i\\
\end{array}\right)_qx^sy^i\ot x^{s+i}y^{m-i}\\
\end{array}$$
and
$$\begin{array}{rcl}
(\phi\ot\phi)\Delta(x^ny)&=&(\phi\ot\phi)(x^n\ot x^ny+x^ny\ot x^{n+1})\\
&=&\phi(x^n)\ot\phi(x^ny)+\phi(x^ny)\ot\phi(x^{n+1})\\
&=&\sum\limits_{s\in\mathbb{Z}, m\in\mathbb{N}}\mu_{s,m}x^{\theta(n)}\ot x^sy^m
+\sum\limits_{s\in\mathbb{Z}, m\in\mathbb{N}}\mu_{s,m}x^sy^m\ot x^{\theta(n+1)}.\\
\end{array}$$
It follows that
$$\begin{array}{rl}
&\sum\limits_{s\in\mathbb{Z}, m\in\mathbb{N}}\mu_{s,m}\sum\limits_{i=0}^m
\left(\begin{array}{c}
m\\
i\\
\end{array}\right)_qx^sy^i\ot x^{s+i}y^{m-i}\\
=&\sum\limits_{s\in\mathbb{Z}, m\in\mathbb{N}}\mu_{s,m}x^{\theta(n)}\ot x^sy^m
+\sum\limits_{s\in\mathbb{Z}, m\in\mathbb{N}}\mu_{s,m}x^sy^m\ot x^{\theta(n+1)}.\\
\end{array}$$
When $m>1$, one may choose an integer $i$ with $0<i<m$. Then for any $s\in\mathbb Z$,
by comparing the coefficients of the item $x^sy^i\ot x^{s+i}y^{m-i}$ of the both sides
of the above equation, one gets that
$\mu_{s,m}\left(\begin{array}{c}
m\\
i\\
\end{array}\right)_q=0$, and so $\mu_{s,m}=0$ since
$\left(\begin{array}{c}
m\\
i\\
\end{array}\right)_q\neq 0$.
Thus, $\phi(x^ny)=\sum_{s\in\mathbb{Z}}\sum_{m=0}^1\mu_{s,m}x^sy^m$,
and the above equation becomes the following one
$$\begin{array}{rl}
&\sum\limits_{s\in\mathbb{Z}}\mu_{s,0}x^s\ot x^s
+\sum\limits_{s\in\mathbb{Z}}\mu_{s,1}(x^s\ot x^sy+x^sy\ot x^{s+1})\\
\hspace{2cm}=&\sum\limits_{s\in\mathbb{Z}}(\mu_{s,0}x^{\theta(n)}\ot x^s+\mu_{s,1}x^{\theta(n)}\ot x^sy)\\
&+\sum\limits_{s\in\mathbb{Z}}(\mu_{s,0}x^s\ot x^{\theta(n+1)}
+\mu_{s,1}x^sy\ot x^{\theta(n+1)}).\hspace{2.2cm}(2.1)\\
\end{array}$$
When $s\neq\theta(n)$, by comparing the coefficients of the item
$x^{\theta(n)}\ot x^sy$ of the both sides of Eq.(2.1),
one gets that $\mu_{s, 1}=0$.
When $s\neq\theta(n+1)-1$, by comparing the coefficients of the item
$x^sy\ot x^{\theta(s+1)}$ of the both sides of Eq.(2.1),
one gets that $\mu_{s, 1}=0$. Thus, if $\theta(n+1)-1\neq\theta(n)$,
then $\mu_{s,1}=0$ for all $s\in\mathbb Z$. In this case,
$\phi(x^ny)\in kG(H)=H_0$, but $x^ny\notin H_0$. This contradicts to the hypothesis
that $\phi$ is a coalgebra automorphism of $H$. It follows that
$\theta(n+1)-1=\theta(n)$, i.e., $\theta(n+1)=\theta(n)+1$.
When $s\neq\theta(n)$ and $s\neq\theta(n+1)=\theta(n)+1$, by comparing the coefficients
of the item $x^s\ot x^s$ of the both sides of Eq.(2.1), one gets
that $\mu_{s,0}=0$. Summarizing the above discussion, we have
$$\phi(x^ny)=\mu_{\theta(n),0}x^{\theta(n)}+\mu_{\theta(n)+1,0}x^{\theta(n)+1}
+\mu_{\theta(n),1}x^{\theta(n)}y.$$
Since $\phi$ is a coalgebra map, we have $\varepsilon\phi(x^ny)=\varepsilon(x^ny)=0$,
which implies that $\mu_{\theta(n),0}+\mu_{\theta(n)+1,0}=0$, that is,
$\mu_{\theta(n),0}=-\mu_{\theta(n)+1,0}$.

Let $r=\theta(0)$. Since $\theta(n+1)=\theta(n)+1$ for all $n\in\mathbb Z$,
one knows that $\theta(n)=\theta(0)+n=r+n$ for all $n\in\mathbb Z$.
Therefore, we have
$$\phi(x^n)=x^{n+r},\ \phi(x^ny)=\mu_{n+r,1}x^{n+r}y+\mu_{n+r+1,0}(x^{n+r+1}-x^{n+r})$$
for all $n\in\mathbb Z$. Since $x^ny\notin H_0$, $\phi(x^ny)\notin H_0$, which implies
$\mu_{n+r, 1}\neq 0$.
\end{proof}

\begin{lemma}\label{Zpart}
Let $r\in\mathbb Z$. Define a linear map $\theta_r: H\ra H$ by
$$\theta_r(x^ny^m)=x^{n+r}y^m,\ \ n\in\mathbb{Z},\ m\in\mathbb{N}.$$
Then $\theta_r$ is a coalgebra automorphism of $H$.
\end{lemma}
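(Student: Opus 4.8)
The plan is to verify directly that $\theta_r$ is a coalgebra isomorphism, the three things to check being: well-definedness together with linearity, bijectivity, and compatibility with $\Delta$ and $\varepsilon$. The first is immediate, since by Lemma~\ref{basic}(1) the set $\{x^ny^m\mid n\in\mathbb Z,\ m\in\mathbb N\}$ is a $k$-basis of $H$, and $\theta_r$ is prescribed on this basis; it extends uniquely to a linear endomorphism of $H$. Bijectivity is equally quick: letting $\theta_{-r}$ be the map defined by the same recipe with $r$ replaced by $-r$, one checks on basis elements that $\theta_r\theta_{-r}=\theta_{-r}\theta_r=\mathrm{id}_H$, so $\theta_r$ is invertible with $\theta_r^{-1}=\theta_{-r}$.

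It then remains to show $\theta_r$ is a coalgebra map, i.e. $\varepsilon\circ\theta_r=\varepsilon$ and $\Delta\circ\theta_r=(\theta_r\ot\theta_r)\circ\Delta$, and by linearity it suffices to test both identities on the basis elements $x^ny^m$. For the counit, $\varepsilon(x^ny^m)$ is $1$ if $m=0$ and $0$ otherwise (because $\varepsilon(x)=1$ and $\varepsilon(y)=0$), and the same holds for $\varepsilon(\theta_r(x^ny^m))=\varepsilon(x^{n+r}y^m)$, so the two agree. For the comultiplication, I would apply the formula of Lemma~\ref{basic}(2) to each side: on one hand
$$\Delta\theta_r(x^ny^m)=\Delta(x^{n+r}y^m)=\sum_{i=0}^m\binom{m}{i}_q x^{n+r}y^i\ot x^{n+r+i}y^{m-i},$$
and on the other hand
$$(\theta_r\ot\theta_r)\Delta(x^ny^m)=(\theta_r\ot\theta_r)\Big(\sum_{i=0}^m\binom{m}{i}_q x^ny^i\ot x^{n+i}y^{m-i}\Big)=\sum_{i=0}^m\binom{m}{i}_q x^{n+r}y^i\ot x^{n+r+i}y^{m-i}.$$
These two expressions coincide term by term, completing the check.

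There is no real obstacle here; the only substantive point to isolate is that the expression for $\Delta(x^ny^m)$ in Lemma~\ref{basic}(2) depends on the exponent $n$ only through an overall additive shift of every $x$-power occurring on the right-hand side, so that the translation $n\mapsto n+r$ intertwines $\Delta$ with itself. (Alternatively, one may simply observe that $\theta_r$ is left multiplication by the group-like element $x^r\in G(H)$, since $x^r\cdot x^ny^m=x^{n+r}y^m$; left multiplication by a group-like is automatically a coalgebra map because $\Delta(x^r h)=(x^r\ot x^r)\Delta(h)$ and $\varepsilon(x^r h)=\varepsilon(h)$. Either route yields the claim, but the basis computation above is self-contained.)
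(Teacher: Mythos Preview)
Your proof is correct and is exactly the ``straightforward verification by using Lemma~\ref{basic}(2)'' that the paper invokes without details; you have simply written out that verification explicitly. The alternative observation that $\theta_r$ is left multiplication by the group-like element $x^r$ is a nice shortcut, but your main argument already matches the paper's intended approach.
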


\begin{proof}
It follows from a straightforward verification by using Lemma \ref{basic}(2).
\end{proof}

Let $Aut_c(H)$ denote the coalgebra automorphism group of $H$,
that is, $Aut_c(H)$ is the group consisting of all coalgebra automorphisms of $H$
with the composition as its multiplication. Let $\Theta=\{\theta_r|r\in\mathbb{Z}\}$.
Then $\Theta$ is a subgroup of $Aut_c(H)$, and $\theta_r\theta_t=\theta_{r+t}$
for all $r, t\in\mathbb{Z}$. Let $Aut_0(H)=\{\phi\in Aut_c(H)|\phi(1)=1\}$.
Obviously, $Aut_0(H)$ is also a subgroup of $Aut_c(H)$. Let $\phi\in Aut_c(H)$.
Then it follows from Lemma \ref{auto01} that $\phi\in Aut_0(H)$ if and only if
$\phi(x^n)=x^n$ for all $n\in\mathbb{Z}$.

\begin{proposition}\label{Autosemi}
$Aut_0(H)$ is a normal subgroup of $Aut_c(H)$. Moreover, $Aut_c(H)$ is the internal
semidirect product of $Aut_0(H)$ and $\Theta$.
\end{proposition}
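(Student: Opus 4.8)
The plan is to verify the two standard ingredients of an internal semidirect product decomposition $Aut_c(H)=Aut_0(H)\rtimes\Theta$: first that $Aut_0(H)$ is normal, and second that $Aut_0(H)\cap\Theta=\{\mathrm{id}\}$ while $Aut_0(H)\cdot\Theta=Aut_c(H)$.

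For normality, I would take an arbitrary $\phi\in Aut_c(H)$ and $\psi\in Aut_0(H)$ and check that $\phi\psi\phi^{-1}$ fixes $1$. Since $\psi(1)=1$ and $\phi(1)=\phi\phi^{-1}(1)$, this is immediate: $(\phi\psi\phi^{-1})(1)=\phi\psi(\phi^{-1}(1))$, and because $\phi^{-1}\in Aut_c(H)$ sends group-likes to group-likes with $\phi^{-1}(1)$ again a group-like, actually the cleanest route is to note $\phi^{-1}(1)=1$ for every coalgebra automorphism. Indeed $1$ is the unique group-like $g$ with $\varepsilon(g)=1$... but every group-like has counit $1$, so that does not single it out. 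Instead I would use \leref{auto01} applied to $\phi^{-1}$: it shows $\phi^{-1}(x^n)=x^{n+r'}$ for some fixed $r'$, and combined with the formula for $\phi$ one sees $r'=-r$, hence $\phi^{-1}(1)=\phi^{-1}(x^0)=x^{0}$ precisely when $r'=0$; this need not hold. So the correct argument is: $(\phi\psi\phi^{-1})(1)=\phi(\psi(x^{-r}))=\phi(x^{-r})=x^{0}=1$, using $\phi^{-1}(1)=x^{-r}$ from \leref{auto01} and $\psi(x^{-r})=x^{-r}$ from the remark that $\psi\in Aut_0(H)$ fixes all $x^n$. Thus $\phi\psi\phi^{-1}\in Aut_0(H)$, proving normality.

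Next, for the semidirect product: I would show $\Theta\cap Aut_0(H)=\{\theta_0=\mathrm{id}\}$, which is clear since $\theta_r(1)=x^r=1$ forces $r=0$. Then I would show $Aut_c(H)=Aut_0(H)\Theta$. Given $\phi\in Aut_c(H)$, by \leref{auto01} there is an integer $r$ with $\phi(x^n)=x^{n+r}$ for all $n$. Consider $\psi:=\phi\theta_{-r}=\phi\theta_r^{-1}$. Then $\psi(1)=\phi\theta_{-r}(1)=\phi(x^{-r})=x^{0}=1$, so $\psi\in Aut_0(H)$, and $\phi=\psi\theta_r\in Aut_0(H)\Theta$. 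Combined with normality of $Aut_0(H)$, this gives exactly the internal semidirect product structure $Aut_c(H)=Aut_0(H)\rtimes\Theta$.

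I do not expect a serious obstacle here; everything reduces to bookkeeping with \leref{auto01} and \leref{Zpart}. The one point requiring a little care is making sure the integer $r$ attached to a coalgebra automorphism $\phi$ behaves multiplicatively, i.e. that the assignment $\phi\mapsto r$ is a group homomorphism $Aut_c(H)\to\mathbb Z$ with kernel $Aut_0(H)$ — this is what really underlies both the normality and the complement statements, and it follows by composing the formulas $\phi(x^n)=x^{n+r}$ and tracking indices. Once that homomorphism is identified, the proposition is the standard fact that a short exact sequence $1\to Aut_0(H)\to Aut_c(H)\to\mathbb Z\to 1$ split by $\Theta$ yields an internal semidirect product.
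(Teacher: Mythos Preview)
Your proposal is correct and follows essentially the same route as the paper: use \leref{auto01} to get the integer $r$ with $\phi(x^n)=x^{n+r}$, verify normality via $(\phi\psi\phi^{-1})(1)=\phi(\psi(x^{-r}))=\phi(x^{-r})=1$, show $\phi\theta_{-r}\in Aut_0(H)$ to obtain the factorization, and note $Aut_0(H)\cap\Theta=\{\mathrm{id}\}$. Your closing remark that $\phi\mapsto r$ defines a split surjection $Aut_c(H)\to\mathbb Z$ with kernel $Aut_0(H)$ is a clean conceptual repackaging of exactly this argument.
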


\begin{proof}
Let $\phi\in Aut_c(H)$ and $\psi\in Aut_0(H)$. Then by Lemma \ref{auto01}, there exists
an integer $r\in\mathbb{Z}$ such that $\phi(x^n)=x^{n+r}$ for all $n\in\mathbb{Z}$.
Hence $\phi^{-1}(x^n)=x^{n-r}$ for all $n\in\mathbb{Z}$. Thus, we have
$(\phi\psi\phi^{-1})(1)=(\phi\psi)(x^{-r})=\phi(x^{-r})=1$, and so
$\phi\psi\phi^{-1}\in Aut_0(H)$. This shows that $Aut_0(H)$ is a normal subgroup of $Aut_c(H)$.
Furthermore, we have $(\phi\theta_{-r})(1)=\phi(x^{-r})=1$. Hence $\phi\theta_{-r}\in Aut_0(H)$,
and so $\phi=(\phi\theta_{-r})\theta_r\in Aut_0(H)\Theta$. It follows that
$Aut_c(H)=Aut_0(H)\Theta$. Similarly, one can show that $Aut_c(H)=\Theta Aut_0(H)$.
Obviously, $Aut_0(H)\cap\Theta=\{{\rm id}\}$, where id denotes the identity map on $H$,
i.e., the identity element of the group $Aut_c(H)$. Thus, we have shown that
$Aut_c(H)$ is the internal semidirect product of $Aut_0(H)$ and $\Theta$.
\end{proof}

From Proposition \ref{Autosemi}, one gets the following corollary.

\begin{corollary}
There is a group isomorphism $Aut_c(H)/Aut_0(H)\cong\Theta$.
\end{corollary}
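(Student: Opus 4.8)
The plan is to deduce this immediately from Proposition~\ref{Autosemi}, invoking the standard fact that if a group $G$ is the internal semidirect product of a normal subgroup $N$ and a subgroup $K$, then $G/N\cong K$. So I would first recall from Proposition~\ref{Autosemi} that $Aut_0(H)$ is normal in $Aut_c(H)$, that $Aut_c(H)=Aut_0(H)\Theta$, and that $Aut_0(H)\cap\Theta=\{\mathrm{id}\}$. Then I would consider the canonical surjection $\pi:Aut_c(H)\to Aut_c(H)/Aut_0(H)$ and its restriction $\pi|_{\Theta}:\Theta\to Aut_c(H)/Aut_0(H)$.

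The key steps, in order, are: (1) $\pi|_{\Theta}$ is a group homomorphism, being the composite of the inclusion $\Theta\hookrightarrow Aut_c(H)$ with $\pi$; (2) $\pi|_{\Theta}$ is surjective, since by $Aut_c(H)=Aut_0(H)\Theta$ every element of $Aut_c(H)$, hence every coset modulo $Aut_0(H)$, has a representative lying in $\Theta$ (concretely, given $\phi\in Aut_c(H)$ with $\phi(x^n)=x^{n+r}$ as in Lemma~\ref{auto01}, one has $\phi Aut_0(H)=\theta_r Aut_0(H)$); (3) $\pi|_{\Theta}$ is injective, because its kernel is $\Theta\cap Aut_0(H)=\{\mathrm{id}\}$. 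Combining these three points yields that $\pi|_{\Theta}$ is a group isomorphism, so $\Theta\cong Aut_c(H)/Aut_0(H)$.

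There is essentially no genuine obstacle here; the only point requiring a little care is making sure the three properties of the internal semidirect product established in Proposition~\ref{Autosemi} are all used correctly — normality of $Aut_0(H)$ to guarantee the quotient group exists, the product decomposition for surjectivity, and the trivial intersection for injectivity. I would also remark that this isomorphism is compatible with the identification $\Theta\cong\mathbb Z$ via $\theta_r\mapsto r$ noted before the corollary, so that $Aut_c(H)/Aut_0(H)\cong\mathbb Z$ as well, though that refinement is not strictly part of the stated claim.
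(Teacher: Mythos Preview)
Your proposal is correct and takes essentially the same approach as the paper: the paper simply states that the corollary follows from Proposition~\ref{Autosemi}, and you have spelled out the standard argument that an internal semidirect product $G=N\Theta$ with $N$ normal and $N\cap\Theta=\{\mathrm{id}\}$ yields $G/N\cong\Theta$ via restriction of the quotient map.
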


From Lemma \ref{basic}, $H_0$ is a Hopf subalgebra of $H$. Moreover,
$H$ is a free left $H_0$-module with an $H_0$-basis $\{y^m|m\geqslant 0\}$.
Hence $H\ot H$ is a free left $H_0\ot H_0$-module, and $\{y^n\ot y^m|n, m\geqslant 0\}$
is an $H_0\ot H_0$-basis of $H\ot H$.

\begin{lemma}\label{primi}
$P_{x,1}(H)=ky+k(x-1)$ and $P_{x^m,1}(H)=k(x^m-1)$ if $m\neq 1$.
\end{lemma}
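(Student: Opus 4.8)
The plan is to establish the two inclusions separately. One direction is immediate: $\Delta(x^m-1)=x^m\ot x^m-1\ot 1=(x^m-1)\ot x^m+1\ot(x^m-1)$ shows $x^m-1\in P_{x^m,1}(H)$ for every $m$, and $\Delta(y)=y\ot x+1\ot y$ shows $y\in P_{x,1}(H)$; hence $k(x^m-1)\subseteq P_{x^m,1}(H)$ for all $m$, and $ky+k(x-1)\subseteq P_{x,1}(H)$. The content is the reverse inclusion, and for that I would exploit the free $H_0$-module descriptions of $H$ and $H\ot H$ recorded just before the lemma.

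Fix $m\in\mathbb Z$ and take $c\in P_{x^m,1}(H)$. Write $c=\sum_{j\geq 0}a_jy^j$ with $a_j\in H_0=kG(H)$, almost all zero, say $a_j=\sum_b\lambda_{b,j}x^b$. Since $\Delta$ is an algebra map, $\Delta(a_j)=\sum_b\lambda_{b,j}x^b\ot x^b$, and $\Delta(y^j)=\sum_{i=0}^j\binom{j}{i}_qy^i\ot x^iy^{j-i}$ by Lemma~\ref{basic}(2) (the case $n=0$). Expanding $\Delta(c)=\sum_j\Delta(a_j)\Delta(y^j)$ and rewriting in the $H_0\ot H_0$-basis $\{y^i\ot y^l\mid i,l\geq 0\}$ of $H\ot H$, using $y^i\ot x^iy^l=(1\ot x^i)(y^i\ot y^l)$, one gets
$$\Delta(c)=\sum_{i,l\geq 0}\binom{i+l}{i}_q\,\big(\Delta(a_{i+l})(1\ot x^i)\big)\,(y^i\ot y^l),$$
while $c\ot x^m=\sum_{i\geq 0}(a_i\ot x^m)(y^i\ot y^0)$ and $1\ot c=\sum_{l\geq 0}(1\ot a_l)(y^0\ot y^l)$. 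Comparing coefficients of $y^i\ot y^l$ in $\Delta(c)=c\ot x^m+1\ot c$ gives, for all $i,l\geq 0$,
$$\binom{i+l}{i}_q\,\Delta(a_{i+l})(1\ot x^i)=\delta_{l,0}\,(a_i\ot x^m)+\delta_{i,0}\,(1\ot a_l).$$

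Then I would read off the $a_j$ from this system. The equations with $i,l\geq 1$ have zero right-hand side, and since $\binom{i+l}{i}_q\neq 0$, $1\ot x^i$ is a unit, and $\Delta$ is injective, they force $a_j=0$ for all $j\geq 2$. The equations with $l=0$, $i\geq 1$ read $\Delta(a_i)(1\ot x^i)=a_i\ot x^m$ and force $\lambda_{b,i}=0$ unless $b=m-i$, i.e. $a_i\in kx^{m-i}$; the equations with $i=0$, $l\geq 1$ read $\Delta(a_l)=1\ot a_l$ and force $\lambda_{b,l}=0$ unless $b=0$, i.e. $a_l\in k\cdot 1$. For $l\geq 1$ these two together give $a_l\in k\cdot 1\cap kx^{m-l}$, which is $\{0\}$ unless $l=m$ (using that $G(H)=\langle x\rangle$ is infinite cyclic, so $x^{m-l}=1$ only if $l=m$); combined with $a_j=0$ for $j\geq 2$, this leaves at most one nonzero $a_j$ with $j\geq 1$, and only when $m=1$, in which case $a_1\in k\cdot 1$ is a scalar multiple of the identity and $a_1y$ a scalar multiple of $y$. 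Finally the $(i,l)=(0,0)$ equation is $\Delta(a_0)=a_0\ot x^m+1\ot a_0$, i.e. $a_0\in P_{x^m,1}(kG(H))$, and a direct coefficient comparison in the group algebra $k\langle x\rangle$ gives $P_{x^m,1}(k\langle x\rangle)=k(x^m-1)$. Assembling: $c=a_0\in k(x^m-1)$ when $m\neq 1$ and $c=a_0+a_1y\in k(x-1)+ky$ when $m=1$, which with the first paragraph proves the lemma.

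The only substantive work is the bookkeeping in solving the displayed system, and two points deserve care. The degenerate case $m=0$: there $k(x^0-1)=\{0\}$, and the group-algebra computation does yield $P_{1,1}(k\langle x\rangle)=\{0\}$ (the coefficient of $1\ot 1$ produces $\lambda_0=2\lambda_0$). And the reason the extra primitive $y$ appears exactly at $m=1$: for $m\geq 2$ it is the equations with both $i,l\geq 1$ that kill $a_m$, while for $m=1$ there is no such index pair, so $a_1$ survives. I expect the elementary identity $P_{x^m,1}(k\langle x\rangle)=k(x^m-1)$ to be the step most worth writing out in full, even though it is routine.
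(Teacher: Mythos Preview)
Your proof is correct and uses the same essential tool as the paper's argument: both expand $\Delta(c)$ and compare coefficients in the $H_0\ot H_0$-basis $\{y^i\ot y^l\}$ of $H\ot H$, then use injectivity of $\Delta$ and nonvanishing of $\binom{i+l}{i}_q$ to kill the higher-degree pieces.

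The organizational difference is minor but worth noting. The paper first invokes the graded coalgebra decomposition $P_{x^m,1}(H)=\bigoplus_{n\geq 0}\bigl(P_{x^m,1}(H)\cap H(n)\bigr)$, so it only ever has to analyze a homogeneous element $h=ay^n$ with a single $a\in H_0$; the coefficient comparison then reduces to the two equations $\Delta(a)(1\ot x)=a\ot x^m$ and $\Delta(a)=1\ot a$ in the surviving case $n=1$, and the contradiction for $n>1$ drops out immediately. You instead keep the full expansion $c=\sum_j a_j y^j$ and solve the whole system at once. Your route requires a little more bookkeeping (the intersection $k\cdot 1\cap kx^{m-l}$ and the separate handling of why $a_1$ survives only when $m=1$), while the paper's graded reduction front-loads the simplification. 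Both arrive at the same place with the same ingredients; the paper's version is slightly cleaner, yours is slightly more self-contained since it does not invoke the grading.
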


\begin{proof}
Let $m\in\mathbb{Z}$. Since $H=\bigoplus_{n=0}^{\infty}H(n)$ is a graded coalgebra,
we have
$$P_{x^m,1}(H)=\bigoplus\limits_{n=0}^{\infty}(P_{x^m,1}(H)\cap H(n)).$$
It is easy to check that $P_{x^m,1}(H)\cap H(0)=P_{x^m,1}(H)\cap H_0=k(x^m-1)$.
Now let $n>0$ and assume that there exists a nonzero element $h$ in $P_{x^m,1}(H)\cap H(n)$. Then we have
$\Delta(h)=h\ot x^m+1\ot h$ and $h=ay^n$ for some $0\neq a\in H_0$.
By Lemma \ref{basic}, we have
$$\Delta(h)=\sum\limits_{i=0}^n
\left(\begin{array}{c}
n\\
i\\
\end{array}\right)_q\Delta(a)(y^i\ot x^iy^{n-i})
=\sum\limits_{i=0}^n
\left(\begin{array}{c}
n\\
i\\
\end{array}\right)_q\Delta(a)(1\ot x^i)(y^i\ot y^{n-i})$$
and
$$h\ot x^m+1\ot h=ay^n\ot x^m+1\ot ay^n=(a\ot x^m)(y^n\ot 1)+(1\ot a)(1\ot y^n).$$
It follows that
$$\sum\limits_{i=0}^n
\left(\begin{array}{c}
n\\
i\\
\end{array}\right)_q\Delta(a)(1\ot x^i)(y^i\ot y^{n-i})
=(a\ot x^m)(y^n\ot 1)+(1\ot a)(1\ot y^n).$$
We have already known that $H\ot H$ is a free left $H_0\ot H_0$-module with a basis
$\{y^s\ot y^t|s, t\geqslant 0\}$. If $n>1$, one can choose an integer $i$ with
$0<i<n$. Then, by comparing the coefficients of the basis element $y^i\ot y^{n-i}$
of the both sides of the above equation, one finds that
$\left(\begin{array}{c}
n\\
i\\
\end{array}\right)_q\Delta(a)(1\ot x^i)=0$. Since $\left(\begin{array}{c}
n\\
i\\
\end{array}\right)_q\neq 0$, $\Delta(a)(1\ot x^i)=0$, and so $\Delta(a)=0$.
This implies that $a=0$ since $\Delta$ is injective, a contradiction.
Therefore, $n=1$ and $h=ay$. Thus, the above equation becomes
$$\Delta(a)(1\ot x)(y\ot 1)+\Delta(a)(1\ot y)
=(a\ot x^m)(y\ot 1)+(1\ot a)(1\ot y).$$
It follows that $\Delta(a)(1\ot x)=a\ot x^m$ and $\Delta(a)=1\ot a$.
From $\Delta(a)=1\ot a$, one knows that $a\in k$. Since $a\neq0$,
it follows from $\Delta(a)(1\ot x)=a\ot x^m$ that $m=1$. This completes the proof.
\end{proof}

\begin{lemma}\label{inclu}
Let $\phi\in Aut_0(H)$. Then $\phi(H(m))\subseteq\sum_{i=0}^mH(i)$
for all $m\geqslant 0$.
\end{lemma}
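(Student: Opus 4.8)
The plan is to prove the statement by induction on $m$, using the fact that $\phi\in Aut_0(H)$ acts as the identity on group-likes together with the graded coalgebra structure of $H$. The base cases $m=0$ and $m=1$ are already in hand: for $m=0$ we have $\phi(H(0))=\phi(H_0)=H_0=H(0)$ since $\phi(x^n)=x^n$ for all $n$ by the remark following \prref{Autosemi}; for $m=1$, \leref{auto01} gives $\phi(x^n y)=\a_n x^n y+\b_n(x^{n+1}-x^n)\in H(1)+H(0)=\sum_{i=0}^1 H(i)$, so the inclusion holds. So assume $m\geqslant 2$ and that $\phi(H(j))\subseteq\sum_{i=0}^j H(i)$ for all $j<m$; it suffices to show $\phi(x^n y^m)\in\sum_{i=0}^m H(i)$ for each fixed $n\in\mathbb Z$, since $H(m)=H_0 y^m$ is spanned by these elements.

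Fix $n$ and write $\phi(x^n y^m)=\sum_{s,\ell}\nu_{s,\ell}x^s y^\ell$ with $\nu_{s,\ell}\in k$, almost all zero; the goal is to show $\nu_{s,\ell}=0$ whenever $\ell>m$. Apply $\Delta$ and use that $\phi$ is a coalgebra map: $\Delta\phi(x^n y^m)=(\phi\ot\phi)\Delta(x^n y^m)$. By \leref{basic}(2), $\Delta(x^n y^m)=\sum_{i=0}^m\binom{m}{i}_q x^n y^i\ot x^{n+i}y^{m-i}$, and applying $\phi\ot\phi$ to this, the induction hypothesis (applicable since each of $i$ and $m-i$ is $\leqslant m$, and strictly less when $0<i<m$) shows that $(\phi\ot\phi)\Delta(x^n y^m)$ lies in $\bigl(\sum_{i=0}^m H(i)\bigr)\ot\bigl(\sum_{i=0}^m H(i)\bigr)$; more precisely it lies in $\sum_{a+b\leqslant m}H(a)\ot H(b)$ after a bidegree bookkeeping. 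On the other side, using \leref{basic}(2) again, $\Delta\phi(x^n y^m)=\sum_{s,\ell}\nu_{s,\ell}\sum_{i=0}^\ell\binom{\ell}{i}_q x^s y^i\ot x^{s+i}y^{\ell-i}$. Let $N=\max\{\ell:\nu_{s,\ell}\neq 0\text{ for some }s\}$ and suppose toward a contradiction that $N>m$. Picking the term with $i=N$ (so bidegree $(N,0)$ in the $H_0 y^\bullet$ grading), the right-hand side contributes $\sum_s\nu_{s,N}\binom{N}{N}_q x^s y^N\ot x^{s+N}$, a nonzero element of $H(N)\ot H(0)$; but the left-hand side has no component in $H(N)\ot H(0)$ with $N>m$ by the previous sentence. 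Since the decomposition $H\ot H=\bigoplus_{a,b}H(a)\ot H(b)$ is a direct sum and $\{x^s y^N\ot x^{s+N}\}$ are linearly independent, this forces $\nu_{s,N}=0$ for all $s$, contradicting the choice of $N$. Hence $N\leqslant m$, i.e.\ $\phi(x^n y^m)\in\sum_{i=0}^m H(i)$, completing the induction.

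The main obstacle is the bidegree bookkeeping in the middle step: one must verify carefully that after applying $\phi\ot\phi$ to the summand $\binom{m}{i}_q x^n y^i\ot x^{n+i}y^{m-i}$, the image sits in $\sum_{a\leqslant i,\,b\leqslant m-i}H(a)\ot H(b)$ and hence in $\sum_{a+b\leqslant m}H(a)\ot H(b)$, so that no $H(a)\ot H(b)$ with $a>m$ (or $b>m$, or $a+b>m$) appears; this uses the induction hypothesis applied separately in each tensor factor and the fact that the grading on $H\ot H$ is the tensor product grading. Once this is pinned down, comparing the top $y$-degree component on each side of $\Delta\phi(x^n y^m)=(\phi\ot\phi)\Delta(x^n y^m)$ — exactly the kind of coefficient comparison used in the proofs of \leref{auto01} and \leref{primi} — yields the result. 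I would also note the small point that one only needs to rule out $y$-degrees exceeding $m$ on \emph{one} tensor leg, since the term of bidegree $(N,0)$ with $N$ maximal already detects the top degree of $\phi(x^n y^m)$.
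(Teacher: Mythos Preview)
Your induction scheme and the idea of comparing top-degree components of $\Delta\phi(x^ny^m)=(\phi\ot\phi)\Delta(x^ny^m)$ are sound, and the base cases are fine. But the central claim that $(\phi\ot\phi)\Delta(x^ny^m)$ lies in $\sum_{a+b\leqslant m}H(a)\ot H(b)$ is circular. In the expansion
\[
(\phi\ot\phi)\Delta(x^ny^m)=\sum_{i=0}^{m}\binom{m}{i}_q\phi(x^ny^i)\ot\phi(x^{n+i}y^{m-i}),
\]
the induction hypothesis controls only the summands with $0<i<m$. The endpoint summands are $x^n\ot\phi(x^ny^m)$ (for $i=0$) and $\phi(x^ny^m)\ot x^{n+m}$ (for $i=m$), and these involve $\phi(x^ny^m)$ itself. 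In particular the $i=m$ term has a nonzero component in $H(N)\ot H(0)$, namely $\sum_s\nu_{s,N}x^sy^N\ot x^{n+m}$, so your assertion that ``the left-hand side has no component in $H(N)\ot H(0)$'' is false as stated. Your bidegree bookkeeping paragraph explicitly asks that $\phi(x^ny^m)\ot x^{n+m}\in\sum_{a\leqslant m}H(a)\ot H(0)$, which is exactly the conclusion you are trying to prove.

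The paper avoids this trap by isolating the two endpoint terms rather than absorbing them into the low-degree part: it writes Eq.~(2.2), notes that only the middle sum $\sum_{i=1}^{m-1}(\cdots)$ lies in $\sum_{j\leqslant m}(H\ot H)(j)$, and then compares homogeneous components of \emph{total} degree $l>m$ to obtain $\Delta(a_ly^l)=x^n\ot a_ly^l+a_ly^l\ot x^{n+m}$; this forces $x^{-n}a_ly^l\in P_{x^m,1}(H)\cap H(l)$, which \leref{primi} shows is zero since $l>m>1$. Your argument can be repaired along similar lines without invoking \leref{primi}: keep the endpoint contributions, and compare the $H(N)\ot H(0)$ and $H(0)\ot H(N)$ components separately. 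The first comparison gives $\sum_s\nu_{s,N}x^sy^N\ot x^{s+N}=\sum_s\nu_{s,N}x^sy^N\ot x^{n+m}$, forcing any nonzero $\nu_{s,N}$ to have $s=n+m-N$; the second gives $s=n$. These are incompatible for $N>m$, yielding the desired contradiction.
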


\begin{proof}
Note that $H(m)=H_0y^m$ for all $m\geqslant 0$, and that $H_0$ has a $k$-basis
$\{x^n|n\in\mathbb{Z}\}$. Hence we only need to show that
$\phi(x^ny^m)\subseteq\sum_{i=0}^mH(i)$ for all $n\in\mathbb{Z}$ and $m\in\mathbb{N}$.
When $m=0$ or $m=1$, it follows from Lemma \ref{auto01} that
$\phi(x^ny^m)\subseteq\sum_{i=0}^mH(i)$ for all $n\in\mathbb{Z}$.
Now let $m>1$ and assume $\phi(x^ny^s)\subseteq\sum_{i=0}^sH(i)$ for all $n\in\mathbb{Z}$
and $0\leqslant s<m$. Let $n\in\mathbb{Z}$. Then by Lemma \ref{basic}, we may assume
that $\phi(x^ny^m)=\sum_{i=0}^la_iy^i$, where $a_i\in H_0$, $0\leqslant i\leqslant l$,
and $a_l\neq 0$. It is enough to show that $l\leqslant m$.
Suppose $l>m$. Then $\Delta\phi(x^ny^m)=(\phi\ot\phi)\Delta(x^ny^m)$
since $\phi$ is a coalgebra map.
Now we have
$\Delta\phi(x^ny^m)=\Delta(\sum_{i=0}^la_iy^i)=\sum_{i=0}^l\Delta(a_iy^i)$
and
$$\begin{array}{rcl}
(\phi\ot\phi)\Delta(x^ny^m)&=&(\phi\ot\phi)(\sum\limits_{i=0}^m
\left(\begin{array}{c}
m\\
i\\
\end{array}\right)_qx^ny^i\ot x^{n+i}y^{m-i})\\
&=&\sum\limits_{i=1}^{m-1}
\left(\begin{array}{c}
m\\
i\\
\end{array}\right)_q\phi(x^ny^i)\ot\phi(x^{n+i}y^{m-i})\\
&&+\phi(x^n)\ot\phi(x^ny^m)+\phi(x^ny^m)\ot\phi(x^{n+m})\\
&=&\sum\limits_{i=1}^{m-1}
\left(\begin{array}{c}
m\\
i\\
\end{array}\right)_q\phi(x^ny^i)\ot\phi(x^{n+i}y^{m-i})\\
&&+\sum\limits_{i=0}^l(x^n\ot a_iy^i+a_iy^i\ot x^{n+m}).\\
\end{array}$$
Therefore, we have
$$\begin{array}{rcl}
\hspace{1.7cm}\sum\limits_{i=0}^l\Delta(a_iy^i)&=&\sum\limits_{i=1}^{m-1}
\left(\begin{array}{c}
m\\
i\\
\end{array}\right)_q\phi(x^ny^i)\ot\phi(x^{n+i}y^{m-i})\\
&&+\sum\limits_{i=0}^l(x^n\ot a_iy^i+a_iy^i\ot x^{n+m}).\hspace{2.5cm}(2.2)\\
\end{array}$$
Since $H$ is a graded coalgebra, so is $H\ot H$ with the grading given by
$(H\ot H)(n)=\sum_{i=0}^nH(i)\ot H(n-i)$ for all $n\geqslant 0$.
By the induction hypothesis, one knows that $\sum\limits_{i=1}^{m-1}
\left(\begin{array}{c}
m\\
i\\
\end{array}\right)_q\phi(x^ny^i)\ot\phi(x^{n+i}y^{m-i})\subseteq
\sum\limits_{i=0}^m(H\ot H)(i)$. From the definition of graded coalgebras,
the comultiplication $\Delta$ of a graded coalgebra is a graded map.
Comparing the homogeneous components of degree $l$ of the both sides of Eq.(2.2),
one finds that
$$\Delta(a_ly^l)=x^n\ot a_ly^l+a_ly^l\ot x^{n+m}=a_ly^l\ot x^{n+m}+x^n\ot a_ly^l.$$
Hence $a_ly^l\in P_{x^{n+m},x^n}(H)$, and so $x^{-n}a_ly^l\in P_{x^m,1}(H)$.
However, $0\neq x^{-n}a_ly^l\in H(l)$ and $l>m>1$, which contradicts to Lemma \ref{primi}.
This completes the proof.
\end{proof}

\begin{lemma}\label{notinclu}
Let $\phi\in Aut_0(H)$. Then $\phi(H(m))\nsubseteq\sum_{i=0}^{m-1}H(i)$
for all $m\geqslant 1$.
\end{lemma}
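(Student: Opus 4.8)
The plan is to work with the coalgebra filtration $F_m:=\sum_{i=0}^m H(i)=\bigoplus_{i=0}^m H_0y^i$ and to combine Lemma~\ref{inclu}, applied to $\phi$, with the same statement applied to $\phi^{-1}$. The point is that Lemma~\ref{inclu} only gives the inclusion $\phi(F_m)\subseteq F_m$, whereas I want the equality $\phi(F_m)=F_m$; the present lemma then drops out almost immediately from a dimension-type contradiction on the top graded piece.

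First I would note that $\phi^{-1}\in Aut_0(H)$: it is a coalgebra automorphism, being the inverse of one, and $\phi^{-1}(1)=\phi^{-1}(\phi(1))=1$. Hence Lemma~\ref{inclu} applies to both $\phi$ and $\phi^{-1}$, giving $\phi(F_m)\subseteq F_m$ and $\phi^{-1}(F_m)\subseteq F_m$ for every $m\>0$ (taking sums over $0\<i\<m$ of the stated inclusions $\phi(H(i))\subseteq\sum_{j=0}^i H(j)$). Applying $\phi$ to the second inclusion turns it into $F_m\subseteq\phi(F_m)$, so in fact $\phi(F_m)=F_m$ for all $m\>0$; in particular $\phi$ restricts to a linear bijection of $F_{m-1}$ onto itself whenever $m\>1$.

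Now I would argue by contradiction. Assume $\phi(H(m))\subseteq F_{m-1}$ for some $m\>1$. Since $F_m=F_{m-1}+H(m)$, linearity of $\phi$ gives $\phi(F_m)=\phi(F_{m-1})+\phi(H(m))\subseteq F_{m-1}+F_{m-1}=F_{m-1}$. But $\phi(F_m)=F_m$, so $F_m\subseteq F_{m-1}$, which is absurd: since $H=\bigoplus_{i\>0}H(i)$ is a direct sum and $H(m)=H_0y^m\neq 0$, the element $y^m$ lies in $F_m$ but not in $F_{m-1}$. This contradiction proves the lemma.

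I do not expect a real obstacle here once Lemma~\ref{inclu} is in hand; the one subtlety worth flagging is that the inclusion $\phi(F_m)\subseteq F_m$ must be upgraded to the equality $\phi(F_m)=F_m$, and since the spaces $F_m$ are infinite-dimensional this cannot be read off from injectivity of $\phi$ alone — it is exactly for this that one also invokes Lemma~\ref{inclu} for $\phi^{-1}$. Everything else is purely formal, and notably the argument does not need the finer information of Lemma~\ref{primi}.
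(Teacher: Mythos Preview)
Your proof is correct and follows essentially the same approach as the paper: both apply Lemma~\ref{inclu} to $\phi^{-1}\in Aut_0(H)$ and derive the contradiction $H(m)\subseteq F_{m-1}$ (the paper does this directly via $H(m)=\phi^{-1}(\phi(H(m)))\subseteq\phi^{-1}(F_{m-1})\subseteq F_{m-1}$, while you first upgrade to $\phi(F_m)=F_m$ and then argue, but the content is the same). Your closing remark about why one must invoke $\phi^{-1}$ rather than rely on injectivity alone is exactly the point.
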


\begin{proof}
Suppose that there is an $m\geqslant 1$ such that $\phi(H(m))\subseteq\sum_{i=0}^{m-1}H(i)$.
Since $\phi\in Aut_0(H)$, $\phi^{-1}\in Aut_0(H)$. By lemma \ref{inclu}, we have
$\phi^{-1}(\sum_{i=0}^{m-1}H(i))\subseteq\sum_{i=0}^{m-1}H(i)$.
It follows that $H(m)=(\phi^{-1}\phi)(H(m))=\phi^{-1}(\phi(H(m)))
\subseteq\phi^{-1}(\sum_{i=0}^{m-1}H(i))\subseteq\sum_{i=0}^{m-1}H(i)$,
which is impossible. This completes the proof.
\end{proof}

\begin{lemma}\label{firstitem}
Let $\phi\in Aut_0(H)$. Then for any $n\in\mathbb{Z}$ and $m\geqslant 1$,
$\phi(x^ny^m)=\a_{n, m}x^ny^m+h_{n,m}$ for some $\a_{n,m}\in k^{\times}$ and $h_{n,m}\in\sum_{i=0}^{m-1}H(i)$.
Moreover, if $\a_{n,1}=1$ for all $n\in\mathbb{Z}$, then
$\a_{n,m}=1$ for all $n\in\mathbb{Z}$ and $m\geqslant 1$.
\end{lemma}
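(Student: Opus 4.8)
The plan is to prove the displayed formula by induction on $m$, and then to deduce the ``moreover'' part from a multiplicativity relation among the scalars $\a_{n,m}$ that drops out of the same computation. The case $m=1$ is immediate: since $\phi\in Aut_0(H)$, Lemma \ref{auto01} (with $r=0$) gives $\phi(x^ny)=\a_{n,1}x^ny+\b_n(x^{n+1}-x^n)$ with $\a_{n,1}\in k^{\times}$ and $\b_n(x^{n+1}-x^n)\in H(0)$.

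For the inductive step I would fix $m\>2$, assume the result for all integers and all exponents $<m$, fix $n\in\mathbb Z$, and use Lemma \ref{inclu} to write $\phi(x^ny^m)=\sum_{i=0}^ma_iy^i$ with $a_i\in H_0$. Apply $\Delta$ to both sides of $\Delta\phi(x^ny^m)=(\phi\ot\phi)\Delta(x^ny^m)$ and extract the degree-$m$ homogeneous component in the graded coalgebra $H\ot H$. On the left this component is exactly $\Delta(a_my^m)$, because $\Delta$ is a graded map. On the right, expand $\Delta(x^ny^m)$ by Lemma \ref{basic}(2); by the induction hypothesis $\phi(x^ny^j)$ and $\phi(x^{n+j}y^{m-j})$ lie in $\sum_{i=0}^{j}H(i)$ and $\sum_{i=0}^{m-j}H(i)$ with leading terms $\a_{n,j}x^ny^j$ and $\a_{n+j,m-j}x^{n+j}y^{m-j}$, so only the products of leading terms survive in degree $m$, giving
$$\Delta(a_my^m)=x^n\ot a_my^m+a_my^m\ot x^{n+m}
+\sum_{0<j<m}c_j\,x^ny^j\ot x^{n+j}y^{m-j},$$
where each $c_j$ is $\a_{n,j}\a_{n+j,m-j}$ times a nonzero $q$-binomial coefficient. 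Writing $a_m=\sum_c\g_cx^c$ and expanding $\Delta(a_my^m)$ once more by Lemma \ref{basic}(2), I would compare coefficients in the free left $H_0\ot H_0$-module $H\ot H$ with basis $\{y^s\ot y^t\mid s,t\>0\}$ (recall $H\ot H$ is $H_0\ot H_0$-free on this basis): the coefficient of $y^0\ot y^m$ forces $\sum_c\g_cx^c\ot x^c=\sum_c\g_cx^n\ot x^c$ in $H_0\ot H_0$, hence $\g_c=0$ for $c\neq n$, so $a_m=\g_nx^n$. Put $\a_{n,m}:=\g_n$ and $h_{n,m}:=\phi(x^ny^m)-\a_{n,m}x^ny^m\in\sum_{i=0}^{m-1}H(i)$.

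It remains to check $\a_{n,m}\neq 0$ and the ``moreover'' part. If $\a_{n,m}=0$ then $\phi(x^ny^m)=h_{n,m}\in\sum_{i=0}^{m-1}H(i)$; since $\phi^{-1}\in Aut_0(H)$, Lemma \ref{inclu} gives $\phi^{-1}(\sum_{i=0}^{m-1}H(i))\subseteq\sum_{i=0}^{m-1}H(i)$, so $x^ny^m=(\phi^{-1}\phi)(x^ny^m)\in\sum_{i=0}^{m-1}H(i)$, contradicting $0\neq x^ny^m\in H(m)$. This finishes the induction. For the last claim, with $a_m=\a_{n,m}x^n$ now known, comparing in the displayed identity the coefficient of $x^ny^j\ot x^{n+j}y^{m-j}$ for some $0<j<m$ (which exists as $m\>2$) and cancelling the nonzero $q$-binomial coefficient yields $\a_{n,m}=\a_{n,j}\a_{n+j,m-j}$; in particular $\a_{n,m}=\a_{n,1}\a_{n+1,m-1}$, so if $\a_{n,1}=1$ for all $n$, an induction on $m$ with base case $m=1$ gives $\a_{n,m}=1$ for all $n$ and all $m\>1$.

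The main obstacle is the bookkeeping in the inductive step: correctly isolating the degree-$m$ part of $(\phi\ot\phi)\Delta(x^ny^m)$ (which is where the induction hypothesis and Lemma \ref{basic}(2) combine) and matching coefficients in the free $H_0\ot H_0$-module to pin down $a_m$. Once the displayed identity is established, both the non-vanishing of $\a_{n,m}$ and the multiplicativity relation are short consequences.
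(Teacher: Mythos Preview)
Your proof is correct and follows essentially the same approach as the paper: both use Lemma~\ref{inclu} to write $\phi(x^ny^m)=\sum_{i\leqslant m}a_iy^i$, compare coefficients in $\Delta\phi(x^ny^m)=(\phi\ot\phi)\Delta(x^ny^m)$ via the free $H_0\ot H_0$-module structure to force $a_m\in kx^n$, and then read off the multiplicativity $\a_{n,m}=\a_{n,j}\a_{n+j,m-j}$ from the coefficient of $y^j\ot y^{m-j}$. The only organizational difference is that the paper extracts the coefficient of $1\ot y^m$ directly from the full identity (its Eq.~(2.3)) without invoking induction, and quotes Lemma~\ref{notinclu} for the non-vanishing, whereas you pass through the degree-$m$ component using the inductive hypothesis and reprove the non-vanishing inline via $\phi^{-1}$; both routes are equivalent.
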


\begin{proof}
For any $n\in\mathbb{Z}$ and $m\geqslant 0$, by Lemmas \ref{auto01}, \ref{inclu}
and \ref{notinclu}, one may assume that $\phi(x^ny^m)=\sum_{i=0}^ma_{n,m,i}y^i$,
where $a_{n,m,i}\in H_0$ with $a_{n,m,m}\neq 0$. It follows from Lemma \ref{auto01}
that $a_{n,0,0}=x^n$ and $a_{n,1,1}=\a_{n,1} x^n$ for some $\a_{n,1}\in k^{\times}$.
Now let us consider the case of $m>1$. Since $\phi$ is a coalgebra map,
$\Delta\phi(x^ny^m)=(\phi\ot\phi)\Delta(x^ny^m)$. By Lemma \ref{basic}, we have
$$\begin{array}{rcl}
\Delta\phi(x^ny^m)&=&\sum\limits_{i=0}^m\Delta(a_{n,m,i}y^i)\\
&=&\sum\limits_{i=0}^m\Delta(a_{n,m,i})(\sum\limits_{j=0}^i
\left(\begin{array}{c}
i\\
j\\
\end{array}\right)_qy^j\ot x^jy^{i-j})\\
&=&\sum\limits_{i=0}^m\sum\limits_{j=0}^i
\left(\begin{array}{c}
i\\
j\\
\end{array}\right)_q\Delta(a_{n,m,i})(1\ot x^j)(y^j\ot y^{i-j})\\
\end{array}$$
and
$$\begin{array}{rcl}
(\phi\ot\phi)\Delta(x^ny^m)&=&(\phi\ot\phi)(\sum\limits_{i=0}^m
\left(\begin{array}{c}
m\\
i\\
\end{array}\right)_qx^ny^i\ot x^{n+i}y^{m-i})\\
&=&\sum\limits_{i=0}^m
\left(\begin{array}{c}
m\\
i\\
\end{array}\right)_q\phi(x^ny^i)\ot\phi(x^{n+i}y^{m-i})\\
&=&\sum\limits_{i=0}^m
\left(\begin{array}{c}
m\\
i\\
\end{array}\right)_q\sum\limits_{j=0}^i\sum\limits_{l=0}^{m-i}a_{n,i,j}y^j\ot a_{n+i,m-i,l}y^l\\
&=&\sum\limits_{i=0}^m\sum\limits_{j=0}^i\sum\limits_{l=0}^{m-i}
\left(\begin{array}{c}
m\\
i\\
\end{array}\right)_q(a_{n,i,j}\ot a_{n+i,m-i,l})(y^j\ot y^l).\\
\end{array}$$
It follows that
$$\begin{array}{rl}
&\sum\limits_{i=0}^m\sum\limits_{j=0}^i
\left(\begin{array}{c}
i\\
j\\
\end{array}\right)_q\Delta(a_{n,m,i})(1\ot x^j)(y^j\ot y^{i-j})\\
\hspace{1.5cm}=&\sum\limits_{i=0}^m\sum\limits_{j=0}^i\sum\limits_{l=0}^{m-i}
\left(\begin{array}{c}
m\\
i\\
\end{array}\right)_q(a_{n,i,j}\ot a_{n+i,m-i,l})(y^j\ot y^l).\hspace{2cm}(2.3)\\
\end{array}$$
Similarly to the proof of Lemma \ref{primi}, by comparing the coefficients in $H_0\ot H_0$
of the basis element $1\ot y^m$ of the both sides of Eq.(2.3), one gets that
$\Delta(a_{n,m,m})=a_{n,0,0}\ot a_{n,m,m}=x^n\ot a_{n,m,m}$. Hence
$a_{n,m,m}=({\rm id}\ot\varepsilon)\Delta(a_{n,m,m})=\varepsilon(a_{n,m,m})x^n$.
Then from $a_{n,m,m}\neq 0$, one knows that $\varepsilon(a_{n,m,m})\neq 0$.
Let $\a_{n,m}=\varepsilon(a_{n,m,m})$. Then $\a_{n,m}\in k^{\times}$ and
$a_{n,m,m}=\a_{n,m}x^n$ for all $n\in\mathbb Z$ and $m\>1$.

Now suppose that $\a_{n,1}=1$ for all $n\in\mathbb{Z}$.
Note that Eq.(2.3) holds for all $n\in\mathbb Z$ and $m\>1$.
By comparing the coefficients in $H_0\ot H_0$
of the basis element $y^j\ot y^{m-j}$ of the both sides of Eq.(2.3), one gets that
$$\left(\begin{array}{c}
m\\
j\\
\end{array}\right)_q\Delta(a_{n,m,m})(1\ot x^j)=
\left(\begin{array}{c}
m\\
j\\
\end{array}\right)_q(a_{n,j,j}\ot a_{n+j,m-j,m-j}).$$
It follows that $\a_{n,m}=\a_{n,j}\a_{n+j, m-j}$ for all $n\in\mathbb Z$ and $0\<j\<m$,
where $\a_{n,0}=1$ since $a_{n,0,0}=x^n$. Thus, $\a_{n,m}=\a_{n,1}\a_{n+1, m-1}$.
Then by the induction on $m$, one can check that $\a_{n,m}=\a_{n,1}\a_{n+1,1}\cdots\a_{n+m-1,1}=1$.
This completes the proof.
\end{proof}

Let $Aut_c^{gr}(H)$ denote the graded automorphism group of the graded coalgebra $H$,
that is
$$Aut_c^{gr}(H)=\{\phi\in Aut_c(H)|\phi(H(n))\subseteq H(n), \forall\ n\geqslant 0\}.$$
Then obviously, $\Theta\subseteq Aut_c^{gr}(H)$. Let
$Aut_0^{gr}(H)=Aut_c^{gr}(H)\cap Aut_0(H)$. Then from Proposition \ref{Autosemi}, one gets the
following corollary.

\begin{corollary}\label{gr-semi}
$Aut_0^{gr}(H)$ is a normal subgroup of $Aut_c^{gr}(H)$. Moreover, $Aut_c^{gr}(H)$ is the
internal semidirect product of $Aut_0^{gr}(H)$ and $\Theta$.
\end{corollary}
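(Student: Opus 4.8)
The plan is to deduce the corollary formally from Proposition~\ref{Autosemi} together with the inclusion $\Theta\subseteq Aut_c^{gr}(H)$ already noted above. The underlying principle is that an internal semidirect product decomposition $G=N\rtimes\Theta$ restricts to any intermediate subgroup $G'$ with $\Theta\subseteq G'\subseteq G$, yielding $G'=(N\cap G')\rtimes\Theta$; here $G=Aut_c(H)$, $N=Aut_0(H)$, and $G'=Aut_c^{gr}(H)$, so that $N\cap G'=Aut_0^{gr}(H)$.

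Concretely I would proceed in four short steps. First, confirm $\Theta\subseteq Aut_c^{gr}(H)$: by the explicit formula $\theta_r(x^ny^m)=x^{n+r}y^m$ from Lemma~\ref{Zpart} and the description $H(m)=H_0y^m$ from Lemma~\ref{basic}(4), each $\theta_r$ maps $H(m)$ into $H(m)$ for every $m\geqslant 0$, hence is a graded coalgebra automorphism. Second, since $Aut_0(H)$ is normal in $Aut_c(H)$ by Proposition~\ref{Autosemi}, its intersection with the subgroup $Aut_c^{gr}(H)$, namely $Aut_0^{gr}(H)=Aut_c^{gr}(H)\cap Aut_0(H)$, is normal in $Aut_c^{gr}(H)$. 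Third, $Aut_0^{gr}(H)\cap\Theta\subseteq Aut_0(H)\cap\Theta=\{{\rm id}\}$, again by Proposition~\ref{Autosemi}. Fourth, for the factorization: given $\phi\in Aut_c^{gr}(H)$, use $Aut_c(H)=Aut_0(H)\Theta$ to write $\phi=\psi\theta_r$ with $\psi\in Aut_0(H)$ and $r\in\mathbb{Z}$; then $\psi=\phi\theta_{-r}$ lies in $Aut_c^{gr}(H)$ because both $\phi$ and $\theta_{-r}$ do, so $\psi\in Aut_c^{gr}(H)\cap Aut_0(H)=Aut_0^{gr}(H)$ and $\phi\in Aut_0^{gr}(H)\Theta$. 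Thus $Aut_c^{gr}(H)=Aut_0^{gr}(H)\Theta$, and $Aut_c^{gr}(H)=\Theta Aut_0^{gr}(H)$ follows symmetrically or from the normality established in the second step. Combining these facts gives that $Aut_c^{gr}(H)$ is the internal semidirect product of $Aut_0^{gr}(H)$ and $\Theta$.

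I do not expect a genuine obstacle here: once $\Theta\subseteq Aut_c^{gr}(H)$ is verified, everything else is a purely group-theoretic consequence of Proposition~\ref{Autosemi}. The only point requiring a moment's care is that the complement $\Theta$ appearing in both decompositions is literally the same fixed subgroup of $Aut_c(H)$, which is exactly what the inclusion $\Theta\subseteq Aut_c^{gr}(H)$ guarantees; the verification of that inclusion is the single (and entirely routine) computation in the argument.
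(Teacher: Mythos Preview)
Your proposal is correct and follows essentially the same approach as the paper: the corollary is stated there as an immediate consequence of Proposition~\ref{Autosemi} together with the already-noted inclusion $\Theta\subseteq Aut_c^{gr}(H)$. You have simply spelled out the routine group-theoretic restriction argument that the paper leaves implicit.
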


It is enough to discuss the group structure of $Aut_0^{gr}(H)$ in order to discuss the group
structure of $Aut_c^{gr}(H)$. Let $\phi\in Aut_0^{gr}(H)$. Then by Lemmas \ref{auto01} and
\ref{firstitem}, there exists a family of nonzero scales
$\{\a_{n,m}\in k^{\ti}|n\in\mathbb{Z}, m\in\mathbb{N}\}$ such that
$\phi(x^ny^m)=\a_{n,m}x^ny^m$, where $\a_{n,0}=1$, $n\in\mathbb Z$, $m\in\mathbb N$.

\begin{lemma}\label{s-n-cond}
Let $\{\a_{n,m}\in k^{\ti}|n\in\mathbb{Z}, m\in\mathbb{N}\}$ be a family of nonzero scales.
Define a linear endomorphism $\phi: H\ra H$ by
$$\phi(x^ny^m)=\a_{n,m}x^ny^m,\ \ n\in\mathbb{Z},\ m\in\mathbb{N}.$$
Then $\phi$ is a coalgebra automorphism of $H$ if and only if
$\a_{n,m}=\a_{n,i}\a_{n+i,m-i}$ for all $n\in\mathbb{Z}$, $m\in\mathbb{N}$ and $0\leqslant i\leqslant m$.
\end{lemma}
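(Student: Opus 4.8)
The plan is to verify directly that the stated multiplicativity condition on the scalars is equivalent to $\phi$ being a coalgebra map, and then to observe that the "automorphism" part is automatic. First I would check the easy direction: assume $\alpha_{n,m}=\alpha_{n,i}\alpha_{n+i,m-i}$ for all admissible $n,m,i$. I need to show $\Delta\phi = (\phi\ot\phi)\Delta$ and $\varepsilon\phi=\varepsilon$ on the basis $\{x^ny^m\}$. For $\varepsilon$: taking $i=0$ (or $i=m$) in the condition forces nothing new, but since $\varepsilon(x^ny^m)=0$ for $m\geqslant1$ and $\varepsilon(\phi(x^ny^m))=\alpha_{n,m}\varepsilon(x^ny^m)=0$, and for $m=0$ we have $\alpha_{n,0}=1$ (take $m=i=0$), so $\varepsilon$ is preserved. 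For $\Delta$: using Lemma~\ref{basic}(2),
$$\Delta\phi(x^ny^m)=\alpha_{n,m}\sum_{i=0}^m\binom{m}{i}_q x^ny^i\ot x^{n+i}y^{m-i},$$
while
$$(\phi\ot\phi)\Delta(x^ny^m)=\sum_{i=0}^m\binom{m}{i}_q\alpha_{n,i}\alpha_{n+i,m-i}\, x^ny^i\ot x^{n+i}y^{m-i}.$$
Since $\{x^ny^i\ot x^{n+i}y^{m-i}\}$ are linearly independent in $H\ot H$, the two sides agree precisely when $\alpha_{n,m}\binom{m}{i}_q=\alpha_{n,i}\alpha_{n+i,m-i}\binom{m}{i}_q$ for each $i$; as $\binom{m}{i}_q\neq0$ (since $q$ is not a root of unity), this is exactly the hypothesis. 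Hence $\phi$ is a coalgebra endomorphism.

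For the converse, assume $\phi$ as defined is a coalgebra automorphism. Running the same two computations of $\Delta\phi(x^ny^m)$ and $(\phi\ot\phi)\Delta(x^ny^m)$ and equating coefficients of the linearly independent basis vectors $x^ny^i\ot x^{n+i}y^{m-i}$ in $H\ot H$, and again cancelling the nonzero factor $\binom{m}{i}_q$, yields $\alpha_{n,m}=\alpha_{n,i}\alpha_{n+i,m-i}$ for all $0\leqslant i\leqslant m$. (This is essentially the coefficient comparison already carried out in the proof of Lemma~\ref{firstitem}.)

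Finally I must explain why a coalgebra endomorphism of this form is automatically an automorphism, so that the statement as phrased is correct. Since each $\alpha_{n,m}\in k^\times$, the linear map $\psi$ defined by $\psi(x^ny^m)=\alpha_{n,m}^{-1}x^ny^m$ is a two-sided inverse of $\phi$ as a linear map; and from the multiplicativity of the $\alpha$'s one checks that the $\alpha_{n,m}^{-1}$ satisfy the same condition, so $\psi$ is again a coalgebra map by the forward direction already proved. Hence $\phi$ is bijective with coalgebra-map inverse, i.e. a coalgebra automorphism. I do not anticipate a serious obstacle here: the only point requiring care is the linear independence of $\{x^ny^i\ot x^{n+i}y^{m-i}:0\leqslant i\leqslant m\}$ in $H\ot H$, which follows immediately from Lemma~\ref{basic}(1) since these are distinct elements of the tensor-product basis $\{x^ay^b\ot x^cy^d\}$; and the bookkeeping of the edge cases $i=0$ and $i=m$, where $\alpha_{n,0}=\alpha_{n+m,0}=1$ makes the condition degenerate into a tautology.
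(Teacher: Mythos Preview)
Your proposal is correct and follows essentially the same approach as the paper: expand $\Delta\phi(x^ny^m)$ and $(\phi\ot\phi)\Delta(x^ny^m)$ via Lemma~\ref{basic}(2), compare coefficients using linear independence and the nonvanishing of $\binom{m}{i}_q$, and derive $\a_{n,0}=1$ from the case $m=i=0$. Your final paragraph verifying that $\psi$ is again a coalgebra map is more than needed---the paper simply notes that $\phi$ is obviously a linear automorphism, since the inverse of a bijective coalgebra map is automatically a coalgebra map---but this does no harm.
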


\begin{proof}
Obviously, $\phi$ is a linear automorphism of $H$. Moreover, $\varepsilon\circ\phi=\varepsilon$
if and only if $\a_{n,0}=1$ for all $n\in\mathbb Z$.

Suppose that $\phi$ is a coalgebra map. Then by the proof of Lemma \ref{firstitem}, one knows that
$\a_{n,m}=\a_{n,i}\a_{n+i,m-i}$ for all $n\in\mathbb{Z}$ and $0\leqslant i\leqslant m$.
Conversely, suppose that $\a_{n,m}=\a_{n,i}\a_{n+i,m-i}$ for all
$n\in\mathbb{Z}$ and $0\leqslant i\leqslant m$.
By taking $i=m=0$, one gets that $\a_{n,0}=(\a_{n,0})^2$, and so
$\a_{n,0}=1$. Hence $\varepsilon\circ\phi=\varepsilon$.
Now let $n\in\mathbb Z$ and $m\in\mathbb N$. Then we have
$$\begin{array}{rcl}
(\phi\ot\phi)\Delta(x^ny^m)&=&
(\phi\ot\phi)(\sum\limits_{i=0}^m\left(\begin{array}{c}
m\\
i\\
\end{array}\right)_qx^ny^i\ot x^{n+i}y^{m-i})\\
&=&\sum\limits_{i=0}^m\left(\begin{array}{c}
m\\
i\\
\end{array}\right)_q\phi(x^ny^i)\ot\phi(x^{n+i}y^{m-i})\\
&=&\sum\limits_{i=0}^m\left(\begin{array}{c}
m\\
i\\
\end{array}\right)_q\a_{n,i}\a_{n+i,m-i}x^ny^i\ot x^{n+i}y^{m-i}.\\
&=&\a_{n,m}
\sum\limits_{i=0}^m\left(\begin{array}{c}
m\\
i\\
\end{array}\right)_qx^ny^i\ot x^{n+i}y^{m-i}\\
&=&\a_{n,m}\Delta(x^ny^m)=\Delta\phi(x^ny^m).\\
\end{array}$$
Thus, $\phi$ is a coalgebra map.
\end{proof}

Now let $\{\a_{n,m}\in k^{\ti}|n\in\mathbb{Z}, m\in\mathbb{N}\}$
be a family of nonzero scales satisfying $\a_{n,m}=\a_{n,i}\a_{n+i,m-i}$
for all $n\in\mathbb{Z}$ and $0\leqslant i\leqslant m$.
Then by the proofs of Lemmas \ref{firstitem} and \ref{s-n-cond}, one knows that
$\a_{n,0}=1$ and
$$\a_{n,m}=\a_{n,1}\a_{n+1,1}\cdots\a_{n+m-1,1}
=\prod\limits_{i=0}^{m-1}\a_{n+i,1},\ n\in\mathbb{Z}, m\geqslant 1.$$
Putting $\a_n=\a_{n,1}$, $n\in\mathbb{Z}$. Then $(\a_n)_{n\in\mathbb{Z}}\in(k^{\times})^{\mathbb Z}$
and $\a_{n,m}=\prod_{i=0}^{m-1}\a_{n+i}$,
where $n\in\mathbb{Z}$ and $m\geqslant 1$.

Conversely, given an element $\a=(\a_n)_{n\in\mathbb{Z}}\in(k^{\times})^{\mathbb Z}$.
Putting $\a_{n,0}=1$ and $\a_{n,m}=\prod_{i=0}^{m-1}\a_{n+i}$ for all
$n\in\mathbb{Z}$ and $m\geqslant 1$. Then one can easily check that
$\a_{n,m}=\a_{n,i}\a_{n+i,m-i}$ for all $n\in\mathbb{Z}$ and $0\leqslant i\leqslant m$.

Summarizing the above discussion, there is a 1-1 correspondence between
the groups $Aut_0^{gr}(H)$ and $(k^{\ti})^{\mathbb Z}$.
For an element $\a=(\a_n)_{n\in\mathbb{Z}}\in (k^{\ti})^{\mathbb Z}$, the corresponding
element $\phi_{\a}$ in $Aut_0^{gr}(H)$ is determined by
$$\hspace{1cm}\phi_{\a}(x^n)=x^n,\ \phi_{\a}(x^ny)=\a_nx^ny,\ \phi_{\a}(x^ny^m)
=(\prod\limits_{i=0}^{m-1}\a_{n+i})x^ny^m,\hspace{1cm}(2.4)$$
where $n\in\mathbb{Z}$ and $m\geqslant 2$.
Obviously, the above correspondence is a homomorphism of groups. Thus, we have proven
the following theorem.

\begin{theorem}\label{iso-gr-0}
$Aut_0^{gr}(H)$ and $(k^{\ti})^{\mathbb Z}$ are isomorphic groups. Precisely, the map
$$(k^{\ti})^{\mathbb Z}\ra Aut_0^{gr}(H),\ \a=(\a_n)_{n\in\mathbb{Z}}\mapsto \phi_{\a}$$
is a group isomorphism, where $\phi_{\a}$ is given by Eq.(2.4).
\end{theorem}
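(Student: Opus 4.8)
The plan is to package the lemmas proved above into the three ingredients of a group isomorphism: (i) the assignment $\alpha\mapsto\phi_\alpha$ is well defined with image in $Aut_0^{gr}(H)$, (ii) it is bijective, and (iii) it is a homomorphism. Essentially all of the analytic work is already done; what remains is assembly.

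First I would verify (i). Given $\alpha=(\alpha_n)_{n\in\mathbb Z}\in(k^{\times})^{\mathbb Z}$, put $\alpha_{n,0}=1$ and $\alpha_{n,m}=\prod_{i=0}^{m-1}\alpha_{n+i}$ for $m\geqslant 1$; a one-line telescoping check gives $\alpha_{n,m}=\alpha_{n,i}\alpha_{n+i,m-i}$ for all $0\leqslant i\leqslant m$. By Lemma~\ref{s-n-cond}, the linear map $\phi_\alpha$ determined by $\phi_\alpha(x^ny^m)=\alpha_{n,m}x^ny^m$ is then a coalgebra automorphism of $H$; it clearly carries each $H(n)=H_0y^n$ into itself and fixes $1$, so $\phi_\alpha\in Aut_0^{gr}(H)$. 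The formulas displayed in Eq.(2.4) are just the cases $m=0$, $m=1$, $m\geqslant 2$ of this description.

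Next, surjectivity. Let $\phi\in Aut_0^{gr}(H)$. By Lemma~\ref{auto01} (with $r=0$, since $\phi(1)=1$) we get $\phi(x^n)=x^n$ and $\phi(x^ny)=\alpha_{n,1}x^ny+\beta_n(x^{n+1}-x^n)$ with $\alpha_{n,1}\in k^{\times}$; as $\phi$ preserves $H(1)=H_0y$ and $\beta_n(x^{n+1}-x^n)\in H(0)$, that summand must vanish, so $\phi(x^ny)=\alpha_{n,1}x^ny$. For $m\geqslant 2$, Lemma~\ref{firstitem} writes $\phi(x^ny^m)=\alpha_{n,m}x^ny^m+h_{n,m}$ with $\alpha_{n,m}\in k^{\times}$ and $h_{n,m}\in\sum_{i=0}^{m-1}H(i)$, and gradedness again forces $h_{n,m}=0$. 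Thus $\phi$ is diagonal on the basis $\{x^ny^m\}$ with nonzero scalars $\{\alpha_{n,m}\}$, and Lemma~\ref{s-n-cond} gives $\alpha_{n,m}=\alpha_{n,i}\alpha_{n+i,m-i}$; iterating with $i=1$ yields $\alpha_{n,m}=\prod_{i=0}^{m-1}\alpha_{n+i}$ with $\alpha_n:=\alpha_{n,1}$. Hence $\phi=\phi_\alpha$ for $\alpha=(\alpha_n)_{n\in\mathbb Z}\in(k^{\times})^{\mathbb Z}$. Injectivity is immediate: if $\phi_\alpha=\phi_\beta$, evaluating on $x^ny$ gives $\alpha_n=\beta_n$ for every $n$.

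Finally, the homomorphism property: for $\alpha,\beta\in(k^{\times})^{\mathbb Z}$ and any $n\in\mathbb Z$, $m\in\mathbb N$,
$$(\phi_\alpha\phi_\beta)(x^ny^m)=\phi_\alpha\bigl({\textstyle\prod_{i=0}^{m-1}\beta_{n+i}}\,x^ny^m\bigr)={\textstyle\prod_{i=0}^{m-1}(\alpha_{n+i}\beta_{n+i})}\,x^ny^m=\phi_{\alpha\beta}(x^ny^m),$$
using $(\alpha\beta)_j=\alpha_j\beta_j$; since $\{x^ny^m\}$ is a $k$-basis of $H$ this gives $\phi_\alpha\phi_\beta=\phi_{\alpha\beta}$. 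I do not expect a genuine obstacle: every step is bookkeeping drawn from Lemmas~\ref{auto01}, \ref{firstitem} and \ref{s-n-cond}. The one point that deserves a word of care is recognizing that gradedness is exactly what upgrades the "leading term $+$ lower order" description of Lemma~\ref{firstitem} to the clean diagonal form $\phi(x^ny^m)=\alpha_{n,m}x^ny^m$, which is what makes the correspondence with $(k^{\times})^{\mathbb Z}$ exact rather than merely valid up to lower-order corrections.
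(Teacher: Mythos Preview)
Your proposal is correct and follows essentially the same route as the paper: the paper also derives the diagonal form $\phi(x^ny^m)=\alpha_{n,m}x^ny^m$ for $\phi\in Aut_0^{gr}(H)$ from Lemmas~\ref{auto01} and \ref{firstitem}, invokes Lemma~\ref{s-n-cond} to reduce the data to the sequence $(\alpha_{n,1})_{n\in\mathbb Z}$, and then records that the resulting bijection with $(k^{\times})^{\mathbb Z}$ is a group homomorphism. Your write-up is simply more explicit about the gradedness step (forcing $\beta_n=0$ and $h_{n,m}=0$) and the homomorphism verification, both of which the paper leaves to the reader.
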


We have already known that the subgroup $\Theta$ of $Aut_c^{gr}(H)$ is isomorphic to the
additive group $\mathbb Z$. By Corollary \ref{gr-semi}, it follows that
$Aut_c^{gr}(H)$ is the internal semidirect product of $Aut_0^{gr}(H)$ and $\Theta$.
Therefore, there is an action of $\mathbb Z$ on $(k^{\ti})^{\mathbb Z}$ such that
the group $Aut_c^{gr}(H)$ is isomorphic to the corresponding semidirect product group
$(k^{\ti})^{\mathbb Z}\rtimes\mathbb{Z}$.

For $r\in\mathbb{Z}$ and $\a=(\a_n)_{n\in\mathbb{Z}}\in(k^{\ti})^{\mathbb Z}$,
define $r\cdot\a:=\a[-r]\in(k^{\ti})^{\mathbb Z}$, i.e.,
$(r\cdot\a)_n=\a_{n-r}$, $n\in\mathbb{Z}$. Obviously, the map
$(k^{\ti})^{\mathbb Z}\ra(k^{\ti})^{\mathbb Z}$, $\a\mapsto r\cdot\a$ is a group automorphism
of $(k^{\ti})^{\mathbb Z}$, and $(r+t)\cdot\a=r\cdot(t\cdot\a)$, $r, t\in\mathbb{Z}$,
$\a\in(k^{\ti})^{\mathbb Z}$. Hence one can form a semidirect group
$(k^{\ti})^{\mathbb Z}\rtimes\mathbb{Z}$ as follows:
$(k^{\ti})^{\mathbb Z}\rtimes\mathbb{Z}=\{(\a, r)|\a\in(k^{\ti})^{\mathbb Z}, r\in\mathbb{Z}\}$ as a set;
the multiplicative operation is defined by
$$(\a, r)(\b, t)=(\a(r\cdot\b), r+t),\ \a, \b\in(k^{\ti})^{\mathbb Z},\ r, t\in\mathbb{Z}.$$

\begin{theorem}\label{iso-gr}
$Aut_c^{gr}(H)$ is isomorphic to the semidirect product group $(k^{\ti})^{\mathbb Z}\rtimes\mathbb{Z}$.
Precisely, the map
$$\Psi: (k^{\ti})^{\mathbb Z}\rtimes\mathbb{Z}\ra Aut_c^{gr}(H),\ \Psi(\a, r)=\phi_{\a}\theta_r$$
is a group isomorphism, where $\theta_r$ and $\phi_{\a}$ are given as in Lemma \ref{Zpart} and
Theorem \ref{iso-gr-0}, respectively.
\end{theorem}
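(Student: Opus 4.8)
The plan is to verify directly that the map $\Psi(\a, r) = \phi_{\a}\theta_r$ is a well-defined bijective group homomorphism from the semidirect product group $(k^{\ti})^{\mathbb Z}\rtimes\mathbb{Z}$ onto $Aut_c^{gr}(H)$. By Corollary~\ref{gr-semi}, every element of $Aut_c^{gr}(H)$ can be written uniquely as a product $\psi\theta_r$ with $\psi\in Aut_0^{gr}(H)$ and $\theta_r\in\Theta$; by Theorem~\ref{iso-gr-0}, $\psi = \phi_{\a}$ for a unique $\a\in(k^{\ti})^{\mathbb Z}$. This immediately shows $\Psi$ is a bijection, so the real content is checking that $\Psi$ is multiplicative, i.e. that $\phi_{\a}\theta_r\,\phi_{\b}\theta_t = \phi_{\a(r\cdot\b)}\theta_{r+t}$.

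First I would establish the key conjugation identity $\theta_r\phi_{\b}\theta_r^{-1} = \phi_{r\cdot\b}$, where $r\cdot\b = \b[-r]$. This is a routine computation on the basis $\{x^ny^m\}$: using Lemma~\ref{Zpart} and Eq.(2.4), one has $\theta_r^{-1}(x^ny^m) = x^{n-r}y^m$, then $\phi_{\b}$ multiplies by $\prod_{i=0}^{m-1}\b_{n-r+i}$, and then $\theta_r$ shifts back to $x^ny^m$; the resulting scalar is $\prod_{i=0}^{m-1}\b_{(n-r)+i} = \prod_{i=0}^{m-1}(r\cdot\b)_{n+i}$, which is exactly the scalar defining $\phi_{r\cdot\b}(x^ny^m)$. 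Second, I would note that $\phi_{\a}\phi_{\a'} = \phi_{\a\a'}$ follows directly from Eq.(2.4), since the scalars multiply componentwise: $(\prod_{i}\a_{n+i})(\prod_i\a'_{n+i}) = \prod_i(\a\a')_{n+i}$.

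With these two facts in hand, the computation closes: $\phi_{\a}\theta_r\phi_{\b}\theta_t = \phi_{\a}(\theta_r\phi_{\b}\theta_r^{-1})\theta_r\theta_t = \phi_{\a}\phi_{r\cdot\b}\theta_{r+t} = \phi_{\a(r\cdot\b)}\theta_{r+t} = \Psi(\a(r\cdot\b), r+t) = \Psi((\a,r)(\b,t))$, using also $\theta_r\theta_t = \theta_{r+t}$ from the paragraph preceding Proposition~\ref{Autosemi}. I would also remark that $\Psi$ sends the identity $(\mathbf{1}, 0)$ (where $\mathbf{1}$ is the constant sequence $1$) to $\phi_{\mathbf 1}\theta_0 = \mathrm{id}$, which is automatic once multiplicativity and bijectivity are known. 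I do not anticipate a genuine obstacle here; the only mild subtlety is bookkeeping the index shift in $r\cdot\b = \b[-r]$ versus $(r\cdot\b)_n = \b_{n-r}$ correctly and consistently with the product formula $\phi_{\b}(x^ny^m) = (\prod_{i=0}^{m-1}\b_{n+i})x^ny^m$, so that the shifted product $\prod_{i=0}^{m-1}\b_{n-r+i}$ genuinely matches $\phi_{r\cdot\b}$ rather than $\phi_{(-r)\cdot\b}$. Everything else is a direct translation of the already-established semidirect product decomposition into the explicit parametrization.
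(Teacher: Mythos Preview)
Your proposal is correct and follows essentially the same approach as the paper: bijectivity from Corollary~\ref{gr-semi} and Theorem~\ref{iso-gr-0}, then the conjugation identity $\theta_r\phi_{\b}\theta_r^{-1}=\phi_{r\cdot\b}$ combined with $\phi_{\a}\phi_{\b}=\phi_{\a\b}$ and $\theta_r\theta_t=\theta_{r+t}$ to verify multiplicativity. The only cosmetic difference is that the paper checks the conjugation identity just on $x^ny$ and then invokes the injectivity of $\a\mapsto\phi_{\a}$ from Theorem~\ref{iso-gr-0}, whereas you verify it on all $x^ny^m$ directly; both are valid.
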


\begin{proof}
By Lemma \ref{Zpart}, Corollary \ref{gr-semi} and Theorem \ref{iso-gr-0}, one knows that $\Psi$ is a bijective map.
Let $\a, \b\in(k^{\ti})^{\mathbb Z}$ and $r, t\in\mathbb{Z}$. Then
$$\Psi(\a, r)\Psi(\b, t)=\phi_{\a}\theta_r\phi_{\b}\theta_t
=\phi_{\a}(\theta_r\phi_{\b}\theta_{-r})\theta_r\theta_t
=\phi_{\a}(\theta_r\phi_{\b}\theta_{-r})\theta_{r+t}.$$
It follows from Corollary \ref{gr-semi} that $\theta_r\phi_{\b}\theta_{-r}\in Aut_0^{gr}(H)$.
Now for any $n\in\mathbb{Z}$, we have
$$\begin{array}{rcl}
(\theta_r\phi_{\b}\theta_{-r})(x^ny)&=&(\theta_r\phi_{\b})(x^{n-r}y)=\theta_r(\b_{n-r}x^{n-r}y)\\
&=&\b_{n-r}x^ny=(r\cdot\b)_nx^ny=\phi_{r\cdot\b}(x^ny).\\
\end{array}$$
Again by Theorem \ref{iso-gr-0}, one knows that $\theta_r\phi_{\b}\theta_{-r}=\phi_{r\cdot\b}$.
It follows that
$$\begin{array}{rcl}
\Psi(\a, r)\Psi(\b, t)&=&\phi_{\a}\phi_{r\cdot\b}\theta_{r+t}=\phi_{\a(r\cdot\b)}\theta_{r+t}\\
&=&\Psi(\a(r\cdot\b), r+t)=\Psi((\a, r)(\b, t)).\\
\end{array}$$
Thus, $\Psi$ is a group homomorphism, and consequently $\Psi$ is a group isomorphism.
\end{proof}

\section{\bf The Coalgebra Automorphisms of $k_q[x, x^{-1}, y]$}\selabel{3}

In this section, we consider the coalgebra automorphism group of $k_q[x, x^{-1}, y]$.
We will use the notations in the last section. From Proposition \ref{Autosemi}, we only need
to consider the normal subgroup $Aut_0(H)$ of $Aut_c(H)$ in order to describe the structure of
$Aut_c(H)$.

For any $m\>1$, let
$$Aut_m(H)=\{\phi\in Aut_c(H)\mid\phi(h)=h\ \mbox{ for all }\ h\in\sum_{i=0}^mH(i)\}.$$
Then $Aut_m(H)$ is obviously a subgroup of $Aut_c(H)$.

\begin{lemma}\label{serial}
Each $Aut_m(H)$ is a normal subgroup of $Aut_c(H)$, where $m\>1$. Moreover,
$Aut_0(H)\supseteq Aut_1(H)\supseteq Aut_2(H)\supseteq\cdots$.
\end{lemma}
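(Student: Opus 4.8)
The plan is to prove two things: that $Aut_0(H)\supseteq Aut_1(H)\supseteq Aut_2(H)\supseteq\cdots$, and that each $Aut_m(H)$ is normal in $Aut_c(H)$. The chain of inclusions is essentially immediate: if $\phi\in Aut_m(H)$, then $\phi$ fixes $\sum_{i=0}^mH(i)$ pointwise, hence in particular it fixes $\sum_{i=0}^{m-1}H(i)$ pointwise, so $\phi\in Aut_{m-1}(H)$; and since $\phi$ fixes $H(0)=H_0$ pointwise, it fixes $1$, so $\phi\in Aut_0(H)$. This handles the ``Moreover'' part.

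For normality, let $\phi\in Aut_c(H)$ and $\psi\in Aut_m(H)$; I must show $\phi\psi\phi^{-1}\in Aut_m(H)$, i.e. that $\phi\psi\phi^{-1}$ fixes every element of $\sum_{i=0}^mH(i)$. The key observation is that the filtration $\{\sum_{i=0}^mH(i)\}_{m\ge0}$ is intrinsic to the coalgebra structure once we know its behaviour under coalgebra automorphisms. Concretely, by Lemma~\ref{auto01} there is an integer $r$ with $\phi(x^n)=x^{n+r}$, so $\phi=\phi_0\theta_r$ where $\phi_0:=\phi\theta_{-r}\in Aut_0(H)$ (as in the proof of Proposition~\ref{Autosemi}). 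Since $\theta_r$ is graded (it preserves each $H(n)$, hence each $\sum_{i=0}^mH(i)$) and $\theta_r$ commutes with $\psi$ on lower degrees, it suffices to treat the case $\phi\in Aut_0(H)$: indeed $\theta_r\psi\theta_{-r}$ fixes $\sum_{i=0}^mH(i)$ because $\theta_{\pm r}$ preserve this subspace and $\theta_{-r}$ followed by $\psi$ followed by $\theta_r$ returns each graded piece to itself after $\psi$ acts trivially on it --- more carefully, one checks on the basis $x^ny^j$ with $j\le m$ that $\theta_r\psi\theta_{-r}(x^ny^j)=\theta_r\psi(x^{n-r}y^j)=\theta_r(x^{n-r}y^j)=x^ny^j$. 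So we may assume $\phi\in Aut_0(H)$.

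Now with $\phi\in Aut_0(H)$, use Lemmas~\ref{inclu} and~\ref{notinclu}: $\phi$ and $\phi^{-1}$ both satisfy $\phi(\sum_{i=0}^mH(i))\subseteq\sum_{i=0}^mH(i)$ for all $m$, and moreover $\phi$ restricts to a linear automorphism of the finite-dimensional-in-each-graded-layer filtered space --- in particular $\phi(\sum_{i=0}^mH(i))=\sum_{i=0}^mH(i)$ since $\phi^{-1}$ preserves it too, giving equality by the usual inclusion argument (as in the proof of Lemma~\ref{notinclu}). Hence, given $h\in\sum_{i=0}^mH(i)$, write $h=\phi(h')$ with $h'\in\sum_{i=0}^mH(i)$; then $(\phi\psi\phi^{-1})(h)=(\phi\psi)(h')=\phi(h')=h$, using $\psi\in Aut_m(H)$ on $h'\in\sum_{i=0}^mH(i)$. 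Therefore $\phi\psi\phi^{-1}$ fixes $\sum_{i=0}^mH(i)$ pointwise, so $\phi\psi\phi^{-1}\in Aut_m(H)$, proving normality.

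The only mild obstacle is bookkeeping in reducing the general $\phi\in Aut_c(H)$ to the $Aut_0(H)$ case; once that reduction is in place, the argument is just ``a subspace preserved by $\phi$ and fixed pointwise by $\psi$ is fixed pointwise by $\phi\psi\phi^{-1}$,'' and the preservation of $\sum_{i=0}^mH(i)$ by elements of $Aut_0(H)$ is exactly Lemmas~\ref{inclu}--\ref{notinclu}. I expect the write-up to be short.
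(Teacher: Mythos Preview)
Your proof is correct and follows essentially the same approach as the paper's: both arguments hinge on the fact that every element of $Aut_c(H)$ preserves the filtration piece $\sum_{i=0}^m H(i)$, after which normality is the one-line computation ``conjugate, apply $\psi$ trivially, unconjugate.'' The paper is slightly more streamlined in that it invokes Proposition~\ref{Autosemi} and Lemma~\ref{inclu} together to get preservation of the filtration for a general $\psi\in Aut_c(H)$ directly, rather than first reducing explicitly to $Aut_0(H)$ via the $\theta_r$ factor as you do; and it only uses the inclusion $\psi(\sum_{i=0}^m H(i))\subseteq\sum_{i=0}^m H(i)$ rather than the equality you establish (though both are available). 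These are presentational differences, not substantive ones.
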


\begin{proof}
Let $m\>1$, $\phi\in Aut_m(H)$ and $\psi\in Aut_c(H)$. Then for any $h\in\sum_{i=0}^mH(i)$, by
Proposition \ref{Autosemi} and Lemma \ref{inclu}, we have
$\psi(h)\in\sum_{i=0}^mH(i)$. Hence $(\psi^{-1}\phi\psi)(h)=\psi^{-1}(\phi(\psi(h)))=\psi^{-1}(\psi(h))=h$
for any $h\in\sum_{i=0}^mH(i)$. This shows that $\psi^{-1}\phi\psi\in Aut_m(H)$, and consequently
$Aut_m(H)$ is a normal subgroup of $Aut_c(H)$. Obviously,
$Aut_0(H)\supseteq Aut_1(H)\supseteq Aut_2(H)\supseteq\cdots$.
\end{proof}

Let $\phi\in Aut_0(H)$. Then by Lemma \ref{auto01}, there are $(\a_n)_{n\in\mathbb Z}\in(k^{\times})^{\mathbb Z}$
and $(\b_n)_{n\in\mathbb Z}\in k^{\mathbb Z}$ such that $\phi(x^ny)=\a_nx^ny+\b_n(x^{n+1}-x^n)$ for all $n\in\mathbb Z$.
Let
$$Aut_*(H)=\{\phi\in Aut_0(H)\mid\phi(x^ny)=x^ny+\b_n(x^{n+1}-x^n), \b_n\in k,
n\in{\mathbb Z}\}.$$
Then $Aut_*(H)$ is obviously a subgroup of $Aut_c(H)$, and $Aut_1(H)\subseteq Aut_*(H)$.

\begin{lemma}\label{Semidirect}
$Aut_*(H)$ is a normal subgroup of $Aut_c(H)$. Moreover, $Aut_c(H)$ is the internal
semidirect product of $Aut_*(H)$ and $Aut^{gr}_c(H)$. Consequently,
$Aut_0(H)$ is the internal
semidirect product of $Aut_*(H)$ and $Aut^{gr}_0(H)$.
\end{lemma}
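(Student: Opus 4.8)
The plan is to verify the three assertions in turn: that $Aut_*(H)$ is normal in $Aut_c(H)$, that $Aut_c(H) = Aut_*(H) \cdot Aut^{gr}_c(H)$ with trivial intersection, and finally to deduce the corresponding statement for $Aut_0(H)$ by restriction. The unifying tool throughout is Lemma~\ref{firstitem}, which says that every $\phi\in Aut_0(H)$ acts on $x^ny^m$ as multiplication by a nonzero scalar $\a_{n,m}$ on the top-degree piece, plus lower-degree terms; in particular $\phi\in Aut_*(H)$ exactly when all the degree-one scalars $\a_{n,1}=1$, which by the "Moreover" clause of Lemma~\ref{firstitem} forces $\a_{n,m}=1$ for all $n,m$. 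So $Aut_*(H)$ is precisely the set of $\phi\in Aut_0(H)$ acting as the identity on each graded quotient $H(m)$ — i.e.\ the "unipotent" part.

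First I would prove normality. Since $Aut_*(H)\subseteq Aut_0(H)$ and the latter is normal in $Aut_c(H)$ (Proposition~\ref{Autosemi}), it suffices to show $Aut_*(H)$ is stable under conjugation by elements of $Aut_0(H)$ together with conjugation by the $\theta_r$. For $\psi\in Aut_0(H)$ and $\phi\in Aut_*(H)$: by Lemma~\ref{inclu} every element of $Aut_0(H)$ preserves the filtration $\sum_{i=0}^m H(i)$, and by Lemma~\ref{firstitem} the induced map on the quotient $H(m)\cong\big(\sum_{i=0}^m H(i)\big)/\big(\sum_{i=0}^{m-1}H(i)\big)$ is scalar multiplication by the family $\a_{n,m}$; since $\phi$ induces the identity on each such quotient and the induced-map assignment is multiplicative, $\psi\phi\psi^{-1}$ also induces the identity, hence lies in $Aut_*(H)$. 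Conjugation by $\theta_r$ just shifts the index $n$, so it clearly preserves the defining condition "$\phi(x^ny)=x^ny+\b_n(x^{n+1}-x^n)$". This gives normality in $Aut_c(H)$.

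Next, the semidirect product decomposition. Trivial intersection is the easy half: if $\phi\in Aut_*(H)\cap Aut^{gr}_c(H)$, then $\phi\in Aut_0^{gr}(H)$, so by Theorem~\ref{iso-gr-0} it is $\phi_{\a}$ for some $\a\in(k^\times)^{\mathbb Z}$, and the condition $\phi\in Aut_*(H)$ forces $\a_n=1$ for all $n$, i.e.\ $\phi=\mathrm{id}$. For the product $Aut_c(H)=Aut_*(H)\cdot Aut^{gr}_c(H)$: given $\phi\in Aut_c(H)$, first use Proposition~\ref{Autosemi} to write $\phi = \psi\theta_r$ with $\psi\in Aut_0(H)$, so it is enough to factor an arbitrary $\psi\in Aut_0(H)$ as $(\text{element of }Aut_*(H))\cdot(\text{element of }Aut_0^{gr}(H))$. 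Let $\a=(\a_n)_{n\in\mathbb Z}\in(k^\times)^{\mathbb Z}$ be the degree-one scalars of $\psi$ from Lemma~\ref{auto01}, and set $\phi_{\a}\in Aut_0^{gr}(H)$ as in Theorem~\ref{iso-gr-0}. The claim is that $\psi\phi_{\a}^{-1}\in Aut_*(H)$: indeed $\psi\phi_{\a}^{-1}\in Aut_0(H)$, and on $x^ny$ we compute $\phi_{\a}^{-1}(x^ny)=\a_n^{-1}x^ny$, so $(\psi\phi_{\a}^{-1})(x^ny)=\psi(\a_n^{-1}x^ny)=\a_n^{-1}(\a_nx^ny+\b_n(x^{n+1}-x^n))=x^ny+\a_n^{-1}\b_n(x^{n+1}-x^n)$, which has the required form. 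Hence $\psi\in Aut_*(H)\cdot Aut_0^{gr}(H)\subseteq Aut_*(H)\cdot Aut^{gr}_c(H)$, and combined with normality of $Aut_*(H)$ this establishes the internal semidirect product $Aut_c(H)=Aut_*(H)\rtimes Aut^{gr}_c(H)$.

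Finally, the statement for $Aut_0(H)$ follows by intersecting with $Aut_0(H)$: we have $Aut_*(H)\subseteq Aut_0(H)$, $Aut_0^{gr}(H)=Aut^{gr}_c(H)\cap Aut_0(H)\subseteq Aut_0(H)$, the product $Aut_*(H)\cdot Aut_0^{gr}(H)$ already equals $Aut_0(H)$ by the factorization in the previous paragraph, $Aut_*(H)$ is normal in $Aut_0(H)$ (being normal in the larger group $Aut_c(H)$), and the intersection $Aut_*(H)\cap Aut_0^{gr}(H)=\{\mathrm{id}\}$ was checked above. I expect the main obstacle to be bookkeeping rather than conceptual: one must be careful that "acts as the identity on every graded quotient" is genuinely equivalent to the defining condition of $Aut_*(H)$ — this is exactly where the second half of Lemma~\ref{firstitem} (propagating $\a_{n,1}=1$ up to all $\a_{n,m}=1$) is indispensable — and one should state the factorization $\psi=(\psi\phi_\a^{-1})\phi_\a$ in the correct order so that normality of $Aut_*(H)$ delivers a genuine internal semidirect product.
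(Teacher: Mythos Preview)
Your proof is correct. The factorization step (write $\psi=(\psi\phi_{\a}^{-1})\phi_{\a}$ and verify $\psi\phi_{\a}^{-1}\in Aut_*(H)$ by computing on $x^ny$) and the trivial-intersection check match the paper's argument essentially verbatim. Where you diverge is in the proof of normality: the paper simply computes $(\psi^{-1}\phi\psi)(x^ny)$ explicitly for an arbitrary $\psi\in Aut_c(H)$ using only Lemma~\ref{auto01}, whereas you first recast $Aut_*(H)$, via the ``Moreover'' clause of Lemma~\ref{firstitem}, as the subgroup of $Aut_0(H)$ acting trivially on every filtration quotient $\big(\sum_{i\le m}H(i)\big)/\big(\sum_{i<m}H(i)\big)$, and then use functoriality of the associated graded to see that this property is automatically conjugation-stable within $Aut_0(H)$. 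Your route is more conceptual---it explains structurally why $Aut_*(H)$ is normal (it is the ``unipotent radical'' relative to the coradical filtration)---while the paper's route is lighter on prerequisites, needing only the degree-one information from Lemma~\ref{auto01} rather than the full propagation $\a_{n,1}=1\Rightarrow\a_{n,m}=1$. Both are valid; your characterization has the advantage of being reusable later in the paper when one wants to know how $Aut_*(H)$ sits inside $Aut_0(H)$.
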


\begin{proof}
Let $\phi\in Aut_*(H)$ and $\psi\in Aut_c(H)$. Then by Lemma \ref{auto01},
there is an $r\in\mathbb Z$ such that
$$\phi(x^ny)=x^ny+\b_n(x^{n+1}-x^n),\ \psi(x^ny)=\a_nx^{n+r}y+\g_n(x^{n+r+1}-x^{n+r}),$$
where $n\in\mathbb Z$, $\a_n\in k^{\times}$ and $\b_n, \g_n\in k$. In this case,
$\psi^{-1}(x^{n+r}y)=\a_n^{-1}x^ny-\a_n^{-1}\g_n(x^{n+1}-x^n)$. Hence
$$\begin{array}{rcl}
(\psi^{-1}\phi\psi)(x^ny)&=&(\psi^{-1}\phi)(\a_nx^{n+r}y+\g_n(x^{n+r+1}-x^{n+r}))\\
&=&\psi^{-1}(\a_nx^{n+r}y+(\a_n\b_{n+r}+\g_n)(x^{n+r+1}-x^{n+r}))\\
&=&x^ny+\a_n\b_{n+r}(x^{n+1}-x^n))\\
\end{array}$$
for all $n\in\mathbb Z$. This shows that $Aut_*(H)$ is a normal subgroup of $Aut_c(H)$.

Let $\a=(\a_n)_{n\in\mathbb Z}\in(k^{\times})^{\mathbb Z}$ with $\a_n$ given above. Then
$\phi_{\a}\in Aut^{gr}_0(H)\subseteq Aut^{gr}_c(H)$ as stated in \seref{2}.
In this case, $(\phi_{\a}^{-1}\theta_{-r}\psi)(x^ny)=x^ny+\g_n(x^{n+1}-x^n)$,
and hence $\phi_{\a}^{-1}\theta_{-r}\psi\in Aut_*(H)$. Thus,
$\psi=(\theta_r\phi_{\a})(\phi_{\a}^{-1}\theta_{-r}\psi)\in Aut^{gr}_c(H)Aut_*(H)$,
and so $Aut_c(H)=Aut^{gr}_c(H)Aut_*(H)$. Similarly, one can show that
$Aut_c(H)=Aut_*(H)Aut^{gr}_c(H)$. From Theorem \ref{iso-gr-0}, it is easy to see that
$Aut_*(H)\cap Aut_c^{gr}(H)=\{{\rm id}_H\}$. It follows that $Aut_c(H)$ is the internal
semidirect product of $Aut_*(H)$ and $Aut^{gr}_c(H)$.  Consequently,
$Aut_0(H)$ is the internal
semidirect product of $Aut_*(H)$ and $Aut^{gr}_0(H)$.
\end{proof}

\begin{corollary}\label{Iso0}
There are group isomorphisms
$$Aut_c(H)/Aut_*(H)\cong Aut_c^{gr}(H)\cong (k^{\times})^{\mathbb Z}\rtimes\mathbb{Z}$$
and
$$Aut_0(H)/Aut_*(H)\cong Aut_0^{gr}(H)\cong (k^{\times})^{\mathbb Z}.$$
\end{corollary}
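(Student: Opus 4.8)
The plan is to read this off immediately from Lemma \ref{Semidirect} together with Theorems \ref{iso-gr} and \ref{iso-gr-0}, using only the elementary fact that if a group $G$ is the internal semidirect product of a normal subgroup $N$ and a subgroup $K$, then the restriction to $K$ of the canonical projection $G\to G/N$ is a group isomorphism onto $G/N$. Indeed, this restriction is a homomorphism with kernel $K\cap N=\{1\}$, hence injective, and it is surjective because $G=NK$. So the corollary is nothing more than the passage from the internal semidirect product decompositions already established to the corresponding quotient isomorphisms, followed by substitution of the known models of $Aut_c^{gr}(H)$ and $Aut_0^{gr}(H)$.

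First I would apply this with $G=Aut_c(H)$. By Lemma \ref{Semidirect}, $Aut_*(H)$ is a normal subgroup of $Aut_c(H)$ and $Aut_c(H)$ is the internal semidirect product of $Aut_*(H)$ and $Aut_c^{gr}(H)$; moreover $Aut_*(H)\cap Aut_c^{gr}(H)=\{{\rm id}_H\}$ as noted in that proof. Hence $Aut_c(H)/Aut_*(H)\cong Aut_c^{gr}(H)$, and composing with the isomorphism $Aut_c^{gr}(H)\cong(k^{\times})^{\mathbb Z}\rtimes\mathbb Z$ of Theorem \ref{iso-gr} gives the first displayed chain of isomorphisms.

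For the second chain I would argue in exactly the same way with $G=Aut_0(H)$. Since $Aut_*(H)\subseteq Aut_0(H)$ and $Aut_*(H)$ is normal in the larger group $Aut_c(H)$, it is a fortiori normal in $Aut_0(H)$; by the final clause of Lemma \ref{Semidirect}, $Aut_0(H)$ is the internal semidirect product of $Aut_*(H)$ and $Aut_0^{gr}(H)$, with $Aut_*(H)\cap Aut_0^{gr}(H)=\{{\rm id}_H\}$. Therefore $Aut_0(H)/Aut_*(H)\cong Aut_0^{gr}(H)$, and Theorem \ref{iso-gr-0}, which identifies $Aut_0^{gr}(H)$ with $(k^{\times})^{\mathbb Z}$, completes the second chain.

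There is essentially no obstacle here: all the content lies in the semidirect-product decompositions of Lemma \ref{Semidirect} and in Theorems \ref{iso-gr} and \ref{iso-gr-0}. The only point one should be slightly careful to cite is the intersection condition $Aut_*(H)\cap Aut_c^{gr}(H)=\{{\rm id}_H\}$ (respectively the analogue with $Aut_0^{gr}(H)$), which is what guarantees that the relevant projection is injective rather than merely surjective; but this is already recorded in the proof of Lemma \ref{Semidirect}.
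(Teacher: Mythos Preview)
Your proposal is correct and follows exactly the same approach as the paper, which simply says ``It follows from Lemma \ref{Semidirect} and Theorems \ref{iso-gr-0} and \ref{iso-gr}.'' Your explicit unpacking of the elementary semidirect-product-to-quotient isomorphism is precisely what that one-line citation is meant to convey.
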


\begin{proof}
It follows from Lemma \ref{Semidirect} and Theorems \ref{iso-gr-0} and \ref{iso-gr}.
\end{proof}

%

For any $\b=(\b_n)_{n\in\mathbb{Z}}\in k^{\mathbb{Z}}$ (or $(k^{\times})^{\mathbb{Z}}$),
define $\b_{n,m}\in k$ (or $k^{\times}$) for all $n\in\mathbb Z$ and $m\>0$
by $\b_{n,0}=1$ and
$\b_{n,m}=\b_n\b_{n+1}\cdots\b_{n+m-1}=\prod_{i=0}^{m-1}\b_{n+i}$ for $m\>1$.
For any integers $0<m\<n$, define $(n,m)_q$ by
$(n,m)_q=(n)_q(n-1)_q\cdots(n-m+1)_q=\prod_{i=0}^{m-1}(n-i)_q$.

Note that $k^{\mathbb Z}$ is an additive group as stated in Section 1.
Let $\phi\in Aut_*(H)$. Then there is a unique element $\b=(\b_n)_{n\in\mathbb{Z}}\in k^{\mathbb{Z}}$
such that $\phi(x^ny)=x^ny+\b_n(x^{n+1}-x^n)$ for all $n\in\mathbb Z$. Hence one can
define a map $f_1: Aut_*(H)\ra k^{\mathbb Z}$ by $f_1(\phi)=\b=(\b_n)_{n\in\mathbb{Z}}$.

\begin{proposition}\label{Iso1}
$f_1$ is a group epimorphism with ${\rm Ker}(f_1)=Aut_1(H)$. That is, there is an exact sequence of groups
$$1\ra Aut_1(H)\hookrightarrow Aut_*(H)\xrightarrow{f_1}k^{\mathbb Z}\ra 0.$$
\end{proposition}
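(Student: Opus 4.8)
The plan is to verify in turn that $f_1$ is well defined, that it is a group homomorphism into the additive group $k^{\mathbb Z}$, that ${\rm Ker}(f_1)=Aut_1(H)$, and finally that $f_1$ is surjective; only the last point requires real work. First I would observe that $f_1$ is well defined because, for $\phi\in Aut_*(H)$, the scalars $\b_n$ are uniquely determined by the values $\phi(x^ny)$ (the elements $x^ny,x^{n+1},x^n$ being linearly independent). For the homomorphism property, take $\phi,\psi\in Aut_*(H)$ with $f_1(\phi)=\b$ and $f_1(\psi)=\g$; since $\phi$ fixes every group-like $x^m$ (because $\phi\in Aut_0(H)$, by Lemma~\ref{auto01}), one gets
$$(\phi\psi)(x^ny)=\phi\big(x^ny+\g_n(x^{n+1}-x^n)\big)=\phi(x^ny)+\g_n(x^{n+1}-x^n)=x^ny+(\b_n+\g_n)(x^{n+1}-x^n),$$
so $f_1(\phi\psi)=\b+\g$ and $f_1$ is a homomorphism.

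Next I would treat the kernel. An element $\phi\in Aut_*(H)$ lies in ${\rm Ker}(f_1)$ exactly when $\b_n=0$ for all $n$, i.e. when $\phi(x^ny)=x^ny$ for all $n$; together with $\phi(x^n)=x^n$ (automatic in $Aut_0(H)$) this says precisely that $\phi$ fixes the span of $\{x^n,x^ny\mid n\in\mathbb Z\}=H(0)\oplus H(1)=\sum_{i=0}^1H(i)$, which is the definition of $Aut_1(H)$. The reverse inclusion $Aut_1(H)\subseteq Aut_*(H)$ (fixing $H(1)$ forces $\b=0$) and the triviality of $f_1$ on $Aut_1(H)$ are immediate, so ${\rm Ker}(f_1)=Aut_1(H)$; combined with surjectivity this gives the asserted exact sequence.

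It remains to prove surjectivity: given $\b=(\b_n)_{n\in\mathbb Z}\in k^{\mathbb Z}$, I would construct $\phi_{\b}\in Aut_*(H)$ with $f_1(\phi_{\b})=\b$ by defining a linear endomorphism of $H$ on the basis $\{x^ny^m\}$, recursively in $m$: set $\phi_{\b}(x^n)=x^n$, $\phi_{\b}(x^ny)=x^ny+\b_n(x^{n+1}-x^n)$, and $\phi_{\b}(x^ny^m)=x^ny^m+h_{n,m}$ with $h_{n,m}\in\sum_{i=0}^{m-1}H(i)$ to be determined by the coalgebra condition. Writing $\Delta\phi_{\b}(x^ny^m)=(\phi_{\b}\ot\phi_{\b})\Delta(x^ny^m)$ out with Lemma~\ref{basic}(2) and cancelling the degree-$m$ components (which on both sides equal $\Delta(x^ny^m)$, by the argument that gives $\a_{n,m}=1$ in Lemma~\ref{firstitem}), the remaining lower-degree part is a linear equation
$$\Delta(h_{n,m})-x^n\ot h_{n,m}-h_{n,m}\ot x^{n+m}=\O_{n,m},\quad\O_{n,m}=\sum_{i=1}^{m-1}\binom{m}{i}_q\big(\phi_{\b}(x^ny^i)\ot\phi_{\b}(x^{n+i}y^{m-i})-x^ny^i\ot x^{n+i}y^{m-i}\big),$$
whose right-hand side is already known from lower degrees and lies in $\sum_{d<m}(H\ot H)(d)$. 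Solving this for $h_{n,m}$ — equivalently, writing down a closed formula for $h_{n,m}$ in terms of the quantities $\b_{n,i}$, $(i,j)_q$ and the $q$-binomial coefficients above — produces $\phi_{\b}$; one can keep all $h_{n,m}$ in $\ker\e$, so that $\e\circ\phi_{\b}=\e$, and since $\phi_{\b}={\rm id}+N$ with $N$ strictly lowering the grading degree, $N$ is locally nilpotent and $\phi_{\b}$ is automatically bijective. Hence $\phi_{\b}\in Aut_0(H)$, in fact $\phi_{\b}\in Aut_*(H)$ by construction, and $f_1(\phi_{\b})=\b$.

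The hard part will be this recursive construction, i.e. checking that the displayed linear equation for $h_{n,m}$ is solvable at every stage. This rests on the cocycle constraint that $\O_{n,m}$ inherits from the coassociativity of $\Delta$, together with the $q$-Vandermonde relation $\binom{m}{i}_q\binom{i}{j}_q=\binom{m}{j}_q\binom{m-j}{i-j}_q$; the cleanest route is probably to sidestep the abstract solvability argument altogether by exhibiting an explicit formula for $\phi_{\b}(x^ny^m)$ and verifying $\Delta\phi_{\b}=(\phi_{\b}\ot\phi_{\b})\Delta$ directly, at the cost of some $q$-binomial bookkeeping. Everything else in the proof is routine.
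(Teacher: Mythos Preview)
Your treatment of the homomorphism property and the kernel is correct and matches the paper verbatim. The bijectivity argument via local nilpotence of the degree-lowering part is a clean observation that the paper leaves implicit.

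The gap is in surjectivity. Your recursive scheme sets up the right equation for $h_{n,m}$, but the claim that it is solvable because of ``the cocycle constraint that $\Omega_{n,m}$ inherits from coassociativity'' is not a proof: coassociativity guarantees that $\Omega_{n,m}$ is a $2$-cocycle for the differential $h\mapsto\Delta(h)-x^n\ot h-h\ot x^{n+m}$, but it does not by itself force this cocycle to be a coboundary, i.e.\ that an $h_{n,m}$ actually exists. You implicitly concede this by suggesting one should instead exhibit an explicit formula and verify it directly; that is precisely what the paper does, and there is no softer route. The paper sets $\phi^{(1)}_{\b}(x^ny^m)=\sum_{l=0}^m a_{n,m,l}\,y^l$ with $a_{n,m,m}=x^n$ and, for $0\le l<m$,
$$a_{n,m,l}=(m,m-l)_q\bigl(\b_{n,m-l}\,x^{n+m-l}-\b_{n,m-l-1}\,\b_{n+m-1}\,x^{n+m-l-1}\bigr),$$
where $\b_{n,j}=\prod_{i=0}^{j-1}\b_{n+i}$ and $(m,j)_q=\prod_{i=0}^{j-1}(m-i)_q$, and then checks by a direct (telescoping, but lengthy) computation that
$$\binom{l}{j}_q\Delta(a_{n,m,l})(1\ot x^j)=\sum_{i=j}^{m-l+j}\binom{m}{i}_q\, a_{n,i,j}\ot a_{n+i,m-i,l-j}$$
holds for all $0\le j\le l\le m$, which is equivalent to $(\phi^{(1)}_{\b}\ot\phi^{(1)}_{\b})\Delta=\Delta\phi^{(1)}_{\b}$. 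So your plan is the right one, but the ``$q$-binomial bookkeeping'' you defer is in fact the entire substance of the argument and has to be carried out.
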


\begin{proof}
Let $\phi, \psi\in Aut_*(H)$, and assume that $f_1(\phi)=\b=(\b_n)_{n\in\mathbb{Z}}$ and
$f_1(\psi)=\g=(\g_n)_{n\in\mathbb{Z}}$ in $k^{\mathbb Z}$. Then
$\phi(x^n)=x^n$, $\phi(x^ny)=x^ny+\b_n(x^{n+1}-x^n)$, $\psi(x^n)=x^n$ and $\psi(x^ny)=x^ny+\g_n(x^{n+1}-x^n)$,
$n\in\mathbb Z$. Hence $(\phi\psi)(x^n)=x^n$ and $(\phi\psi)(x^ny)=\phi(x^ny+\g_n(x^{n+1}-x^n))
=x^ny+(\b_n+\g_n)(x^{n+1}-x^n)$
for all $n\in\mathbb Z$, and so $f_1(\phi\psi)=\b+\g=f_1(\phi)+f_1(\psi)$.
It follows that $f_1$ is a group homomorphism. Obviously, Ker$(f_1)=Aut_1(H)$. It is left to show that
$f_1$ is surjective.

Let $\b=(\b_n)_{n\in\mathbb{Z}}\in k^{\mathbb Z}$.
For any $n\in\mathbb Z$ and $0\<l\<m$, define $a_{n,m,l}\in H_0$ by $a_{n,m,m}=x^n$ for $m\>0$,
and
$$a_{n,m,l}=(m,m-l)_q
(\b_{n,m-l}x^{n+m-l}-\b_{n,m-l-1}\b_{n+m-1}x^{n+m-l-1})$$
for $0\<l<m$. Note that $a_{n,m,0}=(m)!_q\b_{n, m}(x^{n+m}-x^{n+m-1})$ if $m>0$.
Now define a linear map $\phi^{(1)}_{\b}: H\ra H$ by
$$\phi^{(1)}_{\b}(x^ny^m)=\sum\limits_{l=0}^ma_{n,m,l}y^l,\
n\in\mathbb{Z},\ m\>0.$$
It is easy to see that $\phi^{(1)}_{\b}$ is a bijection and
$\e\phi^{(1)}_{\b}(x^ny^m)=\e(x^ny^m)$ for all $n\in\mathbb Z$ and $m\>0$.
Now we are going to show that $\Delta\phi^{(1)}_{\b}(x^ny^m)=(\phi^{(1)}_{\b}\ot\phi_{\b}^{(1)})\Delta(x^ny^m)$
for all $n\in\mathbb Z$ and $m\>0$.
From the proof of Lemma \ref{firstitem}, we have
$$\Delta\phi^{(1)}_{\b}(x^ny^m)=\sum\limits_{l=0}^m\sum\limits_{j=0}^l
\left(\begin{array}{c}
l\\
j\\
\end{array}\right)_q\Delta(a_{n,m,l})(1\ot x^j)(y^j\ot y^{l-j})$$
and
$$
(\phi^{(1)}_{\b}\ot\phi_{\b}^{(1)})\Delta(x^ny^m)=\sum\limits_{i=0}^m\sum\limits_{j=0}^i\sum\limits_{t=0}^{m-i}
\left(\begin{array}{c}
m\\
i\\
\end{array}\right)_q(a_{n,i,j}\ot a_{n+i,m-i,t})(y^j\ot y^t).$$
For any $0\<j\<l\<m$, the coefficients of $y^j\ot y^{l-j}$ in $\Delta\phi^{(1)}_{\b}(x^ny^m)$
and $(\phi^{(1)}_{\b}\ot\phi_{\b}^{(1)})\Delta(x^ny^m)$ are
$$\left(\begin{array}{c}
l\\
j\\
\end{array}\right)_q\Delta(a_{n,m,l})(1\ot x^j) \mbox{ and }
\sum\limits_{i=j}^{m-l+j}
\left(\begin{array}{c}
m\\
i\\
\end{array}\right)_qa_{n,i,j}\ot a_{n+i,m-i,l-j}$$
in $H_0\ot H_0$, respectively.
Thus, we only need to show that
$$\hspace{1.2cm}\left(\begin{array}{c}
l\\
j\\
\end{array}\right)_q\Delta(a_{n,m,l})(1\ot x^j)=
\sum\limits_{i=j}^{m-l+j}
\left(\begin{array}{c}
m\\
i\\
\end{array}\right)_q a_{n,i,j}\ot a_{n+i,m-i,l-j}\ \ \ \ \ \ (3.1)$$
for all $n\in\mathbb Z$ and $0\<j\<l\<m$.

If $l=m$, then
$$\left(\begin{array}{c}
l\\
j\\
\end{array}\right)_q\Delta(a_{n,m,l})(1\ot x^j)=\left(\begin{array}{c}
m\\
j\\
\end{array}\right)_q\Delta(a_{n,m,m})(1\ot x^j)=\left(\begin{array}{c}
m\\
j\\
\end{array}\right)_qx^n\ot x^{n+j}$$
and
$$\begin{array}{rcl}
\sum\limits_{i=j}^{m-l+j}
\left(\begin{array}{c}
m\\
i\\
\end{array}\right)_qa_{n,i,j}\ot a_{n+i,m-i,l-j}&=&
\left(\begin{array}{c}
m\\
j\\
\end{array}\right)_q a_{n,j,j}\ot a_{n+j,m-j,m-j}\\
&=&
\left(\begin{array}{c}
m\\
j\\
\end{array}\right)_qx^n\ot x^{n+j}.\\
\end{array}$$
Hence Eq.(3.1) holds in this case.

If $0\<j\<l<m$, let $\eta=\left(\begin{array}{c}
l\\
j\\
\end{array}\right)_q(m,m-l)_q$. Then
$$\begin{array}{cl}
&\left(\begin{array}{c}
l\\
j\\
\end{array}\right)_q\Delta(a_{n,m,l})(1\ot x^j)\\
=&\eta(\b_{n,m-l}x^{n+m-l}\ot x^{n+m-l+j}-\b_{n,m-l-1}\b_{n+m-1}x^{n+m-l-1}\ot x^{n+m-l-1+j})\\
\end{array}$$
and
$$\begin{array}{rl}
&\sum\limits_{i=j}^{m-l+j}
\left(\begin{array}{c}
m\\
i\\
\end{array}\right)_q a_{n,i,j}\ot a_{n+i,m-i,l-j}\\
=&\left(\begin{array}{c}
m\\
j\\
\end{array}\right)_q a_{n,j,j}\ot a_{n+j,m-j,l-j}
+\sum\limits_{j<i<m-l+j}
\left(\begin{array}{c}
m\\
i\\
\end{array}\right)_q a_{n,i,j}\ot a_{n+i,m-i,l-j}\\
&+\left(\begin{array}{c}
m\\
m-l+j\\
\end{array}\right)_q a_{n,m-l+j,j}\ot a_{n+m-l+j,l-j,l-j}\\
=&\left(\begin{array}{c}
m\\
j\\
\end{array}\right)_q(m-j,m-l)_q\\
&\times x^n\ot (\b_{n+j,m-l}x^{n+m-l+j}-\b_{n+j,m-l-1}\b_{n+m-1}x^{n+m-l+j-1})\\
&+\sum\limits_{j<i<m-l+j}
\left(\begin{array}{c}
m\\
i\\
\end{array}\right)_q (i,i-j)_q (m-i,m-i-l+j)_q\\
&\times(\b_{n,i-j}x^{n+i-j}-\b_{n,i-j-1}\b_{n+i-1}x^{n+i-j-1})\\
&\ot (\b_{n+i,m-i-l+j}x^{n+m-l+j}-\b_{n+i,m-i-l+j-1}\b_{n+m-1}x^{n+m-l+j-1})\\
&+\left(\begin{array}{c}
m\\
m-l+j\\
\end{array}\right)_q (m-l+j,m-l)_q\\
&\times(\b_{n,m-l}x^{n+m-l}-\b_{n,m-l-1}\b_{n+m-l+j-1}x^{n+m-l-1}) \ot x^{n+m-l+j}\\
=&\eta x^n\ot
(\b_{n+j,m-l}x^{n+m-l+j}-\b_{n+j,m-l-1}\b_{n+m-1}x^{n+m-l+j-1})\\
&+\sum\limits_{j<i<m-l+j}
\eta
(\b_{n,i-j}x^{n+i-j}-\b_{n,i-j-1}\b_{n+i-1}x^{n+i-j-1})\\
&\ot (\b_{n+i,m-i-l+j}x^{n+m-l+j}-\b_{n+i,m-i-l+j-1}\b_{n+m-1}x^{n+m-l+j-1})\\
&+\eta(\b_{n,m-l}x^{n+m-l}-\b_{n,m-l-1}\b_{n+m-l+j-1}x^{n+m-l-1})
\ot x^{n+m-l+j}\\
=&\eta(\b_{n+j,m-l}x^n\ot x^{n+m-l+j}-\b_{n+j,m-l-1}\b_{n+m-1}x^n\ot x^{n+m-l+j-1})\\
&+\eta\sum\limits_{j<i<m-l+j}
\b_{n,i-j}\b_{n+i,m-i-l+j}x^{n+i-j}\ot x^{n+m-l+j}\\
&-\eta\sum\limits_{j<i<m-l+j}
\b_{n,i-j}\b_{n+i,m-i-l+j-1}\b_{n+m-1}x^{n+i-j}\ot x^{n+m-l+j-1}\\
\end{array}$$
$$\begin{array}{rl}
&-\eta\sum\limits_{j<i<m-l+j}
\b_{n,i-j-1}\b_{n+i-1,m-i-l+j+1}x^{n+i-j-1}\ot x^{n+m-l+j}\\
&+\eta\sum\limits_{j<i<m-l+j}
\b_{n,i-j-1}\b_{n+i-1,m-i-l+j}\b_{n+m-1}x^{n+i-j-1}\ot x^{n+m-l+j-1})\\
&+\eta(\b_{n,m-l}x^{n+m-l}\ot x^{n+m-l+j}-\b_{n,m-l-1}\b_{n+m-l+j-1}x^{n+m-l-1}
\ot x^{n+m-l+j})\\
=&\eta(\b_{n,m-l}x^{n+m-l}\ot x^{n+m-l+j}-\b_{n,m-l-1}\b_{n+m-1}x^{n+m-l-1}\ot x^{n+m-l-1+j}).\\
\end{array}$$
This shows that Eq.(3.1) also holds in this case.
Thus, we have proved that $\phi^{(1)}_{\b}$ is a coalgebra automorphism of $H$.
Since $\phi^{(1)}_{\b}(x^n)=a_{n,0,0}=x^n$ and $\phi^{(1)}_{\b}(x^ny)=a_{n,1,1}y+a_{n,1,0}
=x^ny+\b_n(x^{n+1}-x^n)$ for all $n\in\mathbb Z$, $\phi^{(1)}_{\b}\in Aut_*(H)$.
Moreover, $f_1(\phi^{(1)}_{\b})=\b$. It follows that $f_1$ is surjective.
\end{proof}

\begin{lemma}\label{0item}
Let $s\>2$ and $\phi\in Aut_{s-1}(H)$. Then there exists a unique
$\b=(\b_n)_{n\in\mathbb{Z}}\in k^{\mathbb Z}$ such that
$\phi(x^ny^{s})=x^ny^{s}+\b_n(x^{n+s}-x^n)$ for all $n\in\mathbb{Z}.$
\end{lemma}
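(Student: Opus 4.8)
The idea is to run the same kind of coalgebra-map computation that appeared in Lemma~\ref{firstitem} and Proposition~\ref{Iso1}, but now one graded degree higher. Let $\phi\in Aut_{s-1}(H)$, so that $\phi$ fixes everything in $\sum_{i=0}^{s-1}H(i)$; in particular $\phi(x^n)=x^n$ and $\phi(x^ny^i)=x^ny^i$ for all $n\in\mathbb Z$ and $0\<i\<s-1$. By Lemmas~\ref{inclu}, \ref{notinclu} and \ref{firstitem}, for each fixed $n$ we may write $\phi(x^ny^s)=\sum_{i=0}^s a_{i}y^{i}$ with $a_i\in H_0$ and, since $\a_{n,1}=1$ forces $\a_{n,m}=1$ by Lemma~\ref{firstitem}, the top coefficient is $a_s=x^n$. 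So $\phi(x^ny^s)-x^ny^s=\sum_{i=0}^{s-1}a_iy^i\in\sum_{i=0}^{s-1}H(i)$, and the whole content of the lemma is to pin down what these lower-degree terms look like.

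First I would exploit the coalgebra condition $\Delta\phi(x^ny^s)=(\phi\ot\phi)\Delta(x^ny^s)$. Expand the right-hand side using Lemma~\ref{basic}(2); the middle terms $\binom{s}{i}_q\phi(x^ny^i)\ot\phi(x^{n+i}y^{s-i})$ with $0<i<s$ are $\binom{s}{i}_q x^ny^i\ot x^{n+i}y^{s-i}$ by the hypothesis $\phi\in Aut_{s-1}(H)$, while the two outer terms give $x^n\ot\phi(x^ny^s)+\phi(x^ny^s)\ot x^{n+s}$. Writing $h:=\phi(x^ny^s)-x^ny^s$, the identity collapses (after cancelling $\Delta(x^ny^s)$, which satisfies the same relation) to
\[
\Delta(h)=h\ot x^{n+s}+x^n\ot h .
\]
Thus $x^{-n}h\in P_{x^s,1}(H)$. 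Now $h\in\sum_{i=0}^{s-1}H(i)$, and I would split $h=h_0+h'$ with $h_0\in H_0$ and $h'\in\sum_{i=1}^{s-1}H(i)$; comparing graded components and invoking Lemma~\ref{primi} (which, since $s\geqslant 2$, gives $P_{x^s,1}(H)=k(x^s-1)$, with no $y$-part available) forces $h'=0$ and $x^{-n}h_0=\b_n(x^s-1)$ for a unique scalar $\b_n\in k$. Hence $h=\b_n(x^{n+s}-x^n)$, i.e. $\phi(x^ny^s)=x^ny^s+\b_n(x^{n+s}-x^n)$, and $\b=(\b_n)_{n\in\mathbb Z}\in k^{\mathbb Z}$ is uniquely determined.

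The main obstacle is bookkeeping the grading carefully: one must be sure that in the expansion of $(\phi\ot\phi)\Delta(x^ny^s)$ the ``cross'' terms really are unchanged by $\phi$ (this is exactly where $\phi\in Aut_{s-1}(H)$ rather than merely $\phi\in Aut_0(H)$ is used), and that the $(x^s,1)$-primitive extracted lives in $\bigoplus_{i=0}^{s-1}H(i)$ so that the ``$ky$'' exceptional case of Lemma~\ref{primi} — which only occurs in degree $1$ for the group-like $x^1$ — cannot intrude when $s\geqslant 2$. Uniqueness of $\b_n$ is immediate once $h$ is shown to be a scalar multiple of $x^{n+s}-x^n$, since $\{x^m\}$ is a basis of $H_0$. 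I expect the write-up to mirror the computations already done in Lemma~\ref{firstitem} and Proposition~\ref{Iso1}, so no genuinely new technique is needed.
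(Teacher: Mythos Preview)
Your proposal is correct, and it is in fact a cleaner route than the paper's own proof.

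The paper argues coefficient by coefficient: writing $\phi(x^ny^s)=\sum_{l=0}^{s}a_{n,s,l}y^l$ with $a_{n,s,l}\in H_0$, it substitutes into the general coefficient identity derived in the proof of Proposition~\ref{Iso1} (Eq.~(3.1), specialised to $m=s$), and then for each $1\<l<s$ separately shows $a_{n,s,l}=0$ by choosing suitable values of $j$ and using the vanishing $a_{n,i,j}=0$ for $0\<j<i\<s-1$. Only after eliminating all intermediate $l$ does it treat $a_{n,s,0}$ and recognise it as an $(x^{n+s},x^n)$-primitive in $H_0$.

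You short-circuit this by setting $h:=\phi(x^ny^s)-x^ny^s$ and observing in one stroke that $\Delta(h)=h\ot x^{n+s}+x^n\ot h$, so $x^{-n}h\in P_{x^s,1}(H)$ in all of $H$, not just $H_0$; a single application of Lemma~\ref{primi} (for $s\neq 1$) then finishes. The paper's approach has the minor virtue of reusing the coefficient machinery (3.1) that is set up for, and needed again in, Proposition~\ref{Isos}; your approach is more conceptual and much shorter. One small remark: the splitting $h=h_0+h'$ and the phrase ``comparing graded components'' are harmless but unnecessary --- since $s\>2$, Lemma~\ref{primi} already gives $P_{x^s,1}(H)=k(x^s-1)$ outright, so $x^{-n}h\in k(x^s-1)$ follows immediately without isolating graded pieces.
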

\begin{proof}
For any $n\in\mathbb Z$ and $m\>0$, it follows from Lemma \ref{firstitem} that
$\phi(x^ny^m)=\sum_{l=0}^ma_{n,m,l}y^l$ for some $a_{n,m,l}\in H_0$
since $\phi\in Aut_{s-1}(H)\subseteq Aut_*(H)\subseteq Aut_0(H)$. Moreover, we have that
$a_{n,m,m}=x^n$ for all $m\>0$, and $a_{n, m, l}=0$ for all $0\<l<m\<s-1$.
Since $\phi$ is a coalgebra map, $\Delta\phi(x^ny^{s})=(\phi\ot\phi)\Delta(x^ny^{s})$
for all $n\in\mathbb Z$. Then by the proof of Proposition \ref{Iso1}, we have
$$\hspace{1.2cm}\left(\begin{array}{c}
l\\
j\\
\end{array}\right)_q\Delta(a_{n,s,l})(1\ot x^j)=
\sum\limits_{i=j}^{s-l+j}
\left(\begin{array}{c}
s\\
i\\
\end{array}\right)_q a_{n,i,j}\ot a_{n+i,s-i,l-j}\ \ \ \ \ \ \ \ \ (3.2)$$
for all $n\in\mathbb Z$ and $0\<j\<l\<s$. Putting $j=1$ in Eq.(3.2),
we have
$$\hspace{1.5cm}(l)_q\Delta(a_{n,s,l})(1\ot x)=
\sum\limits_{i=1}^{s-l+1}
\left(\begin{array}{c}
s\\
i\\
\end{array}\right)_q a_{n,i,1}\ot a_{n+i,s-i,l-1}.\quad\quad\quad\quad\ (3.3)$$
If $2\<l<s$, then $1<s-l+1\<s-1$ and $a_{n,i,1}=0$ for all $2\<i\<s-l+1$.
Thus, by Eq.(3.2) we have
$$(l)_q\Delta(a_{n,s,l})(1\ot x)=\left(\begin{array}{c}
s\\
1\\
\end{array}\right)_q a_{n,1,1}\ot a_{n+1,s-1,l-1}=0$$
since $a_{n+1,s-1,l-1}=0$ by $s-1>l-1$. It follows that $a_{n,s,l}=0$ for all
$2\<l<s$.
If $l=1$, then by Eq.(3.3) we have
$$\begin{array}{rcl}
\Delta(a_{n,s,1})(1\ot x)&=&
\sum\limits_{i=1}^{s}
\left(\begin{array}{c}
s\\
i\\
\end{array}\right)_q a_{n,i,1}\ot a_{n+i,s-i,0}\\
&=&a_{n,s,1}\ot a_{n+s,0,0}\\
&=&a_{n,s,1}\ot x^{n+s}.\\
\end{array}$$
Applying $\varepsilon\ot{\rm id}$ on the both sides of the above equation, one gets that
$a_{n,s,1}=\varepsilon(a_{n,s,1})x^{n+s-1}$. Now putting $j=0$ and $l=1$ in Eq.(3.2), we have
$$\Delta(a_{n,s,1})=
\sum\limits_{i=0}^{s-1}
\left(\begin{array}{c}
s\\
i\\
\end{array}\right)_q a_{n,i,0}\ot a_{n+i,s-i,1}=a_{n,0,0}\ot a_{n,s,1}=x^n\ot a_{n,s,1}.$$
Applying ${\rm id}\ot\varepsilon$ on the both sides of the above equation, one gets that
$a_{n,s,1}=\varepsilon(a_{n,s,1})x^{n}$. Then from $a_{n,s,1}=\varepsilon(a_{n,s,1})x^{n+s-1}$
and $a_{n,s,1}=\varepsilon(a_{n,s,1})x^{n}$, one gets that $a_{n,s,1}=0$.
Finally, putting $j=l=0$ in Eq.(3.2), we have
$$\begin{array}{rcl}
\Delta(a_{n,s,0})&=&
\sum\limits_{i=0}^{s}
\left(\begin{array}{c}
s\\
i\\
\end{array}\right)_q a_{n,i,0}\ot a_{n+i,s-i,0}\\
&=&a_{n,0,0}\ot a_{n,s,0}+a_{n,s,0}\ot a_{n+s,0,0}\\
&=&x^n\ot a_{n,s,0}+a_{n,s,0}\ot x^{n+s}.\\
\end{array}$$
It follows from Lemma \ref{primi} that $a_{n,s,0}=\b_n(x^{n+s}-x^n)$ for some $\b_n\in k$,
and so $\phi(x^ny^{s})=x^ny^{s}+\b_n(x^{n+s}-x^n)$. Obviously,
the element $\b=(\b_n)_{n\in\mathbb{Z}}\in k^{\mathbb Z}$ is uniquely determined by $\phi$.
\end{proof}

Let $s\>2$. By Lemma \ref{0item}, one can define a map $f_s: Aut_{s-1}(H)\ra k^{\mathbb Z}$,
$f_s(\phi)=\b=(\b_n)_{n\in\mathbb{Z}}$ by $\phi(x^ny^{s})=x^ny^{s}+\b_n(x^{n+s}-x^n)$ for all $n\in\mathbb Z$.
Then we have the following lemma.

\begin{lemma}\label{f_s-homo}
Let $s\>2$. Then the map $f_s: Aut_{s-1}(H)\ra k^{\mathbb Z}$ defined above is a group homomorphism
with ${\rm Ker}(f_s)=Aut_{s}(H)$.
\end{lemma}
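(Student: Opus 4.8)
The plan is to mimic the argument that established Proposition \ref{Iso1} for the case $s=1$. First I would take $\phi, \psi \in Aut_{s-1}(H)$ and set $f_s(\phi) = \b = (\b_n)_{n\in\mathbb Z}$, $f_s(\psi) = \g = (\g_n)_{n\in\mathbb Z}$, so that $\phi(x^n) = x^n = \psi(x^n)$ for all $n$, while $\phi(x^n y^s) = x^n y^s + \b_n(x^{n+s}-x^n)$ and $\psi(x^n y^s) = x^n y^s + \g_n(x^{n+s}-x^n)$. The key point is that $\phi$ and $\psi$ fix everything in $\sum_{i=0}^{s-1}H(i)$, and in particular fix each $x^{n+s}$ and $x^n$. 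Hence $(\phi\psi)(x^n) = x^n$, and
\begin{align*}
(\phi\psi)(x^n y^s) &= \phi\bigl(x^n y^s + \g_n(x^{n+s}-x^n)\bigr)\\
&= \bigl(x^n y^s + \b_n(x^{n+s}-x^n)\bigr) + \g_n(x^{n+s}-x^n)\\
&= x^n y^s + (\b_n+\g_n)(x^{n+s}-x^n),
\end{align*}
which gives $f_s(\phi\psi) = \b + \g = f_s(\phi) + f_s(\psi)$. So $f_s$ is a group homomorphism into the additive group $k^{\mathbb Z}$.

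Next I would identify the kernel. If $\phi \in \mathrm{Ker}(f_s)$, then $\b_n = 0$ for all $n$, so $\phi(x^n y^s) = x^n y^s$ for all $n\in\mathbb Z$. Since $\phi \in Aut_{s-1}(H)$ already fixes $\sum_{i=0}^{s-1}H(i) = \sum_{i=0}^{s-1}H_0 y^i$ pointwise, and now also fixes $H_0 y^s = H(s)$ (because $\phi(x^n y^s) = x^n y^s$ and $\phi$ is linear and fixes $H_0$), it follows that $\phi$ fixes $\sum_{i=0}^{s}H(i)$ pointwise, i.e.\ $\phi \in Aut_s(H)$. Conversely, if $\phi \in Aut_s(H)$ then $\phi$ fixes $x^n y^s$, so $f_s(\phi) = 0$. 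Hence $\mathrm{Ker}(f_s) = Aut_s(H)$.

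I do not expect any serious obstacle here: the content of Lemma \ref{0item} (that $\phi(x^n y^s)$ really does have the claimed form $x^n y^s + \b_n(x^{n+s}-x^n)$, with no other terms) is what makes the homomorphism property a one-line computation, and that hard work is already done. The only mild point to be careful about is the bookkeeping that $\phi\in Aut_{s-1}(H)$ fixes $x^{n+s}$ and $x^n$ — but these lie in $H_0 = H(0) \subseteq \sum_{i=0}^{s-1}H(i)$ since $s\geq 2$, so they are indeed fixed, and the computation of $(\phi\psi)(x^n y^s)$ goes through verbatim. One should also note that the product $\phi\psi$ lies in $Aut_{s-1}(H)$ (so that $f_s(\phi\psi)$ is defined) because $Aut_{s-1}(H)$ is a subgroup by Lemma \ref{serial}.
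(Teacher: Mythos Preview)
Your proposal is correct and follows essentially the same approach as the paper's own proof: compute $(\phi\psi)(x^ny^s)$ directly using that $\phi$ fixes the group-like elements $x^{n+s}$ and $x^n$, then read off $f_s(\phi\psi)=\b+\g$, and identify the kernel from the definitions. If anything, your write-up is slightly more careful than the paper's, which dispatches the kernel claim with a single ``Obviously''.
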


\begin{proof}
Let $\phi, \psi\in Aut_{s-1}(H)$, and assume that $f_s(\phi)=\b=(\b_n)_{n\in\mathbb Z}$ and
$f_s(\psi)=\g=(\g_n)_{n\in\mathbb Z}$ in $k^{\mathbb Z}$. Then $\phi(x^ny^m)=\psi(x^ny^m)=x^ny^m$
for all $n\in\mathbb Z$ and $0\<m\<s-1$, and
$$\phi(x^ny^{s})=x^ny^{s}+\b_n(x^{n+s}-x^n),\ \ \psi(x^ny^{s})=x^ny^{s}+\g_n(x^{n+s}-x^n)$$
for all $n\in\mathbb Z$. Hence $(\phi\psi)(x^ny^m)=x^ny^m$
for all $n\in\mathbb Z$ and $0\<m\<s-1$, and
$$\begin{array}{rcl}
(\phi\psi)(x^ny^{s})&=&\phi(x^ny^{s}+\g_n(x^{n+s}-x^n))\\
&=&\phi(x^ny^{s})+\g_n(x^{n+s}-x^n)\\
&=&x^ny^{s}+(\b_n+\g_n)(x^{n+s}-x^n)\\
\end{array}$$
for all $n\in\mathbb Z$. Thus, $f_s(\phi\psi)=\b+\g=f_s(\phi)+f_s(\psi)$.
This shows that $f_s$ is a group homomorphism. Obviously, ${\rm Ker}(f_s)=Aut_{s}(H)$.
\end{proof}

For any integers $1\<t\<m$ and $1\<l\<\frac{m}{t}$, let
$$\left(
    \begin{array}{c}
      m \\
      t \\
    \end{array}
  \right)_{q,l}=\left(
    \begin{array}{c}
      m \\
      t \\
    \end{array}
  \right)_q\left(
    \begin{array}{c}
      m-t \\
      t \\
    \end{array}
  \right)_q\cdots\left(
    \begin{array}{c}
      m-(l-1)t \\
      t \\
    \end{array}
  \right)_q=\prod\limits_{i=0}^{l-1}\left(
    \begin{array}{c}
      m-it \\
      t \\
    \end{array}
  \right)_q.$$
Note that $\left(
    \begin{array}{c}
      m \\
      t \\
    \end{array}
  \right)_{q,1}=\left(
    \begin{array}{c}
      m \\
      t \\
    \end{array}
  \right)_q$ and $\left(
    \begin{array}{c}
      m \\
      1 \\
    \end{array}
  \right)_{q,l}=(m,l)_q$.

Let $\b=(\b_n)_{n\in\mathbb Z}\in k^{\mathbb Z}$. For any integers $n\in\mathbb{Z}$, $t\>1$ and $m\>0$,
define $\b_{n,t;m}\in k$ by $\b_{n,t;0}=1$ and
$$\b_{n,t;m}=\b_n\b_{n+t}\cdots\b_{n+(m-1)t}=\prod\limits_{i=0}^{m-1}\b_{n+it}$$
for $m>0$. Note that $\b_{n,t;1}=\b_n$ and $\b_{n,1;m}=\b_{n,m}$.

For any real number $r$, let $[r]$ denote the integral part of $r$, i.e., $[r]$ is an integer
such that $0\<r-[r]<1$.

\begin{proposition}\label{Isos}
Let $s\>2$. Then $f_s: Aut_{s-1}(H)\ra k^{\mathbb Z}$ is a group epimorphism. Consequently,
there is an exact sequence of groups
$$1\ra Aut_s(H)\hookrightarrow Aut_{s-1}(H)\xrightarrow{f_s}k^{\mathbb Z}\ra 0.$$
\end{proposition}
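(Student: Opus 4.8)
The plan is to reduce the statement to the surjectivity of $f_s$ and then produce an explicit preimage, following the pattern of Proposition~\ref{Iso1}. By Lemma~\ref{f_s-homo} it is already known that $f_s$ is a group homomorphism with ${\rm Ker}(f_s)=Aut_s(H)$, so once $f_s$ is shown to be surjective the exact sequence $1\ra Aut_s(H)\hookrightarrow Aut_{s-1}(H)\xrightarrow{f_s}k^{\mathbb Z}\ra 0$ follows at once. Thus the whole proof consists in constructing, for an arbitrarily given $\b=(\b_n)_{n\in\mathbb Z}\in k^{\mathbb Z}$, a coalgebra automorphism $\phi^{(s)}_{\b}\in Aut_{s-1}(H)$ with $f_s(\phi^{(s)}_{\b})=\b$.

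For the construction I would imitate the definition of $\phi^{(1)}_{\b}$ from the proof of Proposition~\ref{Iso1}, with the role of the single variable $y$ played throughout by $y^s$ and the combinatorial data replaced by their $s$-step analogues. Concretely, define $a_{n,m,l}\in H_0$ by $a_{n,m,m}=x^n$, by $a_{n,m,l}=0$ whenever $s\nmid m-l$, and, for $l=m-js$ with $1\<j\<[m/s]$, by
$$a_{n,m,l}=\binom{m}{s}_{q,j}\Bigl(\b_{n,s;j}\,x^{n+js}-\b_{n,s;j-1}\,\b_{n+m-s}\,x^{n+js-s}\Bigr),$$
the last scalar being the evident $s$-analogue of the correction factor $\b_{n,m-l-1}\b_{n+m-1}$ occurring for $s=1$ (its precise shape being whatever makes the telescoping below close). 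Set $\phi^{(s)}_{\b}(x^ny^m)=\sum_{l=0}^m a_{n,m,l}y^l$. One checks immediately that this is a $k$-linear automorphism of $H$ with $\e\circ\phi^{(s)}_{\b}=\e$, that it fixes $x^ny^m$ for $0\<m\<s-1$ (there $l=m$ is the only admissible index), and that $\phi^{(s)}_{\b}(x^ny^s)=x^ny^s+\b_n(x^{n+s}-x^n)$. Hence, once $\phi^{(s)}_{\b}$ is known to be a coalgebra map, it lies in $Aut_0(H)$, in fact in $Aut_{s-1}(H)$, and $f_s(\phi^{(s)}_{\b})=\b$, which gives the surjectivity.

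The substantive part is therefore the verification that $\phi^{(s)}_{\b}$ preserves comultiplication, that is, $\Delta\phi^{(s)}_{\b}(x^ny^m)=(\phi^{(s)}_{\b}\ot\phi^{(s)}_{\b})\Delta(x^ny^m)$ for all $n$ and $m$. As in Proposition~\ref{Iso1} one uses that $H\ot H$ is free over $H_0\ot H_0$ with basis $\{y^j\ot y^t\}$ and compares the coefficient of each $y^j\ot y^{l-j}$, which reduces the identity to the family of equations
$$\binom{l}{j}_q\Delta(a_{n,m,l})(1\ot x^j)=\sum_{i=j}^{m-l+j}\binom{m}{i}_q a_{n,i,j}\ot a_{n+i,m-i,l-j},\qquad 0\<j\<l\<m,$$
the direct analogue of Eq.~(3.1) and Eq.~(3.2). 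The case $l=m$ is trivial (both sides equal $\binom{m}{j}_q x^n\ot x^{n+j}$); the cases with $s\nmid m-l$ force both sides to vanish, since a summand $a_{n,i,j}\ot a_{n+i,m-i,l-j}$ with $s\mid i-j$ and $s\mid(m-i)-(l-j)$ can be nonzero only when $s\mid m-l$; and the remaining case $s\mid m-l$, $l<m$ is where the effort goes. There one substitutes the formulas for the $a$'s, uses the factorisation $\binom{m}{s}_{q,j}=\binom{m}{s}_q\binom{m-s}{s}_{q,j-1}$ together with the $q$-Pascal and $q$-Vandermonde relations among the ordinary $q$-binomial coefficients, and checks that the alternating sum telescopes in the partial products $\b_{n,s;\bullet}$, leaving exactly the two monomials on the left-hand side. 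I expect this combinatorial identity to be the main obstacle: it is the honest $s\>2$ generalisation of the long telescoping computation displayed after Eq.~(3.1) in the proof of Proposition~\ref{Iso1}, and although it brings in no new conceptual ingredient, carefully tracking which $q$-binomial factors and which partial products $\b_{n,s;j}$ enter at each stage is the delicate point. Once this identity is established, $\phi^{(s)}_{\b}$ is a coalgebra automorphism, $f_s$ is surjective, and the proposition follows.
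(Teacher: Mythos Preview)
Your proposal is correct and follows essentially the same route as the paper: reduce to surjectivity via Lemma~\ref{f_s-homo}, define $\phi^{(s)}_{\b}$ by exactly the formula you wrote for $a_{n,m,l}$, and verify the analogue of Eq.~(3.1) in the three cases $l=m$, $s\nmid m-l$, and $s\mid m-l$ with $l<m$, the last being a telescoping sum in the partial products $\b_{n,s;\bullet}$. The paper carries out that telescoping explicitly rather than invoking named $q$-binomial identities, but the content is the same.
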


\begin{proof}
By Lemma \ref{f_s-homo}, it is enough to show that $f_s$ is surjective.
The proof is similar to Proposition \ref{Iso1}.

Let $\b=(\b_n)_{n\in\mathbb{Z}}\in k^{\mathbb Z}$.
For any $n\in\mathbb Z$ and $0\<l\<m$, define $a_{n,m,l}\in H_0$ by $a_{n,m,m}=x^n$ for $m\>0$,
$a_{n,m,l}=0$ for $0\<l<m$ with $s\nmid m-l$, and
$$a_{n,m,l}=\left(\begin{array}{c}
m\\
s\\
\end{array}\right)_{q,\frac{m-l}{s}}
(\b_{n,s;\frac{m-l}{s}}x^{n+m-l}-\b_{n,s;\frac{m-l}{s}-1}\b_{n+m-s}x^{n+m-l-s})$$
for $0\<l<m$ with $s|m-l$. Note that $a_{n,m,0}=\left(\begin{array}{c}
m\\
s\\
\end{array}\right)_{q,\frac{m}{s}}
\b_{n,s;\frac{m}{s}}(x^{n+m}-x^{n+m-s})$
if $m>0$ with $s|m$. Hence $\varepsilon(a_{n,m,0})=0$ for all $n\in\mathbb Z$ and
$m>0$. Now define a linear map $\phi^{(s)}_{\b}: H\ra H$ by
$$\phi^{(s)}_{\b}(x^ny^m)=\sum\limits_{l=0}^ma_{n,m,l}y^l,\
n\in\mathbb{Z},\ m\>0.$$
It is easy to see that $\phi^{(s)}_{\b}$ is bijective and
$\e\phi^{(s)}_{\b}(x^ny^m)=\e(x^ny^m)$ for all $n\in\mathbb Z$ and $m\>0$.
In order to show that $\phi^{(s)}_{\b}$ is a coalgebra map, by the proof of Proposition \ref{Iso1},
we only need to show that
Eq.(3.1) holds for all $n\in\mathbb{Z}$  and $0\<j\<l\<m$.

Case1: $l=m$. In this case, an argument same as the proof of Proposition \ref{Iso1} shows that
Eq.(3.1) holds for all $n\in\mathbb{Z}$ and $0\<j\<m$.

Case 2: $0\<j\<l<m$ with $s\nmid m-l$. In this case, $(m-i)-(l-j)=(m-l)-(i-j)$, and so
$s\nmid i-j$ or $s\nmid (m-i)-(l-j)$ for all $j\<i\<m-l+j$.
Hence the both sides of Eq.(3.1) are equal to zero.

Case 3: $0\<j\<l<m$ with $s\mid m-l$. Let $t=\frac{m-l}{s}$ and
$\eta=\left(\begin{array}{c}
l\\
j\\
\end{array}\right)_q\left(\begin{array}{c}
m\\
s\\
\end{array}\right)_{q,t}$ in this case. Then
$$\begin{array}{cl}
&\left(\begin{array}{c}
l\\
j\\
\end{array}\right)_q\Delta(a_{n,m,l})(1\ot x^j)\\
=&\eta
(\b_{n,s;t}x^{n+m-l}\ot x^{n+m-l+j}
-\b_{n,s;t-1}\b_{n+m-s}x^{n+m-l-s}\ot x^{n+m-l-s+j})\\
\end{array}$$
and
$$\begin{array}{cl}
&\sum\limits_{i=j}^{m-l+j}
\left(\begin{array}{c}
m\\
i\\
\end{array}\right)_q a_{n,i,j}\ot a_{n+i,m-i,l-j}\\
=&\sum\limits_{i=0}^t
\left(\begin{array}{c}
m\\
j+is\\
\end{array}\right)_q a_{n,j+is,j}\ot a_{n+j+is,m-j-is,l-j}\\
\end{array}$$
$$\begin{array}{rl}
=&\left(\begin{array}{c}
m\\
j\\
\end{array}\right)_qa_{n,j,j}\ot a_{n+j,m-j,l-j}\\
&+\sum\limits_{0<i<t}
\left(\begin{array}{c}
m\\
j+is\\
\end{array}\right)_q a_{n,j+is,j}\ot a_{n+j+is,m-j-is,l-j}\hspace{2cm}\\
&+\left(\begin{array}{c}
m\\
m-l+j\\
\end{array}\right)_qa_{n,m-l+j,j}\ot a_{n+m-l+j,l-j,l-j}\\
=&\left(\begin{array}{c}
m\\
j\\
\end{array}\right)_q\left(\begin{array}{c}
m-j\\
s\\
\end{array}\right)_{q,t}\\
&\times x^n\ot(\b_{n+j,s;t}x^{n+m-l+j}-\b_{n+j,s;t-1}\b_{n+m-s}x^{n+m-l+j-s})\\
&+\sum\limits_{0<i<t}
\left(\begin{array}{c}
m\\
j+is\\
\end{array}\right)_q\left(\begin{array}{c}
j+is\\
s\\
\end{array}\right)_{q,i}\left(\begin{array}{c}
m-j-is\\
s\\
\end{array}\right)_{q,t-i}\\
&\times(\b_{n,s;i}x^{n+is}-\b_{n,s;i-1}\b_{n+j+is-s}x^{n+is-s})\\
&\ot
(\b_{n+j+is,s;t-i}x^{n+m-l+j}-\b_{n+j+is,s;t-i-1}\b_{n+m-s}x^{n+m-l+j-s})\\
&+\left(\begin{array}{c}
m\\
m-l+j\\
\end{array}\right)_q\left(\begin{array}{c}
m-l+j\\
s\\
\end{array}\right)_{q,t}\\
&\times(\b_{n,s;t}x^{n+m-l}-\b_{n,s;t-1}\b_{n+m-l+j-s}x^{n+m-l-s})\ot x^{n+m-l+j}\\
=&\eta x^n\ot
(\b_{n+j,s;t}x^{n+m-l+j}-\b_{n+j,s;t-1}\b_{n+m-s}x^{n+m-l+j-s})\\
&+\eta\sum\limits_{0<i<t}
(\b_{n,s;i}x^{n+is}-\b_{n,s;i-1}\b_{n+j+is-s}x^{n+is-s})\\
&\ot (\b_{n+j+is,s;t-i}x^{n+m-l+j}-\b_{n+j+is,s;t-i-1}\b_{n+m-s}x^{n+m-l+j-s})\\
&+\eta
(\b_{n,s;t}x^{n+m-l}-\b_{n,s;t-1}\b_{n+m-l+j-s}x^{n+m-l-s})\ot x^{n+m-l+j}\\
=&\eta
(\b_{n,s;t}x^{n+m-l}\ot x^{n+m-l+j}
-\b_{n,s;t-1}\b_{n+m-s}x^{n+m-l-s}\ot x^{n+m-l-s+j}).\\
\end{array}$$
Hence Eq.(3.1) holds in this case.

Thus, we have proved that $\phi^{(s)}_{\b}$ is a coalgebra automorphism of $H$. Obviously,
$\phi^{(s)}_{\b}\in Aut_{s-1}(H)$ and $\phi^{(s)}_{\b}(x^ny^s)=x^ny^s+\b_n(x^{n+s}-x^n)$ for all
$n\in\mathbb Z$. It follows that $f_s(\phi^{(s)}_{\b})=\b$, and so $f_s$ is surjective.
This completes the proof.
\end{proof}

\begin{remark}\label{formula}
Let $s\>2$ and $\b=(\b_n)_{n\in\mathbb Z}\in k^{\mathbb Z}$. For any $n\in\mathbb{Z}$ and $m\>0$,
$\phi^{(s)}_{\b}(x^ny^m)=x^ny^m$ for $0\<m<s$ and
$$\phi^{(s)}_{\b}(x^ny^m)=x^ny^m+\sum\limits_{1\<i\<[\frac{m}{s}]}
\left(\begin{array}{c}
m\\
s\\
\end{array}\right)_{q,i}
(\b_{n,s;i}x^{n+is}-\b_{n,s;i-1}\b_{n+m-s}x^{n+is-s})y^{m-is}$$
for $m\>s$. In particular,
$\phi^{(s)}_{\b}(x^ny^s)=x^ny^s+
\b_n(x^{n+s}-x^n)$ for all $n\in\mathbb{Z}$.
\end{remark}

From Lemma \ref{Semidirect} and Corollary \ref{Iso0}, in order to describe the structure of the group $Aut_c(H)$, we only
need to describe the structure of the normal subgroup $Aut_*(H)$. We have already known that
$Aut_s(H)$ is a normal subgroup of $Aut_*(H)$ for all $s\>1$, and
$$Aut_*(H)\supseteq Aut_1(H)\supseteq Aut_2(H)\supseteq\cdots.$$
Obviously, $\bigcap_{s\>1}Aut_s(H)=\{\rm id_H\}$.

For any $i\>1$, let $\pi_i: Aut_*(H)\ra Aut_*(H)/Aut_i(H)$
be the canonical group epimorphism. If $1\<i\<j$, then
there is a canonical group epimorphism $\pi^j_i: Aut_*(H)/Aut_j(H)\ra Aut_*(H)/Aut_i(H)$
with  ${\rm Ker}(\pi^j_i)=Aut_i(H)/Aut_j(H)$. $\pi^j_i$ is the unique group morphism such that $\pi^j_i\pi_j=\pi_i$.
Obviously, $\pi^j_j={\rm id}$ and $\pi^j_i\pi^l_j=\pi^l_i$ for any $1\<i\<j\<l$.

Let $I$ be the set of all positive integers. For the definition of an inverse system, the reader
is directed to \cite{Rotman}. Then we have the following theorem.

\begin{theorem}\label{limit1}
$\{Aut_*(H)/Aut_i(H), \pi^j_i\}$ is an inverse system of groups with index set $I$. Moreover,
there is a group isomorphism $Aut_*(H)\cong\underleftarrow{\rm lim}(Aut_*(H)/Aut_i(H))$.
\end{theorem}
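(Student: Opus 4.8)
The first assertion is formal. The index set $I$ of positive integers is totally ordered, hence directed, and in the paragraph preceding the theorem it is already recorded that each $\pi^j_i$ (for $1\<i\<j$) is a group homomorphism, that $\pi^j_j={\rm id}$, and that $\pi^j_i\pi^l_j=\pi^l_i$ whenever $1\<i\<j\<l$; these are precisely the defining conditions of an inverse system of groups over $I$, so I would simply invoke them.

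For the isomorphism I would study the canonical homomorphism
$$\Phi: Aut_*(H)\ra\underleftarrow{\rm lim}(Aut_*(H)/Aut_i(H)),\quad\phi\mapsto(\pi_i(\phi))_{i\in I}.$$
It is well defined since $\pi^j_i\pi_j=\pi_i$ makes the family $(\pi_i(\phi))_i$ coherent, and it is a group homomorphism because every $\pi_i$ is. Its kernel is $\bigcap_{i\>1}{\rm Ker}(\pi_i)=\bigcap_{i\>1}Aut_i(H)=\{{\rm id}_H\}$, as noted before the theorem, so $\Phi$ is injective. Thus the real content is the surjectivity of $\Phi$, and this is the step I expect to require actual work.

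To prove surjectivity I would start from a coherent sequence, choose for each $i\>1$ a representative $\phi_i\in Aut_*(H)$ of its $i$-th coordinate, and observe that coherence, $\pi^j_i(\phi_jAut_j(H))=\phi_iAut_i(H)$, says exactly that $\phi_i^{-1}\phi_j\in Aut_i(H)$ for all $j\>i$, i.e.\ that $\phi_i$ and $\phi_j$ agree on $\sum_{l=0}^iH(l)$ whenever $j\>i$. Using the grading $H=\bigoplus_{m\>0}H(m)$ of Lemma \ref{basic}(4), I would then define a linear map $\phi\colon H\ra H$ by $\phi(h)=\phi_i(h)$ for $h\in H(m)$ and any $i\>m$; this is unambiguous because all such $\phi_i$ coincide on $H(m)\subseteq\sum_{l=0}^mH(l)$. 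By construction $\phi$ coincides with $\phi_m$ on each $\sum_{l=0}^mH(l)$.

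It then remains to verify that $\phi$ is a coalgebra automorphism lying in $Aut_*(H)$, and to read off $\Phi(\phi)$. Since every $\phi_i\in Aut_0(H)$, Lemma \ref{inclu} applied to $\phi_m$ and to $\phi_m^{-1}$ shows that $\phi_m$ restricts to a bijection of $\sum_{l=0}^mH(l)$; as $H=\bigcup_{m}\sum_{l=0}^mH(l)$ and $\phi$ agrees with $\phi_m$ on each term, $\phi$ is bijective. The identity $\e\phi=\e$ is immediate on each $H(m)$ from $\e\phi_m=\e$. For comultiplicativity, fix $h\in H(m)$; then $\Delta(h)\in\sum_{i=0}^mH(i)\ot H(m-i)$, and since $\phi$ agrees with $\phi_m$ on every $H(l)$ with $l\<m$ one gets $(\phi\ot\phi)\Delta(h)=(\phi_m\ot\phi_m)\Delta(h)=\Delta\phi_m(h)=\Delta\phi(h)$, using that $\phi_m$ is a coalgebra map. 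Hence $\phi\in Aut_c(H)$; from $\phi(x^n)=\phi_1(x^n)=x^n$ and $\phi(x^ny)=\phi_1(x^ny)=x^ny+\b_n(x^{n+1}-x^n)$ we conclude $\phi\in Aut_*(H)$. Finally, for each $i$ the map $\phi^{-1}\phi_i$ is the identity on $\sum_{l=0}^iH(l)$, so $\phi^{-1}\phi_i\in Aut_i(H)$ and $\pi_i(\phi)=\phi_iAut_i(H)$; that is, $\Phi(\phi)$ is the given coherent sequence. The one genuinely substantive point is assembling the compatible family $(\phi_i)_i$ into a single well-defined bijective coalgebra map, and that is precisely where the grading of $H$ together with Lemma \ref{inclu} does the work.
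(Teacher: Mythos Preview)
Your proposal is correct and follows the same approach as the paper: both build the canonical map $\phi\mapsto(\pi_i(\phi))_i$, get injectivity from $\bigcap_i Aut_i(H)=\{{\rm id}_H\}$, and prove surjectivity by choosing representatives $\phi_i$ and patching them along the filtration to a single $\phi$. Your write-up is actually more explicit than the paper's in verifying bijectivity (you invoke \leref{inclu} for $\phi_m$ and $\phi_m^{-1}$ where the paper uses \leref{firstitem}) and in checking the coalgebra identities, but the argument is the same.
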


\begin{proof}
By the above discussion, $\{Aut_*(H)/Aut_i(H), \pi^j_i\}$ is an inverse system of groups with index set $I$.
It is well-known that the inverse limit $\underleftarrow{\rm lim}(Aut_*(H)/Aut_i(H))$ exists. The group
$G:=\underleftarrow{\rm lim}(Aut_*(H)/Aut_i(H))$ can be described as follows.
Let $\prod_{i\in I}(Aut_*(H)/Aut_i(H))$ be the direct product of the groups $Aut_*(H)/Aut_i(H)$,
and let $p_i: \prod_{i\in I}(Aut_*(H)/Aut_i(H))\ra Aut_*(H)/Aut_i(H)$ be the $i$-th projection for each $i\in I$.
Then
$$G=\{(z_i)_{i\in I}\in\prod\limits_{i\in I}(Aut_*(H)/Aut_i(H))|
\pi^j_i(z_j)=z_i, \mbox{ whenever } i\<j\}.$$
Define $\eta_i: G\ra Aut_*(H)/Aut_i(H)$ as the restriction $p_i|G$. Then by the above discussion,
there is a unique group homomorphism $\rho: Aut_*(H)\ra G$ such that $\eta_i\rho=\pi_i$ for all $i\in I$.
For any $\phi\in Aut_*(H)$,
$$\rho(\phi)=(\pi_i(\phi))_{i\in I}=(\phi Aut_i(H))_{i\in I}.$$
Since $\bigcap_{i\in I}Aut_i(H)=\{\rm id\}$, $\rho$ is injective. Now let
$g=(z_i)_{i\in I}\in G$. Then for any $i\in I$, there is a $\phi_i\in Aut_*(H)$ such that
$z_i=\phi_iAut_i(H)$ in $Aut_*(H)/Aut_i(H)$. Define a linear map $\phi: H\ra H$ by
$\phi(x^n)=x^n$ and $\phi(x^ny^m)=\phi_m(x^ny^m)$ for all $n\in\mathbb Z$ and $m\>1$.
Obviously, $\phi$ is well-defined and independent of the choices of $\phi_i$.
Moreover, for any $n\in\mathbb Z$ and $m\>1$,
$\phi(x^ny^m)=\phi_m(x^ny^m)=x^ny^m+h$ for some $h\in\sum_{l=0}^{m-1}H(l)$ by the definition of
$Aut_*(H)$ and Lemma \ref{firstitem}. It follows that $\phi$ is a $k$-linear automorphism of $H$.
If $1\<i\<j$, then $\phi_iAut_i(H)=z_i=\pi^j_i(z_j)=\pi^j_i(\phi_jAut_j(H))=\pi^j_i\pi_j(\phi_j)
=\pi_i(\phi_j)=\phi_jAut_i(H)$ in $Aut_*(H)/Aut_i(H)$. Hence $\phi(x^n)=x^n=\phi_j(x^n)$
and $\phi(x^ny^i)=\phi_i(x^ny^i)=\phi_j(x^ny^i)$ for all $n\in\mathbb Z$ and $1\<i\<j$. It follows
that $\phi(h)=\phi_j(h)$ for all $h\in\sum_{i=0}^jH(i)$, where $j\>1$. Thus, one can see that
$\phi\in Aut_*(H)$ and $\phi Aut_j(H)=\phi_j Aut_j(H)=z_j$ in $Aut_*(H)/Aut_j(H)$ for all $j\in I$.
This shows that $\rho(\phi)=g$, and so $\rho$ is surjective.
\end{proof}

For any $s\>1$, let $G_s=k^{\mathbb Z}\times\cdots\times k^{\mathbb Z}$ be the Cartesian product set
of $s$-copies of $k^{\mathbb Z}$.
When $s=1$, $G_1=k^{\mathbb Z}$ is an additive group as stated before. From Proposition
\ref{Iso1} and its proof, there is a group isomorphism
$$\Phi_1: G_1\ra Aut_*(H)/Aut_1(H),\ \b^{(1)}\mapsto\phi^{(1)}_{\b^{(1)}}Aut_1(H),$$
where $\b^{(1)}=(\b^{(1)}_n)_{n\in\mathbb Z}\in k^{\mathbb Z}=G_1$ and $\phi^{(1)}_{\b^{(1)}}$
is defined as in the proof of Proposition \ref{Iso1}.
Now let $i>1$. Then one can define a set map
$$\Phi_i: G_i\ra Aut_*(H)/Aut_i(H),\ (\b^{(1)},\b^{(2)},\cdots,\b^{(i)}) \mapsto
\phi^{(1)}_{\b^{(1)}}\phi^{(2)}_{\b^{(2)}}\cdots\phi^{(i)}_{\b^{(i)}}Aut_i(H),$$
where $\b^{(s)}=(\b^{(s)}_n)_{n\in\mathbb Z}\in k^{\mathbb Z}$ for all $1\<s\<i$,
$\phi^{(1)}_{\b^{(1)}}$ is defined as in the proof of Proposition \ref{Iso1}, and $\phi^{(s)}_{\b^{(s)}}$ is defined
as in the proof of Proposition \ref{Isos} for all $2\<s\<i$.

\begin{lemma}\label{strui}
Let $i>1$. Then $\Phi_i: G_i\ra Aut_*(H)/Aut_i(H)$ is a bijection. Consequently,
there exists a unique group structure on $G_i$ such that $\Phi_i$ is a group isomorphism.
\end{lemma}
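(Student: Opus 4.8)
The plan is to establish the bijectivity of $\Phi_i$ by induction on $i$, the case $i=1$ being known from Proposition~\ref{Iso1}, and then to deduce the group structure formally. So assume $i>1$ and that $\Phi_{i-1}: G_{i-1}\ra Aut_*(H)/Aut_{i-1}(H)$ is a bijection. Proposition~\ref{Isos} with $s=i$ gives the exact sequence $1\ra Aut_i(H)\hookrightarrow Aut_{i-1}(H)\xrightarrow{f_i}k^{\mathbb Z}\ra 0$, hence by the first isomorphism theorem $f_i$ induces a group isomorphism $\bar f_i: Aut_{i-1}(H)/Aut_i(H)\ra k^{\mathbb Z}$. Since $\phi^{(i)}_{\b}\in Aut_{i-1}(H)$ and $f_i(\phi^{(i)}_{\b})=\b$ for every $\b\in k^{\mathbb Z}$, the map $T: k^{\mathbb Z}\ra Aut_{i-1}(H)/Aut_i(H)$, $T(\b)=\phi^{(i)}_{\b}Aut_i(H)$, satisfies $\bar f_i\circ T={\rm id}$, so $T=\bar f_i^{-1}$ is a bijection. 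The other ingredient is a commuting square: writing $\pi^i_{i-1}$ for the canonical epimorphism $Aut_*(H)/Aut_i(H)\ra Aut_*(H)/Aut_{i-1}(H)$ (with ${\rm Ker}(\pi^i_{i-1})=Aut_{i-1}(H)/Aut_i(H)$), and using $\phi^{(i)}_{\b^{(i)}}\in Aut_{i-1}(H)$, one computes
$$\pi^i_{i-1}(\Phi_i(\b^{(1)},\ldots,\b^{(i)}))=\phi^{(1)}_{\b^{(1)}}\cdots\phi^{(i-1)}_{\b^{(i-1)}}Aut_{i-1}(H)=\Phi_{i-1}(\b^{(1)},\ldots,\b^{(i-1)}),$$
i.e. $\pi^i_{i-1}\circ\Phi_i=\Phi_{i-1}\circ p$, where $p: G_i\ra G_{i-1}$ drops the last coordinate.

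With these in hand the bijectivity is a diagram chase. For injectivity, if $\Phi_i(\b^{(1)},\ldots,\b^{(i)})=\Phi_i(\g^{(1)},\ldots,\g^{(i)})$, then applying $\pi^i_{i-1}$ and the injectivity of $\Phi_{i-1}$ gives $\b^{(s)}=\g^{(s)}$ for $1\<s\<i-1$; cancelling the common left factor $\phi^{(1)}_{\b^{(1)}}\cdots\phi^{(i-1)}_{\b^{(i-1)}}Aut_i(H)$ in the group $Aut_*(H)/Aut_i(H)$ (legitimate since $Aut_i(H)$ is normal in $Aut_*(H)$ by Lemma~\ref{serial}) yields $T(\b^{(i)})=T(\g^{(i)})$, whence $\b^{(i)}=\g^{(i)}$. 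For surjectivity, given $z\in Aut_*(H)/Aut_i(H)$, choose $(\b^{(1)},\ldots,\b^{(i-1)})\in G_{i-1}$ with $\Phi_{i-1}(\b^{(1)},\ldots,\b^{(i-1)})=\pi^i_{i-1}(z)$; then $w:=\phi^{(1)}_{\b^{(1)}}\cdots\phi^{(i-1)}_{\b^{(i-1)}}Aut_i(H)$ has $\pi^i_{i-1}(w)=\pi^i_{i-1}(z)$, so $w^{-1}z\in{\rm Ker}(\pi^i_{i-1})$, and surjectivity of $T$ provides $\b^{(i)}$ with $T(\b^{(i)})=w^{-1}z$; then $\Phi_i(\b^{(1)},\ldots,\b^{(i)})=w\,T(\b^{(i)})=z$.

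Once $\Phi_i$ is known to be a bijection, the ``consequently'' clause is automatic: transporting the group operation of $Aut_*(H)/Aut_i(H)$ along $\Phi_i$, i.e. declaring the product of $g,h\in G_i$ to be $\Phi_i^{-1}(\Phi_i(g)\Phi_i(h))$, defines the unique group structure on $G_i$ for which $\Phi_i$ is an isomorphism. I do not expect a real obstacle here; the only points deserving attention are that $T$ is a genuine bijection onto $Aut_{i-1}(H)/Aut_i(H)$ (we use only this, not that it is a homomorphism) and that the left-cancellation step takes place in an honest quotient group, which is guaranteed by the normality of $Aut_i(H)$ in $Aut_*(H)$.
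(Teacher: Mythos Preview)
Your proof is correct and follows essentially the same strategy as the paper's: both arguments peel off one coordinate at a time using that $\phi^{(s)}_{\b^{(s)}}\in Aut_{s-1}(H)$ and $f_s(\phi^{(s)}_{\b^{(s)}})=\b^{(s)}$ from Propositions~\ref{Iso1} and~\ref{Isos}. The only difference is packaging: the paper fixes $i$ and runs an inner induction on $s$ (computing explicitly on $x^ny^s$), whereas you organize the same idea as an outer induction on $i$ together with the diagram chase through $\pi^i_{i-1}$ and the section $T$ of $\bar f_i$; both are equally valid, and your version makes the role of the exact sequence and the normality of $Aut_i(H)$ a bit more transparent.
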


\begin{proof}
For any $(\b^{(1)},\b^{(2)},\cdots,\b^{(i)})\in G_i$, it follows from the proofs of Propositions \ref{Iso1}
and \ref{Isos} that $\phi^{(s)}_{\b^{(s)}}\in Aut_*(H)$ for $1\<s\<i$, and that
$\phi^{(s)}_{\b^{(s)}}\in Aut_{s-1}(H)$ for $1<s\<i$. Hence $\Phi_i$ is
well-defined, and
$(\phi^{(1)}_{\b^{(1)}}\phi^{(2)}_{\b^{(2)}}\cdots\phi^{(i)}_{\b^{(i)}})(x^ny^s)
=(\phi^{(1)}_{\b^{(1)}}\cdots\phi^{(s)}_{\b^{(s)}})(x^ny^s)$ for all $n\in\mathbb Z$ and
$1\<s\<i$.

Let $(\b^{(1)}, \b^{(2)}, \cdots, \b^{(i)}), (\g^{(1)}, \g^{(2)}, \cdots, \g^{(i)})\in G_i$, and assume that $$\phi^{(1)}_{\b^{(1)}}\phi^{(2)}_{\b^{(2)}}\cdots\phi^{(i)}_{\b^{(i)}}Aut_i(H)
=\phi^{(1)}_{\g^{(1)}}\phi^{(2)}_{\g^{(2)}}\cdots\phi^{(i)}_{\g^{(i)}}Aut_i(H),$$
in $Aut_*(H)/Aut_i(H)$. Then
$\phi^{(1)}_{\b^{(1)}}\phi^{(2)}_{\b^{(2)}}\cdots\phi^{(i)}_{\b^{(i)}}(h)
=\phi^{(1)}_{\g^{(1)}}\phi^{(2)}_{\g^{(2)}}\cdots\phi^{(i)}_{\g^{(i)}}(h)$
for all $h\in\sum_{j=0}^iH(j)$. Hence for all $n\in\mathbb Z$, we have
$$\begin{array}{rcl}
\phi^{(1)}_{\b^{(1)}}(x^ny)&=&\phi^{(1)}_{\b^{(1)}}\phi^{(2)}_{\b^{(2)}}\cdots\phi^{(i)}_{\b^{(i)}}(x^ny)\\
&=&\phi^{(1)}_{\g^{(1)}}\phi^{(2)}_{\g^{(2)}}\cdots\phi^{(i)}_{\g^{(i)}}(x^ny)\\
&=&\phi^{(1)}_{\g^{(1)}}(x^ny).\\
\end{array}$$
That is, $x^ny+\b^{(1)}_n(x^{n+1}-x^n)=x^ny+\g^{(1)}_n(x^{n+1}-x^n)$ for all $n\in\mathbb Z$.
Thus, $\b^{(1)}_n=\g^{(1)}_n$ for all $n\in\mathbb Z$, and so $\b^{(1)}=\g^{(1)}$.
Let $1\<s<i$ and suppose $\b^{(j)}=\g^{(j)}$ for all $1\<j\<s$. Then
$\phi^{(j)}_{\b^{(j)}}=\phi^{(j)}_{\g^{(j)}}$ for all $1\<j\<s$. It follows that
$\phi^{(s+1)}_{\b^{(s+1)}}\cdots\phi^{(i)}_{\b^{(i)}}(h)
=\phi^{(s+1)}_{\g^{(s+1)}}\cdots\phi^{(i)}_{\g^{(i)}}(h)$
for all $h\in\sum_{j=0}^iH(j)$. Hence for all $n\in\mathbb Z$, we have
$$\begin{array}{rcl}
\phi^{(s+1)}_{\b^{(s+1)}}(x^ny^{s+1})&=&\phi^{(s+1)}_{\b^{(s+1)}}\cdots\phi^{(i)}_{\b^{(i)}}(x^ny^{s+1})\\
&=&\phi^{(s+1)}_{\g^{(s+1)}}\cdots\phi^{(i)}_{\g^{(i)}}(x^ny^{s+1})\\
&=&\phi^{(s+1)}_{\g^{(s+1)}}(x^ny^{s+1}).\\
\end{array}$$
That is, $x^ny^{s+1}+\b^{(s+1)}_n(x^{n+s+1}-x^n)=x^ny^{s+1}+\g^{(s+1)}_n(x^{n+s+1}-x^n)$ for all $n\in\mathbb Z$
by Remark \ref{formula}. Thus, $\b^{(s+1)}_n=\g^{(s+1)}_n$ for all $n\in\mathbb Z$, and so $\b^{(s+1)}=\g^{(s+1)}$.
Therefore, $\b^{(j)}=\g^{(j)}$ for all $1\<j\<i$. This shows that $\Phi_i$ is injective.

Now assume $\phi\in Aut_*(H)$. Let $\b^{(1)}=f_1(\phi)\in k^{\mathbb Z}$. Then by Proposition \ref{Iso1} and its proof,
$f_1(\phi^{(1)}_{\b^{(1)}})=f_1(\phi)$, and hence $(\phi^{(1)}_{\b^{(1)}})^{-1}\phi\in Aut_1(H)$.
Then let $\b^{(2)}=f_2((\phi^{(1)}_{\b^{(1)}})^{-1}\phi)\in k^{\mathbb Z}$. Then by Proposition \ref{Isos} and its proof,
$f_2(\phi^{(2)}_{\b^{(2)}})=f_2((\phi^{(1)}_{\b^{(1)}})^{-1}\phi)$,
and hence $(\phi^{(2)}_{\b^{(2)}})^{-1}(\phi^{(1)}_{\b^{(1)}})^{-1}\phi\in Aut_2(H)$.
Let $2\<s<i$ and suppose that we have found
$\b^{(1)}, \cdots, \b^{(s)}\in k^{\mathbb Z}$ such that
$(\phi^{(s)}_{\b^{(s)}})^{-1}\cdots(\phi^{(1)}_{\b^{(1)}})^{-1}\phi\in Aut_s(H)$.
Let $\b^{(s+1)}=f_{s+1}((\phi^{(s)}_{\b^{(s)}})^{-1}\cdots(\phi^{(1)}_{\b^{(1)}})^{-1}\phi)\in k^{\mathbb Z}$.
Then it follows from Proposition \ref{Isos} and its proof that
$f_{s+1}(\phi^{(s+1)}_{\b^{(s+1)}})=f_{s+1}((\phi^{(s)}_{\b^{(s)}})^{-1}\cdots(\phi^{(1)}_{\b^{(1)}})^{-1}\phi)$,
and so $(\phi^{(s+1)}_{\b^{(s+1)}})^{-1}(\phi^{(s)}_{\b^{(s)}})^{-1}\cdots(\phi^{(1)}_{\b^{(1)}})^{-1}\phi\in Aut_{s+1}(H)$.
Thus, we have proved that there are $\b^{(1)}, \b^{(2)}, \cdots, \b^{(i)}\in k^{\mathbb Z}$ such that
$(\phi^{(i)}_{\b^{(i)}})^{-1}\cdots(\phi^{(2)}_{\b^{(2)}})^{-1}(\phi^{(1)}_{\b^{(1)}})^{-1}\phi\in Aut_i(H)$.
Hence $\phi^{(1)}_{\b^{(1)}}\phi^{(2)}_{\b^{(2)}}\cdots \phi^{(i)}_{\b^{(i)}}Aut_i(H)
=\phi Aut_i(H)$, which implies that $\Phi_i$ is surjective.
\end{proof}

As stated before, $G_1$ is an additive group as the direct product $k^{\mathbb Z}$ of
$\mathbb Z$-copies of the additive group $k$.
For $i>1$, the operation of the group $G_i$ can be described as follows.

For $\b=(\b^{(1)}, \b^{(2)}, \cdots, \b^{(i)}), \g=(\g^{(1)}, \g^{(2)}, \cdots, \g^{(i)})\in G_i$,
let $\varphi_{\b}=\phi^{(1)}_{\b^{(1)}}\phi^{(2)}_{\b^{(2)}}\cdots \phi^{(i)}_{\b^{(i)}}$ and
$\varphi_{\g}=\phi^{(1)}_{\g^{(1)}}\phi^{(2)}_{\g^{(2)}}\cdots \phi^{(i)}_{\g^{(i)}}$. Then for any
$n\in\mathbb Z$, we have
$$\varphi_{\b}\varphi_{\g}(x^ny)=\phi^{(1)}_{\b^{(1)}}\phi^{(1)}_{\g^{(1)}}(x^ny)
=x^ny+(\b^{(1)}_n+\g^{(1)}_n)(x^{n+1}-x^n).$$
Let $\d^{(1)}=\b^{(1)}+\g^{(1)}$. Then $(\phi^{(1)}_{\d^{(1)}})^{-1}\varphi_{\b}\varphi_{\g}\in Aut_1(H)$.
Then following the procedure of the proof of Lemma \ref{strui}, one can recursively get
$\d^{(1)}, \d^{(2)}, \cdots, \d^{(i)}\in k^{\mathbb Z}$ such that
$(\phi^{(j)}_{\d^{(j)}})^{-1}\cdots(\phi^{(1)}_{\d^{(1)}})^{-1}\varphi_{\b}\varphi_{\g}\in Aut_j(H)$
for all $1\<j\<i$. Then the multiplication of the group $G_i$ is given by
$$(\b^{(1)}, \b^{(2)}, \cdots, \b^{(i)})(\g^{(1)}, \g^{(2)}, \cdots, \g^{(i)})=
(\d^{(1)}, \d^{(2)}, \cdots, \d^{(i)}).$$
Summarizing the above discussion, one gets the following corollary.

\begin{corollary}\label{producti}
Let $i>1$ and $(\b^{(1)}, \b^{(2)}, \cdots, \b^{(i)}), (\g^{(1)}, \g^{(2)}, \cdots, \g^{(i)})\in G_i$.
The multiplication
$$(\b^{(1)}, \b^{(2)}, \cdots, \b^{(i)})(\g^{(1)}, \g^{(2)}, \cdots, \g^{(i)})=
(\d^{(1)}, \d^{(2)}, \cdots, \d^{(i)})$$
in $G_i$ is determined recursively by $\d^{(1)}=\b^{(1)}+\g^{(1)}$ and
$$\d^{(j)}_n(x^{n+j}-x^n)=\varphi_{\b}\varphi_{\g}(x^ny^{j})
-\phi^{(1)}_{\d^{(1)}}\cdots\phi^{(j-1)}_{\d^{(j-1)}}(x^ny^j)$$
for all $1<j\<i$ and $n\in\mathbb Z$, where $\varphi_{\b}$ and $\varphi_{\g}$ are given as above.
\end{corollary}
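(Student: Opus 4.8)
Corollary~\ref{producti} merely records explicitly the group law that the bijection $\Phi_i$ of Lemma~\ref{strui} transports from $Aut_*(H)/Aut_i(H)$ onto $G_i$, so my plan is to re-run the recursion from the surjectivity part of that proof on the particular element $\varphi_{\b}\varphi_{\g}$ and then read off each $\d^{(j)}$. By construction $\Phi_i(\b)=\varphi_{\b}Aut_i(H)$, and the group structure on $G_i$ is the unique one making $\Phi_i$ an isomorphism; hence $\b\g$ is the unique element of $G_i$ with $\varphi_{\b\g}Aut_i(H)=\varphi_{\b}\varphi_{\g}Aut_i(H)$. Applying the recursive procedure of Lemma~\ref{strui} to $\varphi_{\b}\varphi_{\g}\in Aut_*(H)$ yields $\d^{(1)},\dots,\d^{(i)}\in k^{\mathbb Z}$ with $(\phi^{(j)}_{\d^{(j)}})^{-1}\cdots(\phi^{(1)}_{\d^{(1)}})^{-1}\varphi_{\b}\varphi_{\g}\in Aut_j(H)$ for $1\leq j\leq i$; in particular $\varphi_{\d}Aut_i(H)=\varphi_{\b}\varphi_{\g}Aut_i(H)$, i.e. $(\d^{(1)},\dots,\d^{(i)})=\b\g$, and it remains only to compute the $\d^{(j)}$.

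For the first coordinate I would argue as follows: every $\phi^{(s)}_{\e}$ fixes each group-like $x^m$, and by Remark~\ref{formula} also fixes $x^ny$ when $s\geq 2$; hence $\varphi_{\b}\varphi_{\g}(x^ny)=\phi^{(1)}_{\b^{(1)}}\phi^{(1)}_{\g^{(1)}}(x^ny)=x^ny+(\b^{(1)}_n+\g^{(1)}_n)(x^{n+1}-x^n)$ for all $n$, so $f_1(\varphi_{\b}\varphi_{\g})=\b^{(1)}+\g^{(1)}$, and by Proposition~\ref{Iso1} this forces $\d^{(1)}=\b^{(1)}+\g^{(1)}$ together with $(\phi^{(1)}_{\d^{(1)}})^{-1}\varphi_{\b}\varphi_{\g}\in Aut_1(H)$, which starts the recursion.

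For $1<j\leq i$, set $\psi_{j-1}:=(\phi^{(j-1)}_{\d^{(j-1)}})^{-1}\cdots(\phi^{(1)}_{\d^{(1)}})^{-1}\varphi_{\b}\varphi_{\g}$, which by the earlier steps lies in $Aut_{j-1}(H)$; then by definition $\d^{(j)}=f_j(\psi_{j-1})$, so by Lemma~\ref{0item} we have $\psi_{j-1}(x^ny^j)=x^ny^j+\d^{(j)}_n(x^{n+j}-x^n)$. Applying the linear map $\phi^{(1)}_{\d^{(1)}}\cdots\phi^{(j-1)}_{\d^{(j-1)}}$ to both sides and using that this composite fixes the group-likes $x^m$ (hence fixes $x^{n+j}-x^n$) gives $\varphi_{\b}\varphi_{\g}(x^ny^j)=\phi^{(1)}_{\d^{(1)}}\cdots\phi^{(j-1)}_{\d^{(j-1)}}(x^ny^j)+\d^{(j)}_n(x^{n+j}-x^n)$, which rearranges to the asserted recursion. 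One closes the induction as in Lemma~\ref{strui}: since $f_j$ is a homomorphism with kernel $Aut_j(H)$ (Lemma~\ref{f_s-homo}) and $f_j(\phi^{(j)}_{\d^{(j)}})=\d^{(j)}$ by Proposition~\ref{Isos} and Remark~\ref{formula}, we get $(\phi^{(j)}_{\d^{(j)}})^{-1}\psi_{j-1}\in Aut_j(H)$, so the construction proceeds up to $j=i$. The only delicate point is the recursive bookkeeping guaranteeing that each $\psi_{j-1}$ genuinely lies in $Aut_{j-1}(H)$ so that $f_j$ may be applied; but this is precisely the mechanism already established in the proof of Lemma~\ref{strui}, so no new obstacle arises and the argument is otherwise routine.
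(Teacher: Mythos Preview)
Your proof is correct and follows essentially the same approach as the paper: the corollary is stated as a summary of the discussion preceding it, which applies the recursive procedure from the surjectivity part of Lemma~\ref{strui} to the element $\varphi_{\b}\varphi_{\g}$. Your argument is in fact somewhat more explicit than the paper's discussion, since you spell out precisely how the formula $\d^{(j)}_n(x^{n+j}-x^n)=\varphi_{\b}\varphi_{\g}(x^ny^{j})-\phi^{(1)}_{\d^{(1)}}\cdots\phi^{(j-1)}_{\d^{(j-1)}}(x^ny^j)$ arises by applying $\phi^{(1)}_{\d^{(1)}}\cdots\phi^{(j-1)}_{\d^{(j-1)}}$ to both sides of $\psi_{j-1}(x^ny^j)=x^ny^j+\d^{(j)}_n(x^{n+j}-x^n)$ and using that the composite fixes group-likes.
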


%
%

For any $1\<i\<j$, define a map $\psi^j_i: G_j\ra G_i$ by
$$\psi^j_i(\b^{(1)}, \b^{(2)}, \cdots, \b^{(j)})=(\b^{(1)}, \b^{(2)}, \cdots, \b^{(i)}).$$
Then by the above discussion, one can see that $\psi^j_i$ is a group epimorphism
such that the following diagram
$$\begin{array}{ccc}
G_j&\xrightarrow{\Phi_j}&Aut_*(H)/Aut_j(H)\\
\psi^j_i\downarrow\quad &&\downarrow\pi^j_i\\
G_i&\xrightarrow{\Phi_i}&Aut_*(H)/Aut_i(H)\\
\end{array}$$
commutes. Thus, we have the following theorem.

\begin{theorem}\label{limit2}
$\{G_i, \psi^j_i\}$ is an inverse system of groups with index set $I$. Moreover,
there is a group isomorphism $Aut_*(H)\cong\underleftarrow{\rm lim}G_i$.
\end{theorem}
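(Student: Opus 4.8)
The plan is to deduce \thref{limit2} from \thref{limit1} by transporting everything across the bijections $\Phi_i$ of \leref{strui}, using the commutative square established just before the statement. First I would check the two axioms of an inverse system for $\{G_i,\psi^j_i\}$. The maps $\psi^j_i$ are defined by truncation of tuples, so $\psi^i_i={\rm id}_{G_i}$ and $\psi^j_i\psi^l_j=\psi^l_i$ for all $1\<i\<j\<l$ hold on the nose; alternatively, since each $\Phi_i$ is a bijection and the square commutes, these identities are forced by the corresponding identities $\pi^i_i={\rm id}$ and $\pi^j_i\pi^l_j=\pi^l_i$ for the inverse system $\{Aut_*(H)/Aut_i(H),\pi^j_i\}$ of \thref{limit1}. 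Combined with the already-noted fact that each $\psi^j_i$ is a group epimorphism, this shows $\{G_i,\psi^j_i\}$ is an inverse system of groups with index set $I$.

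Next I would observe that the family $\{\Phi_i:G_i\ra Aut_*(H)/Aut_i(H)\}_{i\in I}$ is a morphism of inverse systems: each $\Phi_i$ is a group isomorphism by \leref{strui}, and the square $\pi^j_i\circ\Phi_j=\Phi_i\circ\psi^j_i$ commutes for all $i\<j$. A morphism of inverse systems whose components are all isomorphisms induces an isomorphism on inverse limits; concretely, the map $(z_i)_{i\in I}\mapsto(\Phi_i(z_i))_{i\in I}$ sends $\underleftarrow{\rm lim}\,G_i$ into $\underleftarrow{\rm lim}(Aut_*(H)/Aut_i(H))$ (the compatibility conditions are preserved precisely because of the commuting square), is a group homomorphism by the definition of the group structures, and has the evident inverse $(w_i)_{i\in I}\mapsto(\Phi_i^{-1}(w_i))_{i\in I}$. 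Composing this isomorphism with the isomorphism $Aut_*(H)\cong\underleftarrow{\rm lim}(Aut_*(H)/Aut_i(H))$ of \thref{limit1} gives the desired $Aut_*(H)\cong\underleftarrow{\rm lim}\,G_i$.

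I do not anticipate a real obstacle: the substantive content — constructing the group structure on $G_i$, proving $\Phi_i$ is a group isomorphism, and verifying the commuting square — was already carried out in \leref{strui} and the discussion following it. The only point requiring care is the formal verification that the induced map $(z_i)\mapsto(\Phi_i(z_i))$ is well defined and bijective between the subgroups of $\prod_{i}G_i$ and $\prod_i(Aut_*(H)/Aut_i(H))$ cut out by the respective compatibility relations; this is a routine diagram chase with the square $\pi^j_i\Phi_j=\Phi_i\psi^j_i$, and I would also unwind it slightly, as in the proof of \thref{limit1}, to give the explicit description of $\underleftarrow{\rm lim}\,G_i$ as the set of compatible tuples $(\b^{[i]})_{i\in I}$ with $\b^{[i]}\in G_i$ and $\psi^j_i(\b^{[j]})=\b^{[i]}$.
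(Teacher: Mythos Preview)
Your proposal is correct and follows essentially the same approach as the paper: the paper's proof consists of the single line ``It follows from Theorem~\ref{limit1} and Lemma~\ref{strui},'' and you have simply unpacked that citation by spelling out how the commuting squares $\pi^j_i\Phi_j=\Phi_i\psi^j_i$ transport the inverse system and its limit across the isomorphisms $\Phi_i$.
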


\begin{proof} It follows from Theorem \ref{limit1} and Lemma \ref{strui}.
\end{proof}

From the proof of Theorem \ref{limit1}, the inverse limit $\underleftarrow{\rm lim}G_i$
can be described as follows. Let $\prod_{i\in I}G_i$ be the direct product of the groups
$G_i$, and let $p_i: \prod_{i\in I}G_i\ra G_i$ be the $i$-th projection.
Then $\underleftarrow{\rm lim}G_i$ is a subgroup of $\prod_{i\in I}G_i$ defined by
$$\underleftarrow{\rm lim}G_i=\{(g_i)_{i\in I}\in\prod\limits_{i\in I}G_i|\psi^j_i(g_j)=g_i,
\mbox{ whenever } 1\<i\<j\}.$$
Let $q_i: \underleftarrow{\rm lim}G_i\ra G_i$ be the restriction $p_i|\underleftarrow{\rm lim}G_i$.
Then $\psi^j_iq_j=q_i$ for all $1\<i\<j$.

For any $i\in I$, define a map $\s_i: G_i\ra k^{\mathbb Z}$ by
$\s_i(\b^{(1)}, \cdots, \b^{(i)})=\b^{(i)}$, i.e., $\s_i$ is the $i$-th projection.
Let $G_{\infty}=(k^{\mathbb Z})^I$ be the Cartesian
product set of $I$-copies of $k^{\mathbb Z}$, i.e.,
$$G_{\infty}=\{(\b^{(1)}, \b^{(2)}, \cdots)|\b^{(i)}\in k^{\mathbb Z} \mbox{ for all }i\in I\}.$$
For any $i\in I$, define a map $\tau_i: G_{\infty}\ra G_i$ by
$\tau_i(\b^{(1)}, \b^{(2)}, \cdots)=(\b^{(1)}, \cdots, \b^{(i)})$.
Then $\psi^j_i\tau_j=\tau_i$ for all $1\<i\<j$.

Obviously, there is a bijective map $\tau: G_{\infty}\ra\underleftarrow{\rm lim}G_i$ defined by
$$\tau(z)=(\tau_i(z))_{i\in I}=(\tau_1(z), \tau_2(z), \cdots)$$
for all $z=(z_i)_{i\in I}=(z_1, z_2, \cdots)\in G_{\infty}$. The inverse $\tau^{-1}$ is given by
$$\tau^{-1}(g)=(\s_iq_i(g))_{i\in I}=(\s_i(g_i))_{i\in I}=(\s_1(g_1), \s_2(g_2), \cdots)$$
for all $g=(g_i)_{i\in I}=(g_1, g_2, \cdots)\in\underleftarrow{\rm lim}G_i$.
Note that $q_i\tau=\tau_i$ for all $i\in I$.
Hence there is a unique group structure on $G_{\infty}$ such that $\tau$ is a group isomorphism.
In this case, $\tau_i$ is a group epimorphism for all $i\in I$.
Thus, from Theorem \ref{limit2}, we have proved the following theorem.

\begin{theorem}\label{Iso*}
There is a group isomorphism $Aut_*(H)\cong G_{\infty}$.
\end{theorem}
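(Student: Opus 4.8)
The plan is to obtain this isomorphism by transporting the group structure of $\underleftarrow{\rm lim}G_i$ along the bijection $\tau\colon G_{\infty}\ra\underleftarrow{\rm lim}G_i$ built in the paragraph preceding the statement, and then composing with the isomorphism of Theorem~\ref{limit2}. Nothing new about the internal structure of $Aut_*(H)$ is needed; the theorem is the formal consequence of the machinery already in place.

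First I would record the chain of maps
$$Aut_*(H)\ \xrightarrow{\ \cong\ }\ \underleftarrow{\rm lim}G_i\ \xrightarrow{\ \tau^{-1}\ }\ G_{\infty},$$
in which the first arrow is the isomorphism furnished by Theorem~\ref{limit2} (itself assembled from the isomorphism $\rho$ of Theorem~\ref{limit1} and the bijections $\Phi_i$ of Lemma~\ref{strui}), and $\tau^{-1}(g)=(\s_iq_i(g))_{i\in I}$ is the explicit inverse of $\tau$. Second, I would invoke the fact, recorded just before the statement, that $G_{\infty}$ carries the unique group structure for which $\tau$ is a group isomorphism, namely $z\cdot z'=\tau^{-1}(\tau(z)\tau(z'))$; this is well-defined precisely because $\tau$ is a bijection onto a group, and with this operation $\tau$ --- and hence each $\tau_i=q_i\tau$ --- is a group homomorphism. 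The explicit recursive description of this operation on the finite truncations $G_i$ is the one of Corollary~\ref{producti}. Third, composing the two group isomorphisms above yields $Aut_*(H)\cong G_{\infty}$.

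I do not expect a genuine obstacle here: the real work lies in the earlier results --- Lemmas~\ref{serial}, \ref{0item}, \ref{f_s-homo}, \ref{strui}, Propositions~\ref{Iso1} and \ref{Isos}, and Theorems~\ref{limit1} and \ref{limit2} --- which together produce the descending normal series with $\bigcap_{s\>1}Aut_s(H)=\{{\rm id}_H\}$, the short exact sequences $1\ra Aut_s(H)\ra Aut_{s-1}(H)\xrightarrow{f_s}k^{\mathbb Z}\ra 0$, the bijections $\Phi_i$, and the identification $Aut_*(H)\cong\underleftarrow{\rm lim}G_i$. The only points still needing a line of checking are purely formal: that $\psi^j_i\tau_j=\tau_i$ for $1\<i\<j$, so that $\tau$ indeed maps into $\underleftarrow{\rm lim}G_i$ (immediate from $\tau_i(\b^{(1)},\b^{(2)},\cdots)=(\b^{(1)},\cdots,\b^{(i)})$ and the definition of $\psi^j_i$), and that $\tau$ is bijective with the stated inverse, which holds because a compatible family $(g_i)_{i\in I}$ in $\prod_{i\in I}G_i$ is reconstructed from the sequence $(\s_i(g_i))_{i\in I}$. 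So the proposal reduces to citing Theorem~\ref{limit2} together with the preceding construction and assembling the composite.
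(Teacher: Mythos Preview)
Your proposal is correct and follows essentially the same approach as the paper: the theorem is obtained by composing the isomorphism $Aut_*(H)\cong\underleftarrow{\rm lim}G_i$ of Theorem~\ref{limit2} with the bijection $\tau\colon G_{\infty}\ra\underleftarrow{\rm lim}G_i$, along which the group structure is transported. The paper presents this as an immediate consequence of the construction preceding the statement, just as you do.
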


Explicitly, we can give a group isomorphism $\Phi: G_{\infty}\ra Aut_*(H)$ as in the proof of Lemma \ref{strui}.
For $(\b^{(1)}, \b^{(2)}, \cdots)\in G_{\infty}$, $\phi:=\Phi(\b^{(1)}, \b^{(2)}, \cdots)$ is given by
$$\phi(x^n)=x^n, \ \phi(x^ny^m)=\phi^{(1)}_{\b^{(1)}}\cdots\phi^{(m)}_{\b^{(m)}}(x^ny^m)$$
for all $n\in\mathbb Z$ and $m\>1$. Conversely, for $\phi\in Aut_*(H)$,
$(\b^{(1)}, \b^{(2)}, \cdots):=\Phi^{-1}(\phi)$ is given recursively by
$\b^{(1)}_n(x^{n+1}-x^n)=\phi(x^ny)-x^ny$ and
$$\b^{(i)}_n(x^{n+i}-x^n)=\phi(x^ny^i)-\phi^{(1)}_{\b^{(1)}}\cdots\phi^{(i-1)}_{\b^{(i-1)}}(x^ny^i)$$
for $i>1$, where $n\in\mathbb Z$.

\begin{remark}
By Corollary \ref{producti}, the multiplication of the group $G_{\infty}$ can be described as follows:
for $(\b^{(1)}, \b^{(2)}, \cdots), (\g^{(1)}, \g^{(2)}, \cdots)\in G_{\infty}$,
the multiplication
$$(\b^{(1)}, \b^{(2)}, \cdots)(\g^{(1)}, \g^{(2)}, \cdots)=
(\d^{(1)}, \d^{(2)}, \cdots)$$
is determined recursively by $\d^{(1)}=\b^{(1)}+\g^{(1)}$ and
$$\d^{(i)}_n(x^{n+i}-x^n)=\phi^{(1)}_{\b^{(1)}}\cdots\phi^{(i)}_{\b^{(i)}}
\phi^{(1)}_{\g^{(1)}}\cdots\phi^{(i)}_{\g^{(i)}}(x^ny^{i})
-\phi^{(1)}_{\d^{(1)}}\cdots\phi^{(i-1)}_{\d^{(i-1)}}(x^ny^i)$$
for all $i>1$ and $n\in\mathbb Z$.

By a tedious computation, one can get the formula of $\d^{(2)}$ and $\d^{(3)}$ as follows:.
$$\begin{array}{ccl}
\d^{(2)}&=&b^{(2)}+\g^{(2)}-(2)_q\b^{(1)}\g^{(1)}[1]),\\
\d^{(3)}&=&b^{(3)}+\g^{(3)}-(3)_q(\b^{(2)}\g^{(1)}[1]-\b^{(2)}[1]\g^{(1)})\\
&&-(3)!_q(\b^{(1)}+\g^{(1)})\b^{(1)}[1]\g^{(1)}[2].\\
\end{array}$$
Thus, one knows the formulas of the multiplications of $G_2$ and $G_3$. However,
it is difficult to give the explicit formulae of $\d^{(i)}$ for larger $i$.
\end{remark}

\begin{lemma}\label{action1}
Let $\a=(\a_n)_{n\in\mathbb Z}\in(k^{\times})^{\mathbb Z}$, $\b=(\b_n)_{n\in\mathbb Z}\in k^{\mathbb Z}$ and $r\in\mathbb Z$. Then
$(\phi_{\a}\theta_r)\phi^{(1)}_{\b}(\phi_{\a}\theta_r)^{-1}=\phi^{(1)}_{\a^{-1}\b[-r]}$. In particular,
$\phi_{\a}\phi^{(1)}_{\b}\phi_{\a}^{-1}=\phi^{(1)}_{\a^{-1}\b}$.
\end{lemma}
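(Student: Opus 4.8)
The plan is to verify the asserted equality on the $k$-basis $\{x^ny^m\mid n\in\mathbb Z,\ m\in\mathbb N\}$ of $H$ from Lemma~\ref{basic}(1), since both $(\phi_{\a}\theta_r)\phi^{(1)}_{\b}(\phi_{\a}\theta_r)^{-1}$ and $\phi^{(1)}_{\a^{-1}\b[-r]}$ are coalgebra automorphisms of $H$. First I would fix notation and record the elementary formulas. Writing $A_{n,m}=\prod_{i=0}^{m-1}\a_{n+i}$ (with $A_{n,0}=1$), Theorem~\ref{iso-gr-0} gives $\phi_{\a}(x^ny^m)=A_{n,m}x^ny^m$, and together with Lemma~\ref{Zpart} this yields $(\phi_{\a}\theta_r)(x^sy^l)=A_{s+r,l}x^{s+r}y^l$ for all $s\in\mathbb Z$, $l\in\mathbb N$, hence $(\phi_{\a}\theta_r)^{-1}(x^ny^m)=A_{n,m}^{-1}x^{n-r}y^m$. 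I would also recall from the proof of Proposition~\ref{Iso1} the explicit form $\phi^{(1)}_{\b}(x^Ny^m)=\sum_{l=0}^m a_{N,m,l}y^l$, where $a_{N,m,m}=x^N$ and $a_{N,m,l}=(m,m-l)_q(\b_{N,m-l}x^{N+m-l}-\b_{N,m-l-1}\b_{N+m-1}x^{N+m-l-1})$ for $0\le l<m$.

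Next I would apply the three maps to $x^ny^m$ in succession. Starting from $(\phi_{\a}\theta_r)^{-1}(x^ny^m)=A_{n,m}^{-1}x^{n-r}y^m$, applying $\phi^{(1)}_{\b}$ gives $A_{n,m}^{-1}\sum_{l=0}^m a_{n-r,m,l}y^l$, and then applying $\phi_{\a}\theta_r$ — which multiplies $x^{n-r+m-l}y^l$ by $A_{n+m-l,l}$ and turns it into $x^{n+m-l}y^l$ — produces
$$(\phi_{\a}\theta_r)\phi^{(1)}_{\b}(\phi_{\a}\theta_r)^{-1}(x^ny^m)=x^ny^m+\sum_{l=0}^{m-1}(m,m-l)_q\Big(\b_{n-r,m-l}\tfrac{A_{n+m-l,l}}{A_{n,m}}x^{n+m-l}-\b_{n-r,m-l-1}\b_{n+m-1-r}\tfrac{A_{n+m-l-1,l}}{A_{n,m}}x^{n+m-l-1}\Big)y^l.$$
On the other hand, setting $\g=\a^{-1}\b[-r]$ (so $\g_N=\a_N^{-1}\b_{N-r}$) and using the same formula for $\phi^{(1)}$, the monomials $x^{n+m-l}y^l$ and $x^{n+m-l-1}y^l$ occurring in $\phi^{(1)}_{\g}(x^ny^m)$ carry the coefficients $(m,m-l)_q\g_{n,m-l}$ and $-(m,m-l)_q\g_{n,m-l-1}\g_{n+m-1}$. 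Since all these monomials (for $0\le l<m$) together with $x^ny^m$ are pairwise distinct, the desired equality reduces to the two scalar identities
$$\g_{n,m-l}=\b_{n-r,m-l}\,\frac{A_{n+m-l,l}}{A_{n,m}},\qquad \g_{n,m-l-1}\,\g_{n+m-1}=\b_{n-r,m-l-1}\,\b_{n+m-1-r}\,\frac{A_{n+m-l-1,l}}{A_{n,m}}$$
for $0\le l<m$. Both are immediate from the factorization $\g_{n,t}=\prod_{i=0}^{t-1}\a_{n+i}^{-1}\b_{n+i-r}=A_{n,t}^{-1}\b_{n-r,t}$ and the telescoping identities $A_{n+m-l,l}/A_{n,m}=A_{n,m-l}^{-1}$ and $A_{n+m-l-1,l}/A_{n,m}=A_{n,m-l-1}^{-1}\a_{n+m-1}^{-1}$. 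Finally, taking $r=0$ (where $\theta_0={\rm id}$ and $\b[0]=\b$) gives the special case $\phi_{\a}\phi^{(1)}_{\b}\phi_{\a}^{-1}=\phi^{(1)}_{\a^{-1}\b}$.

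The computation is routine; the only delicate point is the index bookkeeping in the last step, namely keeping straight how the shift by $r$ in $\b[-r]$ interacts with the partial products $\b_{N,t}$ and $A_{n,t}$, and how the ratios of $\a$-products telescope. I would also point out why a shortcut via $f_1$ is not available: one does have $(\phi_{\a}\theta_r)\phi^{(1)}_{\b}(\phi_{\a}\theta_r)^{-1}\in Aut_*(H)$ by the normality in Lemma~\ref{Semidirect}, and the $m=1$ instance of the above computation already shows its image under $f_1$ equals $\a^{-1}\b[-r]$; but since the assignment $\b\mapsto\phi^{(1)}_{\b}$ is not a group homomorphism, this alone does not pin down the conjugate, so the verification over the whole basis is needed.
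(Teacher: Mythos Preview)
Your proof is correct and follows essentially the same approach as the paper: compute the conjugate $(\phi_{\a}\theta_r)\phi^{(1)}_{\b}(\phi_{\a}\theta_r)^{-1}$ on each basis element $x^ny^m$ using the explicit formula for $\phi^{(1)}_{\b}$ from Proposition~\ref{Iso1}, and reduce the comparison with $\phi^{(1)}_{\a^{-1}\b[-r]}(x^ny^m)$ to the two scalar identities coming from telescoping the products $A_{n,t}$ (the paper's $\a_{n,t}$). The only differences are cosmetic---your notation $A_{n,m}$ for $\a_{n,m}$ and your explicit spelling-out of the telescoping---together with your extra closing remark on why the shortcut via $f_1$ alone is insufficient, which is a nice observation not in the paper.
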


\begin{proof}
For any $n\in\mathbb Z$ and $0\<l\<m$, define $a_{n,m,l}$ and $b_{n,m,l}$ in $H_0$ by $a_{n,m,m}=b_{n,m,m}=x^n$ and
\begin{align*}
  a_{n,m,l} =& (m,m-l)_q (\b_{n,m-l}x^{n+m-l}-\b_{n,m-l-1}\b_{n+m-1}x^{n+m-l-1}) \\
  b_{n,m,l} =&(m,m-l)_q ((\a^{-1}\b[-r])_{n,m-l}x^{n+m-l}\\
  &-(\a^{-1}\b[-r])_{n,m-l-1}(\a^{-1}\b[-r])_{n+m-1}x^{n+m-l-1})
\end{align*}
for $0\<l<m$. Then by the proof of Proposition \ref{Iso1}, we have
$$\phi^{(1)}_{\b}(x^ny^m)=\sum\limits_{l=0}^ma_{n,m,l}y^l\quad\mbox{ and }\quad
\phi^{(1)}_{\a^{-1}\b[-r]}(x^ny^m)=\sum\limits_{l=0}^mb_{n,m,l}y^l$$
for any $n\in\mathbb Z$ and $m\>0$. By a straightforward verification, one can check that
$$\a^{-1}_{n,m}\a_{n+m-l, l}\b_{n-r, m-l}=(\a^{-1}\b[-r])_{n,m-l}$$
and
$$\a^{-1}_{n,m}\a_{n+m-l-1, l}\b_{n-r, m-l-1}\b_{n-r+m-1}=(\a^{-1}\b[-r])_{n,m-l-1}(\a^{-1}\b[-r])_{n+m-1}$$
for all $n\in\mathbb Z$ and $0\<l<m$. Hence we have
$$\begin{array}{rcl}
\a^{-1}_{n,m}\phi_{\a}\theta_r(\b_{n-r, m-l}x^{n-r+m-l}y^l)
&=&\a^{-1}_{n,m}\phi_{\a}(\b_{n-r, m-l}x^{n+m-l}y^l)\\
&=&\a^{-1}_{n,m}\a_{n+m-l, l}\b_{n-r, m-l}x^{n+m-l}y^l\\
&=&(\a^{-1}\b[-r])_{n,m-l}x^{n+m-l}y^l\\
\end{array}$$
and
$$\begin{array}{cl}
&\a^{-1}_{n,m}\phi_{\a}\theta_r(\b_{n-r, m-l-1}\b_{n-r+m-1}x^{n-r+m-l-1}y^l)\\
=&\a^{-1}_{n,m}\phi_{\a}(\b_{n-r, m-l-1}\b_{n-r+m-1}x^{n+m-l-1}y^l)\\
=&\a^{-1}_{n,m}\a_{n+m-l-1,l}\b_{n-r, m-l-1}\b_{n-r+m-1}x^{n+m-l-1}y^l\\
=&(\a^{-1}\b[-r])_{n,m-l-1}(\a^{-1}\b[-r])_{n+m-1}x^{n+m-l-1}y^l,\\
\end{array}$$
and consequently, $\a^{-1}_{n,m}\phi_{\a}\theta_r(a_{n-r, m, l}y^l)=b_{n,m,l}y^l$
for all $n\in\mathbb Z$ and $0\<l<m$. It follows that
$$\begin{array}{cl}
&(\phi_{\a}\theta_r)\phi^{(1)}_{\b}(\phi_{\a}\theta_r)^{-1}(x^ny^m)\\
=&\phi_{\a}\theta_r\phi^{(1)}_{\b}\theta_r^{-1}(\a_{n, m}^{-1}x^ny^m)\\
=&\phi_{\a}\theta_r\phi^{(1)}_{\b}(\a_{n, m}^{-1}x^{n-r}y^m)\\
=&\a_{n, m}^{-1}\phi_{\a}\theta_r(\sum\limits_{l=0}^ma_{n-r, m, l}y^l)\\
=&\a_{n, m}^{-1}\phi_{\a}\theta_r(x^{n-r}y^m)+
\a_{n, m}^{-1}\phi_{\a}\theta_r(\sum\limits_{0\<l<m}a_{n-r, m, l}y^l)\\
=&x^{n}y^m+\sum\limits_{0\<l<m}b_{n, m, l}y^l\\
=&\phi^{(1)}_{\a^{-1}\b[-r]}(x^ny^m)
\end{array}$$
for $n\in\mathbb Z$ and $m\>0$. This completes the proof.
\end{proof}

\begin{lemma}\label{actions}
Let $\a=(\a_n)_{n\in\mathbb Z}\in(k^{\times})^{\mathbb Z}$, $\b=(\b_n)_{n\in\mathbb Z}\in k^{\mathbb Z}$,
$r\in\mathbb Z$ and $s>1$. Then
$$(\phi_{\a}\theta_r)\phi^{(s)}_{\b}(\phi_{\a}\theta_r)^{-1}=\phi^{(s)}_{\a^{-1}\a^{-1}[1]\cdots\a^{-1}[s-1]\b[-r]}$$
In particular, $\phi_{\a}\phi^{(s)}_{\b}\phi_{\a}^{-1}=\phi^{(s)}_{\a^{-1}\a^{-1}[1]\cdots\a^{-1}[s-1]\b}$.
\end{lemma}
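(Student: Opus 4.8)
The plan is to argue exactly as in the proof of Lemma~\ref{action1}: two linear endomorphisms of $H$ that agree on the $k$-basis $\{x^ny^m\mid n\in\mathbb Z,\ m\in\mathbb N\}$ coincide, so it suffices to evaluate both sides of the asserted identity on each $x^ny^m$. Put $\g:=\a^{-1}\a^{-1}[1]\cdots\a^{-1}[s-1]\b[-r]$; then $\g\in k^{\mathbb Z}$ and $\g_n=\a_{n,s}^{-1}\b_{n-r}$ for all $n$, where I use the notations $\mu_{n,m}=\prod_{i=0}^{m-1}\mu_{n+i}$ and $\mu_{n,s;i}=\prod_{j=0}^{i-1}\mu_{n+js}$ of \seref{3}. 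By Proposition~\ref{Isos} the right-hand side $\phi^{(s)}_{\g}$ is a well-defined coalgebra automorphism, while the left-hand side is a composite of coalgebra automorphisms.

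First I would compute $(\phi_{\a}\theta_r)^{-1}(x^ny^m)=\a_{n,m}^{-1}x^{n-r}y^m$, using $\phi_{\a}\theta_r(x^ny^m)=\a_{n+r,m}x^{n+r}y^m$. Substituting the explicit formula of Remark~\ref{formula} for $\phi^{(s)}_{\b}$ and then applying $\phi_{\a}\theta_r$ term by term --- noting that $\phi_{\a}\theta_r$ carries $x^{n-r+is}y^{m-is}$ to $\a_{n+is,m-is}x^{n+is}y^{m-is}$ and $x^{n-r+is-s}y^{m-is}$ to $\a_{n+is-s,m-is}x^{n+is-s}y^{m-is}$ --- I get that both sides equal $x^ny^m$ when $0\<m<s$, while for $m\>s$,
$$(\phi_{\a}\theta_r)\phi^{(s)}_{\b}(\phi_{\a}\theta_r)^{-1}(x^ny^m)=x^ny^m+\sum_{1\<i\<[\frac{m}{s}]}\binom{m}{s}_{q,i}\a_{n,m}^{-1}\big(\a_{n+is,m-is}\b_{n-r,s;i}x^{n+is}-\a_{n+is-s,m-is}\b_{n-r,s;i-1}\b_{n-r+m-s}x^{n+is-s}\big)y^{m-is}.$$

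Comparing the (pairwise distinct) monomials here with those of $\phi^{(s)}_{\g}(x^ny^m)$, again read off from Remark~\ref{formula}, the claim reduces to the scalar identities $\a_{n,m}^{-1}\a_{n+is,m-is}=\a_{n,is}^{-1}$ and $\a_{n,m}^{-1}\a_{n+is-s,m-is}=\a_{n,(i-1)s}^{-1}\a_{n+m-s,s}^{-1}$, together with $\g_{n,s;i}=\a_{n,is}^{-1}\b_{n-r,s;i}$. The first two are instances of the ``consecutive product'' relations $\a_{n,is}\a_{n+is,m-is}=\a_{n,m}$ and $\a_{n,(i-1)s}\a_{n+(i-1)s,m-is}\a_{n+m-s,s}=\a_{n,m}$, and the last follows from $\g_n=\a_{n,s}^{-1}\b_{n-r}$ together with the telescoping $\prod_{j=0}^{i-1}\a_{n+js,s}=\a_{n,is}$. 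Substituting these back identifies the coefficient of $x^{n+is}y^{m-is}$ above as $\binom{m}{s}_{q,i}\g_{n,s;i}$ and that of $x^{n+is-s}y^{m-is}$ as $-\binom{m}{s}_{q,i}\g_{n,s;i-1}\g_{n+m-s}$, which is precisely $\phi^{(s)}_{\g}(x^ny^m)$; the ``in particular'' statement is the case $r=0$ ($\theta_r={\rm id}$). I do not expect a conceptual obstacle: this is the block-length-$s$ counterpart of Lemma~\ref{action1}, and the only delicate point is keeping the indices in the telescoping $\a$-products correctly grouped into blocks of length $s$, which is where a slip is most likely.
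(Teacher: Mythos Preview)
Your proposal is correct and follows exactly the approach the paper indicates: it carries out the analogue of the explicit computation in Lemma~\ref{action1}, replacing the formulas from the proof of Proposition~\ref{Iso1} by those of Proposition~\ref{Isos} and Remark~\ref{formula}. The scalar identities you isolate (the consecutive-product relations for $\a_{n,m}$ and the telescoping $\prod_{j=0}^{i-1}\a_{n+js,s}=\a_{n,is}$ giving $\g_{n,s;i}=\a_{n,is}^{-1}\b_{n-r,s;i}$) are precisely the block-length-$s$ counterparts of the verifications in Lemma~\ref{action1}, and your checks go through.
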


\begin{proof}
It is similar to Lemma \ref{action1} by using Proposition \ref{Isos} and Remark \ref{formula}.
\end{proof}

For any $\a\in(k^{\times})^{\mathbb Z}$ (or $k^{\mathbb Z}$) and $i\>1$, define $\a\langle i\rangle$ by $\a\langle 1\rangle=\a$
and $\a\langle i\rangle=\a\a[1]\cdots\a[i-1]$ for $i>1$. Then $(\a\b)\langle i\rangle=\a\langle i\rangle\b\langle i\rangle$
and $\a[r]\langle i\rangle=\a\langle i\rangle[r]$ for all $\a, \b\in(k^{\times})^{\mathbb Z}$ (or $k^{\mathbb Z}$),
$r\in\mathbb Z$ and $i\>1$.

For any $(\a, r)\in(k^{\times})^{\mathbb Z}\rtimes\mathbb{Z}$
and $(\b^{(i)})_{i\in I}\in G_{\infty}$, define $(\a, r)\cdot(\b^{(i)})_{i\in I}\in G_{\infty}$ by
$$(\a, r)\cdot(\b^{(i)})_{i\in I}=(\a^{-1}\langle i\rangle\b^{(i)}[-r])_{i\in I}
=(\a^{-1}\langle 1\rangle\b^{(1)}[-r], \a^{-1}\langle 2\rangle\b^{(2)}[-r], \cdots).$$
Then one can check that this gives rise to a group action of
$(k^{\times})^{\mathbb Z}\rtimes\mathbb{Z}$ on $G_{\infty}$. Hence one can form a semidirect product
group $G_{\infty}\rtimes((k^{\times})^{\mathbb Z}\rtimes\mathbb{Z})$.

Since $(k^{\times})^{\mathbb Z}$ can be embedded
into $(k^{\times})^{\mathbb Z}\rtimes\mathbb{Z}$ as a (normal) subgroup, there is a group action of
$(k^{\times})^{\mathbb Z}$ on $G_{\infty}$ given by
$$\a\cdot(\b^{(i)})_{i\in I}=(\a^{-1}\langle i\rangle\b^{(i)})_{i\in I}
=(\a^{-1}\langle 1\rangle\b^{(1)}, \a^{-1}\langle 2\rangle\b^{(2)}, \cdots),$$
where $\a\in (k^{\times})^{\mathbb Z}$ and $(\b^{(i)})_{i\in I}\in G_{\infty}$. Thus, one can form another
semidirect product group $G_{\infty}\rtimes(k^{\times})^{\mathbb Z}$. Now we can state the main theorem of this section.

\begin{theorem}
There are group isomorphisms $Aut_c(H)\cong G_{\infty}\rtimes((k^{\times})^{\mathbb Z}\rtimes\mathbb{Z})$
and $Aut_0(H)\cong G_{\infty}\rtimes(k^{\times})^{\mathbb Z}$. Explicitly, the group isomorphisms are given by
$$G_{\infty}\rtimes((k^{\times})^{\mathbb Z}\rtimes\mathbb{Z})\ra Aut_c(H),\quad
(g, (\a, r))\mapsto \Phi(g)\phi_{\a}\theta_r$$
and
$$G_{\infty}\rtimes(k^{\times})^{\mathbb Z}\ra Aut_0(H),\quad
(g, \a)\mapsto \Phi(g)\phi_{\a},$$
where $\Phi$ is the group isomorphism from $G_{\infty}$ to $Aut_*(H)$ given before.
\end{theorem}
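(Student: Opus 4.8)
The plan is to assemble the isomorphism from the internal semidirect product decomposition of Lemma~\ref{Semidirect} together with the structure theorems already proved for the two factors. Since $Aut_c(H)$ is the internal semidirect product of $Aut_*(H)$ and $Aut^{gr}_c(H)$, and since $\Phi\colon G_\infty\to Aut_*(H)$ (Theorem~\ref{Iso*}) and $\Psi\colon(k^\times)^{\mathbb Z}\rtimes\mathbb Z\to Aut^{gr}_c(H)$, $\Psi(\a,r)=\phi_\a\theta_r$ (Theorem~\ref{iso-gr}), are group isomorphisms, I would first invoke the standard fact that an internal semidirect product $G=NQ$ (with $N\triangleleft G$, $N\cap Q=\{1\}$), together with isomorphisms $u\colon N'\to N$ and $v\colon Q'\to Q$, yields a group isomorphism $N'\rtimes_\alpha Q'\to G$, $(n',q')\mapsto u(n')v(q')$, where $\alpha_{q'}(n')=u^{-1}\bigl(v(q')\,u(n')\,v(q')^{-1}\bigr)$. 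Applying this with $N=Aut_*(H)$, $Q=Aut^{gr}_c(H)$, $u=\Phi$, $v=\Psi$ produces an isomorphism $G_\infty\rtimes_\alpha((k^\times)^{\mathbb Z}\rtimes\mathbb Z)\xrightarrow{\ \sim\ }Aut_c(H)$, $(g,(\a,r))\mapsto\Phi(g)\phi_\a\theta_r$, so the only remaining task is to identify the transported action $\alpha$ with the action $(\a,r)\cdot(\b^{(i)})_{i\in I}=(\a^{-1}\langle i\rangle\b^{(i)}[-r])_{i\in I}$ defined just before the statement.

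For that identification, fix $g=(\b^{(i)})_{i\in I}\in G_\infty$ and $(\a,r)\in(k^\times)^{\mathbb Z}\rtimes\mathbb Z$. Because $\phi^{(s)}_{\b^{(s)}}\in Aut_{s-1}(H)$ for $s>1$ and $\phi^{(1)}_{\b^{(1)}}\in Aut_*(H)$, the elements $\phi^{(s)}_{\b^{(s)}}$ with $s>m'$ act as the identity on $\sum_{i=0}^{m'}H(i)$; hence the finite composite $\phi^{(1)}_{\b^{(1)}}\cdots\phi^{(m)}_{\b^{(m)}}$ agrees with $\Phi(g)$ on $\sum_{i=0}^{m}H(i)$ for every $m$. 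Both $\phi_\a$ and $\theta_r$ preserve the grading of $H$ (indeed $\theta_r(x^ny^m)=x^{n+r}y^m\in H(m)$ and $\phi_\a(x^ny^m)\in H(m)$), so $\phi_\a\theta_r\in Aut^{gr}_c(H)$ stabilizes each $\sum_{i=0}^m H(i)$; consequently $(\phi_\a\theta_r)\Phi(g)(\phi_\a\theta_r)^{-1}$ agrees on $\sum_{i=0}^m H(i)$ with $\prod_{s=1}^m(\phi_\a\theta_r)\phi^{(s)}_{\b^{(s)}}(\phi_\a\theta_r)^{-1}$. By Lemma~\ref{action1} the $s=1$ factor is $\phi^{(1)}_{\a^{-1}\b^{(1)}[-r]}=\phi^{(1)}_{\a^{-1}\langle 1\rangle\b^{(1)}[-r]}$, and by Lemma~\ref{actions} the $s>1$ factor is $\phi^{(s)}_{\a^{-1}\a^{-1}[1]\cdots\a^{-1}[s-1]\b^{(s)}[-r]}=\phi^{(s)}_{\a^{-1}\langle s\rangle\b^{(s)}[-r]}$, where $\a^{-1}\langle s\rangle=(\a\langle s\rangle)^{-1}$ follows from the identity $(\a\g)\langle i\rangle=\a\langle i\rangle\g\langle i\rangle$. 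Since $Aut_{s-1}(H)$ is normal (Lemma~\ref{serial}), these conjugates again lie in the appropriate $Aut_{s-1}(H)$, so $\prod_{s=1}^m\phi^{(s)}_{\a^{-1}\langle s\rangle\b^{(s)}[-r]}$ agrees on $\sum_{i=0}^m H(i)$ with $\Phi\bigl((\a^{-1}\langle i\rangle\b^{(i)}[-r])_{i\in I}\bigr)$. Letting $m\to\infty$ gives $(\phi_\a\theta_r)\Phi(g)(\phi_\a\theta_r)^{-1}=\Phi\bigl((\a,r)\cdot g\bigr)$, i.e. $\alpha_{(\a,r)}(g)=(\a,r)\cdot g$. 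This simultaneously confirms that $(\a,r)\cdot(-)$ really is a group action (it is transported conjugation) and finishes the proof that $(g,(\a,r))\mapsto\Phi(g)\phi_\a\theta_r$ is an isomorphism.

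The statement for $Aut_0(H)$ is obtained by the same argument restricted to $r=0$: by Lemma~\ref{Semidirect}, $Aut_0(H)$ is the internal semidirect product of $Aut_*(H)$ and $Aut^{gr}_0(H)$, and $Aut^{gr}_0(H)\cong(k^\times)^{\mathbb Z}$ via $\a\mapsto\phi_\a$ (Theorem~\ref{iso-gr-0}), which is the $r=0$ restriction of $\Psi$. Using only the ``in particular'' clauses $\phi_\a\phi^{(s)}_{\b}\phi_\a^{-1}=\phi^{(s)}_{\a^{-1}\langle s\rangle\b}$ of Lemmas~\ref{action1} and~\ref{actions}, the transported conjugation action of $(k^\times)^{\mathbb Z}$ on $G_\infty$ is $\a\cdot(\b^{(i)})_{i\in I}=(\a^{-1}\langle i\rangle\b^{(i)})_{i\in I}$, and $(g,\a)\mapsto\Phi(g)\phi_\a$ is the desired isomorphism $G_\infty\rtimes(k^\times)^{\mathbb Z}\xrightarrow{\ \sim\ }Aut_0(H)$. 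The group-theoretic skeleton (semidirect products and transported actions) is entirely routine; the one place requiring care is the bookkeeping in the second paragraph, namely combining the grading-stability of $\phi_\a\theta_r$ with the fact that $\Phi(g)$ is specified only through its finite composite restrictions, and keeping track of the shift-and-product exponents $\langle i\rangle$ and $[-r]$ delivered by Lemmas~\ref{action1} and~\ref{actions}.
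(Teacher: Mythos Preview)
Your proof is correct and follows exactly the approach the paper intends: the paper's own proof is a one-line citation of Lemma~\ref{Semidirect}, Theorems~\ref{iso-gr-0}, \ref{iso-gr}, \ref{Iso*}, and Lemmas~\ref{action1}, \ref{actions}, and you have carefully unpacked what that citation means, in particular verifying that the transported conjugation action of $Aut^{gr}_c(H)$ on $Aut_*(H)$ matches the action $(\a,r)\cdot(\b^{(i)})_{i\in I}=(\a^{-1}\langle i\rangle\b^{(i)}[-r])_{i\in I}$ defined just before the theorem. Your passage from the finite composites to $\Phi(g)$ via the grading-preservation of $\phi_\a\theta_r$ is the right way to make this precise.
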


\begin{proof}
It follows from Lemma \ref{Semidirect}, Theorems \ref{iso-gr-0}, \ref{iso-gr} and
\ref{Iso*}, and Lemmas \ref{action1} and \ref{actions}.
\end{proof}

\begin{remark}
For the hypothesis that $q$ is not a root of unity, we only need the condition
that $\left(
        \begin{array}{c}
          n \\
          i \\
        \end{array}
      \right)_q\neq 0$ for all $0\<i\<n$. Thus, if ${\rm char}(k)=0$ and $q=1$, then all the arguments
in the paper are still valid. Hence all the results of the paper hold too in this case.
\end{remark}

\section*{ACKNOWLEDGMENTS}
\hskip\parindent

This work is supported by NSF of China (No. 11171291).\\

\end{document}